\let\save@mathaccent\mathaccent
\newcommand*\if@single[3]{%
  \setbox0\hbox{${\mathaccent"0362{#1}}^H$}%
  \setbox2\hbox{${\mathaccent"0362{\kern0pt#1}}^H$}%
  \ifdim\ht0=\ht2 #3\else #2\fi
  }
\newcommand*\rel@kern[1]{\kern#1\dimexpr\macc@kerna}
\newcommand*\widebar[1]{\@ifnextchar^{{\wide@bar{#1}{0}}}{\wide@bar{#1}{1}}}
\newcommand*\wide@bar[2]{\if@single{#1}{\wide@bar@{#1}{#2}{1}}{\wide@bar@{#1}{#2}{2}}}
\newcommand*\wide@bar@[3]{%
  \begingroup
  \def\mathaccent##1##2{%
    \let\mathaccent\save@mathaccent
    \if#32 \let\macc@nucleus\first@char \fi
    \setbox\z@\hbox{$\macc@style{\macc@nucleus}_{}$}%
    \setbox\tw@\hbox{$\macc@style{\macc@nucleus}{}_{}$}%
    \dimen@\wd\tw@
    \advance\dimen@-\wd\z@
    \divide\dimen@ 3
    \@tempdima\wd\tw@
    \advance\@tempdima-\scriptspace
    \divide\@tempdima 10
    \advance\dimen@-\@tempdima
    \ifdim\dimen@>\z@ \dimen@0pt\fi
    \rel@kern{0.6}\kern-\dimen@
    \if#31
      \overline{\rel@kern{-0.6}\kern\dimen@\macc@nucleus\rel@kern{0.4}\kern\dimen@}%
      \advance\dimen@0.4\dimexpr\macc@kerna
      \let\final@kern#2%
      \ifdim\dimen@<\z@ \let\final@kern1\fi
      \if\final@kern1 \kern-\dimen@\fi
    \else
      \overline{\rel@kern{-0.6}\kern\dimen@#1}%
    \fi
  }%
  \macc@depth\@ne
  \let\math@bgroup\@empty \let\math@egroup\macc@set@skewchar
  \mathsurround\z@ \frozen@everymath{\mathgroup\macc@group\relax}%
  \macc@set@skewchar\relax
  \let\mathaccentV\macc@nested@a
  \if#31
    \macc@nested@a\relax111{#1}%
  \else
    \def\gobble@till@marker##1\endmarker{}%
    \futurelet\first@char\gobble@till@marker#1\endmarker
    \ifcat\noexpand\first@char A\else
      \def\first@char{}%
    \fi
    \macc@nested@a\relax111{\first@char}%
    \fi
  \endgroup
}
\theoremstyle{plain}
\newtheorem{thm}{Theorem}
\theoremstyle{plain}
\newtheorem{prop}[thm]{Proposition}
\theoremstyle{plain}
\theoremstyle{plain}
\newtheorem{rmk}[thm]{Remark}
\theoremstyle{plain}
\theoremstyle{plain}
\newtheorem{lemm}[thm]{Lemma}
\theoremstyle{plain}
\newtheorem{hypo}{Assumption}
\theoremstyle{plain}
\newtheorem{expl}{Example}
\DeclareMathOperator{\e}{e}
\newcommand{\varx}{x}
\newcommand{\vary}{y}
\newcommand{\vart}{t}
\newcommand{\fun}{u}
\newcommand{\Alin}{A_\mathrm{L}}
\newcommand{\Cdiff}{\mathfrak{C}}
\newcommand{\CFP}{\mathscr{C}}
\newcommand{\Ccluster}{\widehat{C}}
\renewcommand{\le}{\leqslant}
\renewcommand{\ge}{\geqslant}
\renewcommand{\leq}{\leqslant}
\renewcommand{\geq}{\geqslant}
\begin{document}

\title{A mathematical justification of the finite time approximation of Becker--D\"oring equations by a Fokker--Planck dynamics}

\author{Gabriel Stoltz$^1$ and Pierre Terrier$^{1,2}$\\
\small $^1$ Université Paris-Est, CERMICS (ENPC), Inria, F-77455 Marne-la-Vallée, France \\
\small $^2$ CEA, DEN, Service de Recherches de Métallurgie Physique, UPSay, F-91191 Gif-sur-Yvette, France \\
}

\date{\today}

\maketitle


\abstract{
  The Becker--D\"oring equations are an infinite dimensional system of ordinary differntial equations describing coagulation/fragmentation processes of species of integer sizes. Formal Taylor expansions motivate that its solution should be well described by a partial differential equation for large sizes, of advection-diffusion type, called Fokker--Planck equation. We rigorously prove the link between these two descriptions for evolutions on finite times rather than in some hydrodynamic limit, motivated by the results of numerical simulations and the construction of dedicated algorithms based on splitting strategies. In fact, the Becker--D\"oring equations and the Fokker--Planck equation are related through some pure diffusion with unbounded diffusion coefficient. The crucial point in the analysis is to obtain decay estimates for the solution of this pure diffusion and its derivates to control remainders in the Taylor expansions. The small parameter in this analysis is the inverse of the minimal size of the species.
}

\section{Introduction}

Simulating the ageing of materials over a long period of time remains a challenge in the materials science community. Purely atomistic approaches, such as molecular dynamics or kinetic Monte Carlo~\cite{bortz1975new,ising1925beitrag,soisson2010atomistic,voter2007introduction,young1966monte} do not allow to reach times as long as years of ageing. To achieve this goal, mean-field models have been developed. One model, called Cluster Dynamics, has been considered in the community of nuclear materials~\cite{ghoniem1980numerical,barbu2007cluster,jourdan2014efficient} in order to study the evolution of defects under irradiation. In fact, this model coincides with the celebrated Becker--D\"oring (BD) equations, first proposed in~\cite{BD35} and then modified in~\cite{PL79}, which allow to simulate coagulation/fragmentation processes in various fields, including biology. It consists in simulating the evolution of concentration of various species, here clusters of defects such as vacancies or other self defects, solute gas, etc. From a mathematical viewpoint, BD is an infinite set of ordinary differential equations (ODEs), one for each type of defect. We focus in this work on a simple but paradigmatic example of a single species coagulation/fragmentation process, which corresponds for instance to vacancy clustering in materials science.

Let us first recall the BD equations, denoting by $C_n$ the concentration of clusters composed of $n$ vacancies. The evolution of $C_n$ is given by 
\begin{equation}
\label{eq:EDO-Vacancy}
\frac{dC_n}{dt} = \beta_{n-1}C_{n-1}C_1 - (\beta_n C_1 + \alpha_n) C_n + \alpha_{n+1}C_{n+1},
\end{equation}
where $\alpha_n$ and $\beta_n$ are respectively called emission and absorption coefficients, while $C_1$ represents the concentration of clusters composed of only one vacancy. Equation~\eqref{eq:EDO-Vacancy} describes a simple process where clusters of defects can either emit or absorb a single vacancy. More precisely, the term $\beta_{n-1} C_{n-1} C_1$ describes the increase in the population of clusters of size $n$ coming from clusters of size $n-1$ absorbing a single vacancy, while the term $\alpha_{n+1} C_{n+1}$ describes the increase in the population of clusters of size $n$ coming from clusters of size $n+1$ emitting a single vacancy. Finally, the term $ - (\beta_n C_1 + \alpha_n) C_n $ encodes the rate of decrease of clusters of size $n$ arising from their transformation into clusters of sizes $n-1$ or $n+1$. Single vacancies are considered as mobile clusters and their evolution is related to the evolution of all other clusters as follows:
\begin{equation}
\label{eq:EDO-Mono}
\frac{dC_1}{dt} = -2\beta_1 C_1^2 - \sum_{n\ge 2} \beta_n C_n C_1 + \sum_{n\ge 2} \alpha_n C_n + \alpha_2 C_2.
\end{equation}
The latter equation is determined by the requirement that the total quantity of matter is conserved, namely
\begin{equation}
\label{eq:conservation-law}
\frac{d}{dt}{\left(C_1 + \sum_{n\ge 2} n C_n \right)} = 0.
\end{equation}
The reasons such a model is a simplification of complex phenomena occurring in real materials are twofold. First, mobile clusters can be of size greater than one. Equation~\eqref{eq:EDO-Vacancy} can be enriched with terms describing the absorption or emission of clusters of sizes $m \ge 1$, with equations similar to~\eqref{eq:EDO-Mono} describing the evolution of concentrations of sizes $m\ge 1$. Second, clusters can be made of different types of defects, \textit{e.g.} vacancies and helium atoms in iron. Therefore, defect concentrations are in general indexed by $k$-tuples, where $k$ is the number of types of defects. 

Various properties of the BD equations~\eqref{eq:EDO-Mono}-\eqref{eq:conservation-law} are reviewed in~\cite{Slemrod00,HY17}. The study of the well-posedness of the BD equations was initiated in~\cite{ball1986becker}, with uniqueness results refined in~\cite{Laurencot2002becker}. Many works, starting with~\cite{ball1986becker}, then adressed the longtime behavior of BD, which depends on whether the total mass is below some treshold value, in which case precise rates of convergence to a steady-state solution with fixed mass can be obtained (see~\cite{JN03} and subsequent works as reviewed in~\cite{HY17}); or above the treshold value, in which case some mass is lost in the steady-state~\cite{Slemrod89}, a signature of some intriguing phase transition. Another viewpoint on the longtime behavior of BD is to obtain some average description of the dynamics under the form of a partial differential equation (PDE) under a suitable space-time scaling corresponding to some hydrodynamic limit, as initiated in~\cite{Penrose97} and pursued in many works, including~\cite{Laurencot2002becker,CGPV02,niethammer03,niethammer04}. The limiting PDE is a nonlinear transport equation, the so-called Lifshitz--Slyozov equation, or a variant of it. This PDE describes the longtime behavior of large clusters, and is therefore a model relevant for the coalescence regime.

Our concern here is rather on a non-asymptotic phase of the process, where nucleation and growth are still important, and where the concentration of small defects remains high because of the irradiation. This regime is not well described by the Lifshitz--Slyozov equation, but rather by Fokker--Planck type equations, which can be seen as nonlinear transport equations supplemented by a diffusion term. Fokker--Planck equations related to BD were first presented in~\cite{goodrich1964nucleation} and are still used and developed in more recent works in the materials science community~\cite{jourdan2014efficient,jourdan2016accurate,terrier2017coupling}. They have also been considered in mathematical studies, where the diffusion is obtained as a higher order correction term in scaling limits~\cite{Velazquez98,HC99,CGPV02}. Note however that the diffusion part typically vanishes in the longtime/large scale limit in these models. 

Our motivation for studying Fokker--Planck type approximations at finite times rather than large times comes from numerical considerations. A numerical approach to solve BD is to fix the maximal size $N_\mathrm{max}$ of the clusters in the simulation, which amounts to solving a system of $N_\mathrm{max}$ ODEs. In practice, the ODE system is stiff so that dedicated solvers are required, see~\cite{CDW95}, as well as~\cite{jourdan2014efficient} for the applications we have in mind. However, it can become computationally impossible to solve the ODEs. For example, a system with several types of defects such as vacancies (V) and helium atoms (He), might contain up to~$N_\mathrm{max} = N_\mathrm{max,V}  N_\mathrm{max,He} = 10^{6} \times 10^5$ equations. This motivated the development of various approximations, for instance based on some coarse-graining procedure~\cite{CDW95,kiritani1973analysis,golubov2001grouping}. An alternative strategy consists in coupling the EDO system with a macroscopic description of the materials in terms of a PDE, formally obtained by a second order discretization based on the assumption that $C_n(t) \simeq \mathscr{C}(t,n)$ for some smooth function~$\mathscr{C}$ (we recall the heuristic derivation of this equation in Section~\ref{sec:standard-approx}):
\begin{equation}
\label{eq:Fokker--Planck}
\frac{\partial \CFP}{\partial t} = - \frac{\partial (F\CFP)}{\partial x} + \frac{1}{2}\frac{\partial^2 (D\CFP)}{\partial x^2}.
\end{equation}
The Fokker--Planck equation~\eqref{eq:Fokker--Planck} is characterized by the drift $F$ and the diffusion $D$, both coefficients depending on the coefficients of absorption and emission $\beta$ and $\alpha$.

\begin{figure}
\centering
\includegraphics[width=0.5\textwidth]{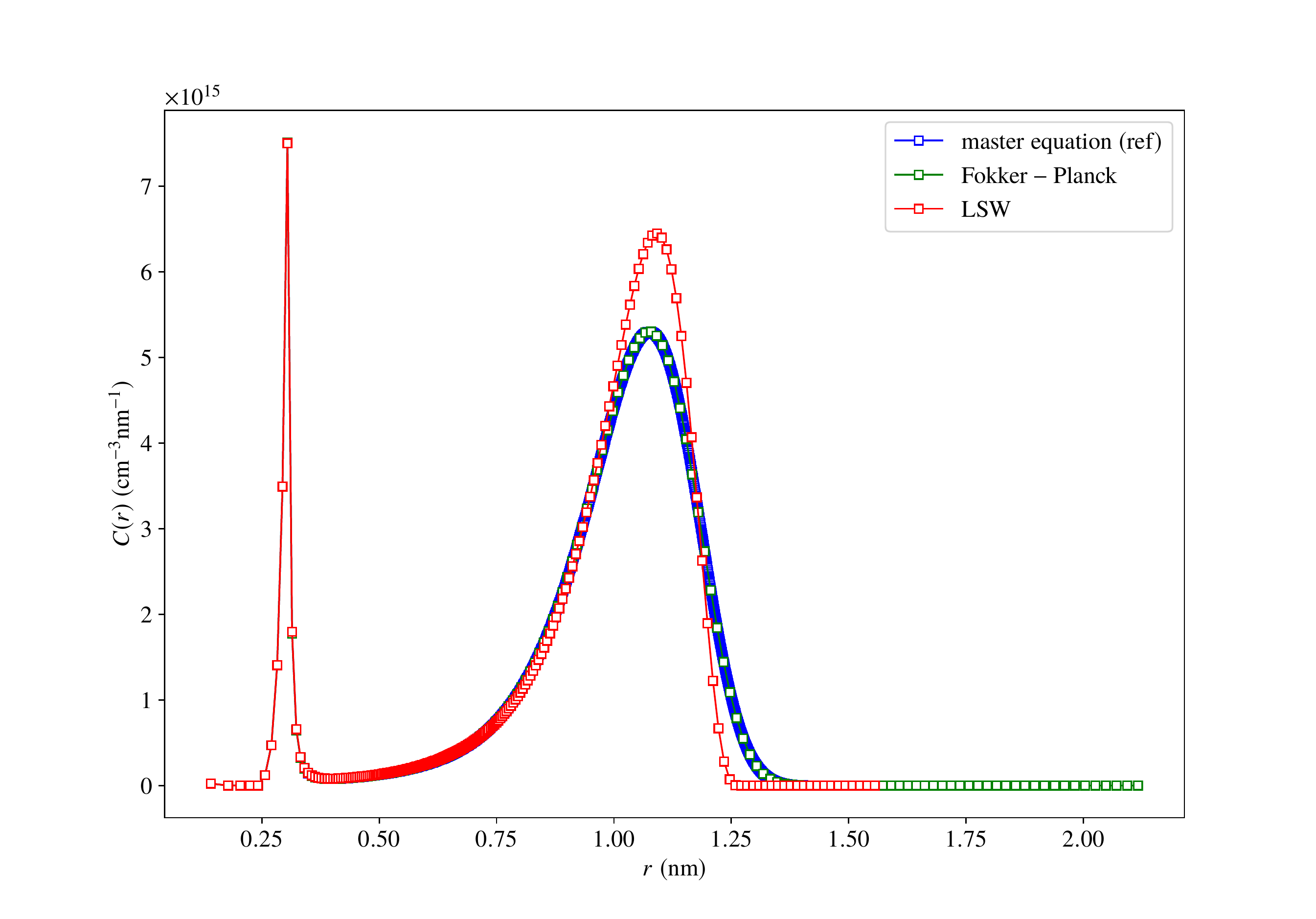}
\caption{Comparison of the growth of vacancy clusters under irradiation, at time $t = 10^5$ s, of the solutions of BD equations \eqref{eq:EDO-Vacancy}-\eqref{eq:EDO-Mono} ('master equation'), the Fokker--Planck dynamics~\eqref{eq:Fokker--Planck} and the Lifshitz-Slyozov like dynamics LSW (\emph{i.e.} \eqref{eq:Fokker--Planck} with $D=0$). See~\cite{Terrier} for details on the simulations.
}
\label{fig:comp-LSW-FP}
\end{figure}

While the approximation~\eqref{eq:Fokker--Planck} gives accurate results in practice at finite times for large cluster sizes (when compared to the solution of the full ODE system), the consistency of this approach has never been rigorously proven, and the approximation error never been quantified in function of the minimal size of the clusters for which it is used. Let us emphasize that, as demonstrated in Figure~\ref{fig:comp-LSW-FP}, it is crucial to include the diffusion term in the PDE approximating BD at finite times, as highlighted by the fact the solution of~\eqref{eq:Fokker--Planck} with $D=0$ does not capture correctly the solution of the ODE system. The numerical results reported in this picture also show that the Fokker--Planck approximation can be valid for very small cluster sizes.

A difficulty in making the heuristic argument of~\cite{goodrich1964nucleation} leading to~\eqref{eq:Fokker--Planck} rigorous is that the approximation on which the derivation relies is based on a Taylor expansion of order 2 for a mesh with fixed spacing 1. This is in contrast with the usual approximations in the mathematics literature which rely on some space-time scaling naturally leading to some continuum limit as in~\cite{Velazquez98,HC99,CGPV02}. One would need decay estimates on the derivatives of the solution to justify the Taylor expansion used in~\cite{goodrich1964nucleation}. While the recent work~\cite{CS19} considered the well-posedness and regularity of Fokker--Planck type dynamics~\eqref{eq:Fokker--Planck} related to BD, as well as their convergence to equilibrium, we are not aware of results on decay estimates for derivatives of solutions of~\eqref{eq:Fokker--Planck}. In fact, we do not directly work on this equation, but perform a change of variables $q = Q(t,x)$ based on the characteristics of the transport part of the equation, which leads to another PDE, a pure diffusion of the form
\begin{equation}
\label{eq:diffusion-equation-intro}
\frac{\partial \Cdiff}{\partial t} = \frac{1}{2}\sigma^2(q) \frac{\partial^2\Cdiff}{\partial q^2},
\end{equation}
where the diffusion coefficient $\sigma^2$ depends on the coefficients $\alpha$ and $\beta$. The interest of the diffusion equation is that it is now possible to make precise the decay of the derivatives of $\mathfrak{C}$. The PDE~\eqref{eq:diffusion-equation-intro} allows us to relate BD and its Fokker--Planck approximation. The small parameter in this analysis is the inverse of the minimal cluster size~$M$. Due to the fact that BD is inherently nonlinear, we also introduce a splitting of the dynamics in order to restrict the nonlinearity to the evolution~\eqref{eq:EDO-Mono} of single vacancies. This splitting allows us to work in a simpler framework and to prove rigorously the link between the Fokker--Planck approximation and BD for larger cluster sizes. Moreover, this splitting is also of interest for numerical simulations~\cite{terrier2017coupling}.

%

\medskip

This article is organized as follows. We first present results concerning BD in Section~\ref{sec:well-posedness}. We start by recalling well-posedness results in Section~\ref{sec:-well-posedness-solution}, and next discuss in Section~\ref{sec:splitting} the convergence of a splitted dynamics where~\eqref{eq:EDO-Vacancy} and~\eqref{eq:EDO-Mono} are integrated successively. The proofs of the results given in Section~\ref{sec:well-posedness} are postponed to Appendices~\ref{sec:appendix-well-posedness} and~\ref{sec:appendix-splitting}. We then focus our attention on the Fokker--Planck approximation in Section~\ref{sec:Fokker--Planck}. After a heuristic derivation of the Fokker--Planck approximation, we present the diffusion equation~\eqref{eq:diffusion-equation-intro}, which is a key equation in this work. We state decay results on its solutions, which allows us to relate the diffusion equation and both the Fokker--Planck equation~\eqref{eq:Fokker--Planck} and BD~\eqref{eq:EDO-Vacancy} for large cluster sizes. The proofs of the technical results of Section~\ref{sec:Fokker--Planck} are gathered in Appendix~\ref{sec:appendix-proofs-FPCD}.

\section{Results on the Becker--D\"oring equations and their splitting}
\label{sec:well-posedness}

We recall in Section~\ref{sec:-well-posedness-solution} the mathematical framework in which BD is well posed~\cite{ball1986becker,Laurencot2002becker}, and introduce there some notation which will prove useful in the remainder of the section. We next introduce in Section~\ref{sec:splitting} a splitting of the dynamics and prove that it is consistent of order 1. We denote in this section the solution of BD by $u=(u_1,u_2,\dots)$.

\subsection{Well-posedness of the Becker--D\"oring equations}
\label{sec:-well-posedness-solution}

We consider the full BD (\textit{i.e.} equations~\eqref{eq:EDO-Vacancy}--\eqref{eq:EDO-Mono}), which is a nonlinear dynamics. We recall here the existence results by Ball, Carr and Penrose~\cite{ball1986becker}, and the refined existence result obtained in~\cite{Laurencot2002becker}. We also give an alternative proof of the uniqueness in Appendix~\ref{sec:appendix-well-posedness}, based on the dissipativity of the underlying operator. We work on the Hilbert space $\mathcal{H} = \ell^2(\mathbb{N}^*,\mathbb{R})$, endowed with its natural norm $\|\cdot\|$ and inner product $\langle \cdot, \cdot \rangle$. Unless stated otherwise, the norm $\|\cdot\|$ is the natural norm of $\mathcal{H}$ or the norm of bounded operators on $\mathcal{H}$, depending on the context.

Consider $u= (u_1,u_2,\cdots,u_n,\cdots) \in \mathcal{H}$ and denote by $(e_n)_{n \in \mathbb{N}^*}$ the orthonormal basis of $\mathcal{H}$ defined by $(e_n)_i = \delta_{ni}$, where $\delta_{ni}$ is the usual Kronecker symbol. In particular, $\langle u,\e_i\rangle = u_i$. BD can be written with this notation as the following Cauchy problem in $\mathcal{H}$:
\begin{equation}
\label{problem:P3}
\tag{BD}
{\left\{
\begin{aligned}
&\frac{du}{dt} = A(u_1)u,\\
&u(0) = u^0,
\end{aligned}
\right.}
\end{equation}
where the quasi-linear operator $A$ is defined, for all $v \in \mathcal{H}$, by:
\begin{equation}
\label{eq:operator-A}
\begin{aligned}
& A(v_1)e_1 = -2\beta_1 v_1 e_1 + \beta_1 v_1 e_2,\\
& A(v_1)e_n = (\alpha_n -\beta_n v_1)e_1 + \alpha_n e_{n-1} -(\beta_n v_1 + \alpha_n) e_n + \beta_n v_1 e_{n+1},\qquad n\ge 2.\\
\end{aligned}
\end{equation}
Alternatively, $A$ can be written as the following infinite matrix:
\begin{equation}
A(v_1) = \begin{pmatrix}
-2\beta_1 v_1 & 2\alpha_2-\beta_2 v_1 & \alpha_3 - \beta_3 v_1 & \alpha_4 -\beta_4 v_1 & \cdots \\
\beta_1 v_1 & -(\beta_2 v_1 +\alpha_2) & \alpha_3 & 0 & \cdots \\
0 & \beta_2 v_1 & -(\beta_3 v_1 +\alpha_3) & \alpha_4 & \cdots \\ 
0 & 0 & \beta_3 v_1 & -(\beta_4 v_1 +\alpha_4) & \ddots \\ 
\vdots & \vdots & \vdots & \ddots & \ddots &
\end{pmatrix} .
\end{equation}
The main difficulty of the problem~\eqref{problem:P3} comes from the unboundedness of the coefficients $\alpha_n$ and $\beta_n$. As will be made clear below, it is nonetheless possible to obtain existence and uniqueness results for coefficients which do not grow too fast. In fact, physically, $\alpha_n, \beta_n = \mathrm{O}(n^\gamma)$, with $\gamma = 1/3$ for vacancies and solutes which generate three-dimensional objects such as bubbles, and $\gamma = 1/2$ for interstitials which generate two-dimensional objects such as loops (see Example~\ref{expl:alpha-beta-discret} below). Nevertheless, in order to prove the existence and uniqueness of smoooth solutions, the following assumptions on $\alpha$ and $\beta$ are sufficient.

\begin{hypo}
\label{hypo:alpha-beta-discrete}
The sequences $\alpha = (\alpha_n)_{n\ge 1}$ and $\beta = (\beta_n)_{n\ge 1}$ are two sequences of non-negative real numbers, and there exists $B \in \mathbb{R}_+$ such that
\begin{equation}
\forall\, n\ge 1, \qquad \left|\alpha_{n+1}-\alpha_n\right| \leq B, \qquad \left|\beta_{n+1}-\beta_n\right| \leq B.
\label{H1}
\tag{H1}
\end{equation}
\end{hypo}
In fact, the proof of the uniqueness only requires $\alpha_{n+1}-\alpha_n \le B$ and $\beta_{n+1}-\beta_n \ge -B$ as in~\cite[Theorem~2.1]{Laurencot2002becker} (see Appendix~\ref{sec:appendix-well-posedness}). We consider the stronger conditions~\eqref{H1} in order to apply the regularity results from~\cite[Theorem~3.2]{ball1986becker}. Let us now present a specific example, which we will use throughout this work to illustrate the relevance of our assumptions.

\begin{expl}
\label{expl:alpha-beta-discret}
In many physical models~\cite{ghoniem1980numerical,ovcharenko2003gmic++,barbu2007cluster,jourdan2014efficient}, the expression of $\alpha_n$ and $\beta_n$ are chosen as follows:
\begin{equation}
\label{eq:beta-example}
\beta_n = {\left(\frac{48\pi^2}{V_\mathrm{at}^2}\right)}^{\gamma} D r_n,
\qquad 
\alpha_n = {\left(\frac{48\pi^2}{V_\mathrm{at}^2}\right)}^{\gamma} D r_n \exp{\left(-\frac{E_\mathrm{v}^\mathrm{f}}{k_\mathrm{B}T}\right)}\exp{\left(\frac{\omega}{r_n}\right)},
\end{equation}
where $D$ is the diffusion coefficient of mobile clusters, $V_\mathrm{at}$ the atomic volume, $E_\mathrm{v}^\mathrm{f}$ the formation energy of a vacancy, $k_\mathrm{B}$ the Boltzmann constant, $T$ the temperature, $\omega$ a parameter related to the type of clusters (vacancies or interstitials) and $r_n = n^\gamma$, with $\gamma \in \{\frac{1}{3}, \frac{1}{2}\}$. It is easy to see that the sequences $\alpha$ and $\beta$ indeed satisfy Assumption~\ref{hypo:alpha-beta-discrete}.
\end{expl}

In order to prove the global-in-time well-posedness, we introduce the following subset of $\mathcal{H}$ (already considered in~\cite{ball1986becker}):
\begin{equation}
\label{eq:def-Q}
\mathcal{Q} = {\left\{ u\in \mathcal{H}\ \middle|\ \forall n \geq 1, \ u_n \ge 0, \ \ \sum_{n\ge 1} nu_n < +\infty \right\}}.
\end{equation}
The condition $\sum n u_n < +\infty$ translates the physical fact that the total quantity of matter is finite. In fact, this quantity is conserved by the BD dynamics~\eqref{eq:EDO-Vacancy}--\eqref{eq:EDO-Mono}. For any element $u \in \mathcal{Q}$, we define
\begin{equation}
\label{eq:def-Q0}
Q(u) = \sum_{n\ge 1} n u_n.
\end{equation}
\begin{rmk}
\label{rmk:def-D-pos}
In view of~\eqref{eq:def-Q} and Assumption~\ref{hypo:alpha-beta-discrete}, an element $u \in \mathcal{Q}$ satisfies in particular 
\begin{equation}
\label{eq:in-ell-1}
0\le \sum_{n\ge 1} \alpha_n u_n < +\infty \qquad \text{and} \qquad 0\le \sum_{n\ge 1} \beta_n u_n  < +\infty.
\end{equation}
This means that the sequences $(\alpha_n u_n)_{n\ge 1}$ and $(\beta_n u_n)_{n\ge 1}$ are in $\ell^1(\mathbb{N}^*,\mathbb{R})$ when $u\in \mathcal{Q}$.
\end{rmk}

We are now in position to recall the following result on the well-posedness of BD.

\begin{thm}[see~\cite{ball1986becker,Laurencot2002becker}]
\label{thm:long-time}
Fix an initial condition $u^0 \in \mathcal{Q}$ and suppose that Assumption~\ref{hypo:alpha-beta-discrete} holds true. Then, there exists a unique global-in-time classical solution $u \in \mathcal{C}^0(\mathbb{R}_+,\mathcal{Q})\cap \mathcal{C}^1(\mathbb{R}_+,\mathcal{H})$ of the problem~\eqref{problem:P3}. Moreover,
\begin{equation}
\forall\, t\ge 0,\qquad Q(u(t)) = Q{\left(u^0\right)} \qquad\text{and}\qquad {\left\|u(t)\right\|} \le \frac{\pi Q{\left(u^0\right)}}{\sqrt{6}}.
\end{equation}
\end{thm}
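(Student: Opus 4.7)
The plan is to combine the classical truncation argument of~\cite{ball1986becker} with the dissipativity-based uniqueness proof advertised for Appendix~\ref{sec:appendix-well-posedness}. For each $N\ge 1$, I would introduce the finite-dimensional system obtained from~\eqref{eq:EDO-Vacancy}--\eqref{eq:EDO-Mono} by truncating at size $N$ (keeping only $u_1^N,\dots,u_N^N$ and adapting the equation for $u_N^N$ so that the moment $\sum_{n=1}^N n u_n^N$ is exactly conserved). The right-hand side is polynomial, so the Cauchy--Lipschitz theorem yields a local solution in $\mathbb{R}^N$. Invariance of the positive cone is then checked face by face (on $\{u_n^N = 0\}$ the time derivative is non-negative), and mass conservation shows that the solution stays in a bounded subset of $\mathbb{R}^N$, which gives global-in-time existence of each $u^N$.

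The passage to the limit $N\to+\infty$ rests on the \emph{a priori} bound on $\mathcal{Q}$: nonnegativity and conservation of $Q$ force $u_n \leq Q(u^0)/n$, hence
\begin{equation}
\label{eq:plan-bound}
\|u^N(t)\|^2 = \sum_{n\geq 1} \left(u_n^N(t)\right)^2 \leq Q(u^0) \sum_{n\geq 1} \frac{u_n^N(t)}{n} \leq Q(u^0)^2 \sum_{n\geq 1} \frac{1}{n^2} = \frac{\pi^2}{6}Q(u^0)^2.
\end{equation}
The truncated solutions therefore live in a fixed $\mathcal{H}$-ball. Under Assumption~\ref{hypo:alpha-beta-discrete}, $\alpha_n$ and $\beta_n$ grow at most linearly, which combined with mass conservation bounds $du_n^N/dt$ uniformly on compact time intervals. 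A diagonal extraction yields a pointwise limit $u(t)$; passing to the limit in each equation is straightforward except for the infinite sum in~\eqref{eq:EDO-Mono}, which requires propagating a tail estimate on $\sum_{n\geq N} n u_n$ along the dynamics. Once this is done, the $\mathcal{H}$-bound of~\eqref{eq:plan-bound} is inherited by $u(t)$, and mass conservation holds by Fatou's lemma together with a uniform-in-$N$ first-moment tail estimate.

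For uniqueness, I would follow the dissipativity argument. Writing $A(v_1)$ as in~\eqref{eq:operator-A} and summing by parts on $\ell^2$, the difference $w = u - \widetilde{u}$ of two classical solutions with the same initial datum satisfies an inequality of the form
\begin{equation}
\frac{1}{2}\frac{d}{dt}\|w\|^2 \leq C\left(\|u\|, \|\widetilde{u}\|, B\right) \|w\|^2,
\end{equation}
where the constant depends only on the bound $B$ appearing in~\eqref{H1} and on the uniform $\mathcal{H}$-norms of $u$ and $\widetilde{u}$. The crux of the computation is to rearrange the shifts $e_n \mapsto e_{n\pm 1}$ in~\eqref{eq:operator-A} so that only discrete differences of $\alpha$ and $\beta$ remain, which are controlled by $B$; Gronwall's lemma then gives $w\equiv 0$. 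Finally, the $\mathcal{C}^1$ regularity with values in $\mathcal{H}$ follows from the fact that $t\mapsto A(u_1(t))u(t)$ is continuous with values in $\mathcal{H}$ once $u\in \mathcal{C}^0(\mathbb{R}_+,\mathcal{Q})$, as in~\cite[Theorem~3.2]{ball1986becker}.

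The main obstacle I expect is the passage to the limit of the nonlinear infinite sum in~\eqref{eq:EDO-Mono}: its summand grows linearly in $n$, so only the first moment $\sum n u_n$ controls it, and tail estimates must be propagated in time to justify the interchange of limit and summation. This is precisely where working in $\mathcal{Q}$ rather than in the larger space $\mathcal{H}$ alone becomes essential.
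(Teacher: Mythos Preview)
Your plan matches the paper's: existence by truncation and compactness as in~\cite{ball1986becker}, the $\ell^2$ bound from $u_n \le Q(u^0)/n$, and uniqueness via a dissipativity/Gronwall argument. The paper in fact only spells out the uniqueness part in Appendix~\ref{sec:appendix-well-posedness}, taking existence and mass conservation directly from~\cite{ball1986becker,Laurencot2002becker}.

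There is one methodological difference in the dissipativity step. You propose to obtain $\langle A(b)w,w\rangle \le \lambda \|w\|^2$ by summation by parts, so that only the increments $\alpha_{n+1}-\alpha_n$ and $\beta_{n+1}-\beta_n$ remain and are controlled by~$B$. The paper instead establishes dissipativity of $A(b)-\lambda_b I$ by an inductive completing-the-square argument on the finite sections $c_{00}^n$: it introduces a recursively defined sequence $\delta_n^{\lambda}$ (Lemma~\ref{lemm:delta}), shows $\delta_n^{\lambda_b} \ge \frac{1}{2}(\alpha_{n+1}+\beta_n b) > 0$ for the explicit $\lambda_b = \frac{1}{2}\max\big(B(1+b),\alpha_2\big)$, and proves $\langle (A(b)-\lambda_b I)u,u\rangle \le -\delta_n^{\lambda_b} u_n^2$ for $u\in c_{00}^n$ by induction on~$n$ (Proposition~\ref{prop:A-dissipative}). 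Both routes work; the paper's yields an explicit $\lambda_b$ that is reused in the semigroup and splitting estimates of Section~\ref{sec:splitting}.

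One point of your sketch needs tightening. You claim the Gronwall constant depends only on~$B$ and on the $\mathcal{H}$-norms of $u$ and $\widetilde{u}$. When you write
\[
\frac{d(u-v)}{dt} = A(u_1)(u-v) + (u_1-v_1)\Alin^\beta v,
\]
the cross term requires a bound on $\|\Alin^\beta v\|$, and since $\beta_n$ may grow linearly under~\eqref{H1} this cannot be controlled by $\|v\|$ alone. The paper uses that $v\in\mathcal{Q}$ with $Q(v(t))=Q(u^0)$, which gives $\|\Alin^\beta v\| \le R(Q(u^0))$ (see Lemma~\ref{lemm:estimates-A-AA}). So the Gronwall constant depends on the first moment $Q(u^0)$, not merely on the $\ell^2$ norms---consistent with your own remark that working in $\mathcal{Q}$ rather than in $\mathcal{H}$ is essential.
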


The last estimate is easily obtained from the bound on~$Q(u(t))$. Let us mention that an alternative proof for the existence of the solution to BD is proposed in~\cite{Terrier}. Instead of approximating the infinite dimensional BD equations by a finite dimensional system as in~\cite{ball1986becker}, we work with an infinite dimensional dynamics on~$\mathcal{H}$ where the coefficients $\alpha_n,\beta_n$ are approximated by bounded coefficients $\alpha_n/(1+\varepsilon n^p)$ and $\beta_n/(1+\varepsilon n^p)$ for $\varepsilon>0$ and $p\ge 2$. The scheme of the proof of the existence remains however the same as in~\cite{ball1986becker}: it is first shown that the regularized dynamics is well posed in~$\mathcal{H}$ and preserves the total mass; a solution is then obtained by a compactness argument. The uniqueness part however follows a strategy different from the one in~\cite{Laurencot2002becker}, based on the dissipativity of the operator $A(b)$ for~$b \ge 0$ fixed. This dissipativity property is in fact useful for the numerical analysis provided in Section~\ref{sec:splitting}, which is why we present these estimates in Appendix~\ref{sec:appendix-well-posedness}, where our alternative proof for the uniqueness of the dynamics can also be read.

\subsection{Splitting of the dynamics and qualitative properties}
\label{sec:splitting}

We discuss in this section some properties of the dynamics obtained by splitting the nonlinear dynamics~\eqref{problem:P3} into two sub-dynamics, one on the first concentration only and another one on the remaining concentrations. Let us emphasize that this corresponds to a somewhat ideal splitting, where only the error with respect to the integration time of each subdynamics is taken into account. In particular, no error related to the use of integration schemes for the dynamics on the remaining concentrations is considered (as errors arising from the truncature of the size of the system, or the use of time-stepping methods).

The motivations for considering the properties of this ideal splitting are twofold.
First, it allows to restrict the nonlinearity to one equation, while the remainder of the dynamics becomes linear. It is one of the key features we used in~\cite{terrier2017coupling} for an efficient numerical integration of cluster dynamics.
Second, the validity of the Fokker--Planck approximation is obtained only for linear dynamics. The proof of such an approximation in Section~\ref{sec:Fokker--Planck} is performed for the linear sub-dynamics of clusters of larger sizes.
Note that the splitting introduced in this section can be generalized to a splitting on a first dynamics on small clusters from sizes $1$ to $M$ (which can then be integrated with any time-stepping method), and on a second dynamics on larger clusters of sizes greater than $M+1$ for some $M\ge 1$. The estimates we use in our proof, which rely on an explicit integration of the dynamics of the first concentration (see Lemma~\ref{lemm:estimate-for-varphi}) should then be generalized by more abstract results. Although this is possible, we refrain from doing so in order to keep the presentation more readable.

The splitting we consider here is a simple Lie-Trotter splitting~\cite{trotter1959product}, although more elaborate splitting such as Strang splittings can of course be considered. We prove that the associated dynamics is consistent with the full dynamics~\eqref{problem:P3} in the natural norm of~$\mathcal{H}$. Although this result is of course expected, proving it rigorously requires controlling the behavior of the solution of the splitted dynamics, for which no mass conservation holds. We need to strengthen Assumption~\ref{hypo:alpha-beta-discrete} to this end, in order to control the first and second derivatives of the solution.

\begin{hypo}
\label{hypo:alpha-beta-gamma}
There exist $0 \le \gamma \le 1/2$ and $K \in \mathbb{R}_+^*$ such that $0\le \alpha_n, \beta_n \le K n^\gamma$ for all $n \ge 1$.
\end{hypo}

Note that Assumption~\ref{hypo:alpha-beta-gamma} clearly holds true for Example~\ref{expl:alpha-beta-discret}. Let us now write more explicitly the splitted dynamics we consider. The sub-dynamics for the first concentration only reads 
\begin{equation}
\label{eq:sub-dynamic1}
{\left\{
    \begin{aligned}
    &\frac{du_1}{dt} = -2\beta_1 u_1^2 - {\left(\sum_{n\ge2} \beta_n u_n \right)}u_1 + {\left(\sum_{n\ge2} \alpha_n u_n + \alpha_2 u_2 \right)},\\
    &\frac{du_n}{dt} = 0,\qquad \forall n\ge 2,
    \end{aligned}
\right.}
\end{equation}
\textit{i.e.} $u_n$ is fixed for $n\ge 2$. We denote by $\varphi_t^{(u_2,\cdots)}$ the flow of this dynamics, or simply $\varphi_t$ when the dependence is clear. Note that this sub-dynamics is well-posed (see~\eqref{eq:sol-u1S} in Appendix~\ref{sec:appendix-splitting-proof}). The second sub-dynamics, for the remaining concentrations, reads
\begin{equation}
\label{eq:sub-dynamic2}
{\left\{
    \begin{aligned}
    &\frac{du_1}{dt} = 0,\\
    &\frac{du_n}{dt} = \beta_{n-1}u_1 u_{n-1} - {\left(\beta_n u_1 + \alpha_n \right)}u_n + \alpha_{n+1}u_{n+1},\qquad \forall n\ge 2,
    \end{aligned}
\right.}
\end{equation}
\textit{i.e.} $u_1$ is fixed. We denote by $\chi_t^{u_1}$ the flow of this dynamics, or simply $\chi_t$ when the dependence is clear. This sub-dynamics is also well-posed (see Proposition~\ref{prop:splitting-semigroup} in Appendix~\ref{sec:appendix-sub2}). One step of the splitted dynamics is encoded by the mapping $u^{k+1} = S_{\Delta t}{\left(u^k\right)}$ for a given time step $\Delta t > 0$, defined as
\begin{enumerate}
\item update the first concentration as $u^{k+1}_1 = \varphi_{\Delta t}^{{\left(u^k_2,\cdots\right)}}(u^k_1)$,
\item update the remaining concentrations as ${\left(u^{k+1}_2,\cdots\right)} = \chi_{\Delta t}^{u^{k+1}_1}{\left(u^k_2,\cdots\right)}$.
\end{enumerate}
The iterates defined as $u^{k} = S_{\Delta t}(u^{k-1})$ for $k\ge 1$ and some initial condition $u^0$ are an approximation of $u(k\Delta t)$, the solution of~\eqref{problem:P3} with initial condition $u^0$ at time $k\Delta t$. The following proposition states that the Lie-Trotter splitting is consistent of order 1 in the norm of~$\mathcal{H}$ (see Section~\ref{sec:appendix-splitting-proof} for the proof).
\begin{prop}
\label{prop:splitting}
Fix an initial condition $u^0 \in \mathcal{Q}$ with $u_1^0 > 0$ and a time $\tau > 0$. Suppose that Assumptions~\ref{hypo:alpha-beta-discrete} and~\ref{hypo:alpha-beta-gamma} hold true. Suppose also that there exists $k\ge 2$ such that $u_k^0 > 0$. Then, there exist a constant $L{\left(\tau, u^0\right)} \in \mathbb{R}_+$ and a time step $\Delta t_* > 0$ such that
\begin{equation}
\forall\, \Delta t \in (0,\Delta t_*],\qquad \forall\, 0\le n \le \frac{\tau}{\Delta t}, \qquad {\left\|u(n\Delta t) - u^n\right\|} \le L{\left(\tau,u^0\right)} \Delta t.
\end{equation}
\end{prop}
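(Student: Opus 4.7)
The plan is to follow the classical Lady–Windermere–fan argument for operator splittings: prove a local consistency error of order~$\Delta t^2$ per step, combine it with a stability estimate for the splitting map $S_{\Delta t}$ and for the exact flow $\Phi_t$ of~\eqref{problem:P3}, and conclude by telescoping and a discrete Gronwall inequality to obtain the global bound of order~$\Delta t$.

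\emph{Local consistency.} For a state $v \in \mathcal{Q}$, I would Taylor expand both $\Phi_{\Delta t}(v)$ and $S_{\Delta t}(v) = \chi_{\Delta t}^{w_1} \circ \varphi_{\Delta t}^{(v_2,\dots)}(v)$ (with $w_1 = \varphi_{\Delta t}^{(v_2,\dots)}(v)_1$) up to second order in $\Delta t$. The first–order terms coincide because the vector fields of~\eqref{eq:sub-dynamic1} and~\eqref{eq:sub-dynamic2} sum, at the point $v$, to $A(v_1)v$, so the leading remainder is of the form
\[
\Phi_{\Delta t}(v) - S_{\Delta t}(v) = R(v,\Delta t), \qquad \|R(v,\Delta t)\| \le C(v)\,\Delta t^{2}.
\]
The constant $C(v)$ involves the sup of the second time-derivatives of the two sub-flows, hence weighted norms such as $\sum_n n^{2\gamma} v_n^2$ arising from terms like $A(v_1)^2 v$ and $\partial_{v_1}A(v_1)\dot v_1\cdot v$. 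Assumption~\ref{hypo:alpha-beta-gamma} with $\gamma \leq 1/2$ is precisely what makes these quadratically-weighted sums compatible with the $\ell^{2}$ structure of $\mathcal{H}$, once a first moment is available through $v\in\mathcal{Q}$.

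\emph{Stability and a priori bounds on the iterates.} I expect this to be the main obstacle because, unlike the continuous dynamics, the splitting does not preserve the mass $Q$, so there is no free conservation law to bound the iterates $u^k$. Using the explicit integration of the scalar nonlinear equation~\eqref{eq:sub-dynamic1} (Lemma~\ref{lemm:estimate-for-varphi}), one can bound $u_1^{k+1}$ in terms of $\|u^k\|$ and its first moment; here the positivity assumption $u_1^0>0$ and the existence of some $u_k^0>0$ for $k\ge 2$ are needed so that the explicit formula remains well defined and bounded. For the linear half-step, the dissipativity of $A(b)$ at fixed $b\ge 0$, recalled in Appendix~\ref{sec:appendix-well-posedness}, yields $\|\chi_t^{b}\|_{\mathcal{H}\to\mathcal{H}} \le 1 + c\, t$ uniformly for $b$ bounded. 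Iterating these two estimates over $n \le \tau/\Delta t$ steps should provide a uniform bound $\|u^k\| + Q(u^k) \le M(\tau,u^0)$ for $\Delta t$ small, from which one reads both a uniform local-error constant $C(u^k) \le C_*(\tau,u^0)$ and a stability estimate
\[
\|S_{\Delta t}(v) - S_{\Delta t}(w)\| \le (1 + c\,\Delta t)\,\|v-w\|
\]
on the corresponding invariant set.

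\emph{Conclusion.} With these ingredients in hand, the standard telescopic decomposition
\[
u(n\Delta t) - u^n = \bigl[\Phi_{\Delta t}(u((n-1)\Delta t)) - \Phi_{\Delta t}(u^{n-1})\bigr] + \bigl[\Phi_{\Delta t}(u^{n-1}) - S_{\Delta t}(u^{n-1})\bigr],
\]
together with the Lipschitz estimate for the exact flow $\Phi_{\Delta t}$ provided by Theorem~\ref{thm:long-time}, yields by induction on $n$
\[
\|u(n\Delta t) - u^n\| \le (1+c\,\Delta t)\,\|u((n-1)\Delta t) - u^{n-1}\| + C_*\,\Delta t^{2}.
\]
A discrete Gronwall inequality then gives $\|u(n\Delta t) - u^n\| \le L(\tau,u^0)\,\Delta t$ for all $0\le n\Delta t\le \tau$, as soon as $\Delta t \le \Delta t_*$ is small enough for the iterates to remain in the invariant set identified above.
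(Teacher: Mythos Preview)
Your overall strategy---local $\Delta t^2$ consistency, stability, discrete Gronwall---is the same as the paper's, and your identification of the a~priori control on $Q(u^k)$ as the main obstacle is exactly right. Two differences and one soft spot are worth noting.

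\emph{Telescoping direction.} You insert the exact flow at the numerical iterate, writing the local error as $\Phi_{\Delta t}(u^{n-1})-S_{\Delta t}(u^{n-1})$ and propagating with the Lipschitz constant of $\Phi_{\Delta t}$. The paper does the dual decomposition: it applies consistency on the exact orbit, $u((n{+}1)\Delta t)-S_{\Delta t}(u(n\Delta t))$, and propagates with a stability estimate for $S_{\Delta t}$. Both are standard; the paper's choice has the mild advantage that consistency is evaluated at points where $Q$ is exactly conserved.

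\emph{Stability of $S_{\Delta t}$.} You propose to obtain $\|S_{\Delta t}(v)-S_{\Delta t}(w)\|\le(1+c\Delta t)\|v-w\|$ directly from the dissipativity of $A(b)$. This is harder than it looks, because in $S_{\Delta t}$ the parameter $b=\varphi_{\Delta t}(v)_1$ of the linear half-step depends nonlinearly on $v$, so $\chi_{\Delta t}^{b(v)}v-\chi_{\Delta t}^{b(w)}w$ is not controlled by semigroup contractivity alone. The paper sidesteps this by proving stability of $S_{\Delta t}$ \emph{indirectly}: it compares $S_{\Delta t}(v)$ and $S_{\Delta t}(w)$ to $\Phi_{\Delta t}(v)$ and $\Phi_{\Delta t}(w)$ via the already-established consistency bound, and then uses the Gronwall-type Lipschitz estimate for the exact flow coming from the dissipativity argument of Appendix~\ref{sec:appendix-well-posedness}. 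This yields $\|S_{\Delta t}(v)-S_{\Delta t}(w)\|\le e^{L_2\Delta t}\|v-w\|+2L_1\Delta t^2$, with an extra $\Delta t^2$ term that is harmless in the Gronwall sum. In your own telescoping you in fact only use the Lipschitz property of $\Phi_{\Delta t}$, so you do not need the direct $S_{\Delta t}$-stability you announce; but be aware that Theorem~\ref{thm:long-time} does not state that Lipschitz estimate---it comes from the computation~\eqref{eq:estimate-d-u-v-2} in the uniqueness proof.

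\emph{Controlling $Q(u^k)$.} Dissipativity of $A(b)$ gives you $\|\chi_t^b\|_{\mathcal{H}\to\mathcal{H}}\le e^{\lambda_b t}$, hence a bound on $\|u^k\|$, but \emph{not} on the first moment $Q(u^k)$; yet your consistency constant $C(u^{k})$ and the Lipschitz constant of $\Phi_{\Delta t}$ both depend on a $Q$-bound. The paper closes this with a separate growth estimate $Q(\chi_t^b u)\le Q(u)\,e^{2bKt}$ (Lemma~\ref{lemm:splitting-remains-Q}), and then runs a \emph{joint} induction on the three statements $Q(u^k)\le Q_*$, $\|u^k\|\le M_*$, and $\|u(k\Delta t)-u^k\|\le \mathcal{K}\Delta t$: the $Q$-bound feeds Lemma~\ref{lemm:estimate-for-varphi} to control $\varphi_{\Delta t}(u_1^k)$, which feeds the $Q$-growth under $\chi_{\Delta t}$, while the error bound is needed to recover $\|u^{k+1}\|\le M_*$. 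Your sentence ``iterating these two estimates should provide a uniform bound'' is pointing in the right direction, but you will need this extra $Q$-growth lemma and the intertwined induction to make it go through.
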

The assumption that the initial condition is non-zero in the sub-domain $[2,+\infty)$ ensures that the subdynamics~\eqref{eq:sub-dynamic1} are well posed (see Lemma~\ref{lemm:estimate-for-varphi}). In order to prove Proposition~\ref{prop:splitting}, we need some control over the first and second derivatives of the solution of~\eqref{problem:P3} (see Section~\ref{sec:appendix-splitting-uprime}).

\begin{prop}
\label{prop:uprime}
Suppose that Assumptions~\ref{hypo:alpha-beta-discrete} and~\ref{hypo:alpha-beta-gamma} hold true. Let $u$ be the classical solution of problem~\eqref{problem:P3} with initial condition $u^0 \in \mathcal{Q}$ and $u_1^0 > 0$. Fix a time $\tau > 0$, a constant $Q_* \in\mathbb{R}_+$ and suppose that $Q{\left(u^0\right)} \le Q_*$. Then, $u \in \mathcal{C}^2([0,\tau],\mathcal{H})$ and there exists $R(Q_*) \in \mathbb{R}_+$ such that, for all $0\le t\le \tau$,
\begin{equation}
{\left\| \frac{du}{dt}(t)\right\|}, {\left\| \frac{d^2u}{dt^2}(t)\right\|} \le R(Q_*).
\end{equation} 
\end{prop}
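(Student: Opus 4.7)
I decompose $A(v_1) = A_0 + v_1 B$, where $A_0$ and $B$ are the $v_1$-independent operators on $\mathcal{H}$ gathering respectively the $\alpha_n$- and $\beta_n$-contributions in~\eqref{eq:operator-A}. The BD equation then reads
\begin{equation}
\frac{du}{dt}(t) = A_0 u(t) + u_1(t)\,Bu(t),
\end{equation}
and formal differentiation in $t$ gives
\begin{equation}
\frac{d^2u}{dt^2}(t) = A_0 u'(t) + u_1'(t)\,Bu(t) + u_1(t)\,Bu'(t).
\end{equation}
The whole argument rests on a handful of weighted moment bounds on $u(t)$. Since $u_n(t) \ge 0$ and $n u_n(t) \le Q(u(t)) = Q(u^0) \le Q_*$ (Theorem~\ref{thm:long-time}), one gets $\sum_n u_n(t) \le Q_*$, and the assumption $\gamma \le 1/2$ then yields
\begin{equation}
\sum_{n\ge 1} n^{2\gamma} u_n(t)^2 \le Q_*^2, \qquad \sum_{n\ge 1} n^{4\gamma} u_n(t)^2 \le Q_*^2,
\end{equation}
via the elementary inequalities $n^{2\gamma} u_n^2 \le (n u_n) u_n$ and $n^{4\gamma} u_n^2 \le n^2 u_n^2 = (n u_n)(n u_n)$.

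The first-derivative bound follows directly: the explicit formulas $(A_0 u)_n = \alpha_{n+1} u_{n+1} - \alpha_n u_n$ and $(Bu)_n = \beta_{n-1} u_{n-1} - \beta_n u_n$ for $n \ge 2$, together with analogous expressions for $n = 1$ involving the series $\sum \alpha_n u_n, \sum \beta_n u_n \le K\sum n u_n \le KQ_*$, combined with Assumption~\ref{hypo:alpha-beta-gamma} and the first weighted moment, give $\|A_0 u(t)\|, \|Bu(t)\| \le C(Q_*)$ and hence $\|u'(t)\| \le R_1(Q_*)$.

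For $\|u''(t)\|$ I bound each term in the formal expression. The componentwise identity $u_n' = (A_0 u)_n + u_1 (Bu)_n$ yields $|u_n'(t)| \le C n^\gamma (1 + u_1(t))(u_{n-1}(t) + u_n(t) + u_{n+1}(t))$ for $n \ge 2$, whence
\begin{equation}
\sum_{n\ge 1} n^{2\gamma} u_n'(t)^2 \le C(1 + \|u(t)\|^2) \sum_n n^{4\gamma} u_n(t)^2 \le C'(Q_*)
\end{equation}
using $\|u(t)\| \le \pi Q_*/\sqrt{6}$. This quantity is precisely what controls $\|A_0 u'(t)\|$ and $\|Bu'(t)\|$ (the $n=1$ components being handled separately via $\sum \alpha_n |u_n'|, \sum \beta_n |u_n'| \le C(Q_*)$), and combined with $\|Bu(t)\| \le C(Q_*)$ and $|u_1'(t)| \le \|u'(t)\| \le R_1(Q_*)$ it gives $\|u''(t)\| \le R_2(Q_*)$. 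Setting $R(Q_*) = \max(R_1(Q_*), R_2(Q_*))$ concludes the norm part.

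I expect the main obstacle to be the rigorous $\mathcal{C}^2$-regularity of $t \mapsto u(t)$ in $\mathcal{H}$, since $A_0$ and $B$ are unbounded on $\mathcal{H}$, so $\mathcal{C}^1$-regularity of $t \mapsto Bu(t)$ does not follow for free from $u \in \mathcal{C}^1(\mathbb{R}_+, \mathcal{H})$. I plan to handle this componentwise: each $u_n$ is easily $\mathcal{C}^2$ in $t$ (for $n \ge 2$ the ODE right-hand side involves only $u_1$ and finitely many of the $u_m$; for $n = 1$ the defining series can be differentiated term by term thanks to the moment bounds), and I then upgrade pointwise convergence of the difference quotient $h^{-1}(u'(t+h) - u'(t))$ to $\mathcal{H}$-convergence by dominated convergence, with the uniform $\ell^2$-majorant provided precisely by the bound on $\sum_n n^{2\gamma} u_n'(t)^2$ derived above.
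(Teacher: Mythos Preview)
Your approach to the norm bounds is essentially the paper's. Both decompose $A(v_1) = A_{\mathrm{L}}^\alpha + v_1 A_{\mathrm{L}}^\beta$ (your $A_0 + v_1 B$) and rest on the weighted moments $\sum_n n^{2\gamma} u_n^2 \le Q_*^2$ and $\sum_n n^{4\gamma} u_n^2 \le Q_*^2$, which follow from $\gamma \le 1/2$ and $n u_n \le Q_*$. The only difference is packaging: the paper isolates a lemma bounding $\|(A_{\mathrm{L}}^\alpha)^2 w\|$, $\|A_{\mathrm{L}}^\alpha A_{\mathrm{L}}^\beta w\|$, $\|A_{\mathrm{L}}^\beta A_{\mathrm{L}}^\alpha w\|$, $\|(A_{\mathrm{L}}^\beta)^2 w\|$ for any $w \in \mathcal{Q}$ with $Q(w) \le Q_*$, then expands $u'' = (A_{\mathrm{L}}^\alpha + u_1 A_{\mathrm{L}}^\beta)^2 u + u_1' A_{\mathrm{L}}^\beta u$ and applies the lemma to $w = u(t)$. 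You instead bound $\sum_n n^{2\gamma} |u_n'|^2$ and use it to control $\|A_0 u'\|, \|B u'\|$; since $u' = A_{\mathrm{L}}^\alpha u + u_1 A_{\mathrm{L}}^\beta u$, this is the same computation regrouped.

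On the $\mathcal{C}^2$ regularity you are in fact more explicit than the paper, which simply writes the formal expression for $d^2u/dt^2$ and bounds it. Your dominated-convergence step, however, does not close as stated. A termwise majorant for $|h^{-1}(u_n'(t+h) - u_n'(t))|^2 = |u_n''(\xi_{n,h})|^2$ must dominate $\sup_s |u_n''(s)|^2$; from $u_m(s) \le Q_*/m$ one obtains $|u_n'(s)| \lesssim n^{\gamma-1}$ and then $|u_n''(s)| \lesssim n^{2\gamma-1}$, so the pointwise envelope behaves like $n^{4\gamma-2}$, which is not summable for $\gamma \in [1/4,1/2]$. The uniform-in-$t$ bound $\sum_n n^{2\gamma} u_n'(t)^2 \le C$ is a bound on a \emph{sum}, not a summable termwise envelope, so it does not feed into dominated convergence directly. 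A cleaner route is to show that the formal second derivative $v(t) := A_0 u'(t) + u_1'(t)\,Bu(t) + u_1(t)\,Bu'(t)$ is a continuous $\mathcal{H}$-valued map (for instance via the truncated finite-dimensional BD systems used in the existence proof, for which all the above estimates hold uniformly in the truncation), and then conclude $u \in \mathcal{C}^2$ from the componentwise identity $u'(t) = u'(0) + \int_0^t v(s)\,ds$.
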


In the following section, we will prove that the linear problem~\eqref{eq:sub-dynamic2} is related, up to a small error, to a diffusion equation, which is the key equation to relate the Fokker--Planck approximation and the Becker--D\"oring equations. 

\section{The Fokker--Planck approximation in the linear case}
\label{sec:Fokker--Planck}

The Fokker--Planck approximation~\eqref{eq:Fokker--Planck} is widely used in the materials science community to approximate the dynamics of large size clusters~\cite{goodrich1964nucleation,wolfer1977theory}. This approximation gives accurate results in very good agreement with BD when sufficiently precise numerical schemes are used for the simulation~\cite{jourdan2016accurate}. It proves to be efficient and speeds up the simulations for complex systems~\cite{jourdan2014efficient}. We also report a very good agreement between the solution of the exact BD and a coupling approach solving the ODEs for small size clusters and the Fokker--Planck PDE for large size ones~\cite{terrier2017coupling}. Nevertheless, the agreement between BD and its Fokker--Planck approximation at finite times has never been quantified to our knowledge. The only results we are aware of concerning the relationship between BD and FP are based on hydrodynamic limits or space/time rescalings~\cite{Velazquez98,HC99,CGPV02}, which lead to a vanishing diffusion. We provide in this section a proof of the correctness of the Fokker--Planck limit using stochastic techniques, and quantify the approximation error. The small parameter in this analysis is the inverse of the minimal cluster size~$M$.

In the whole section, the concentration $C_1$ of single vacancies is supposed to be fixed. The section is organized as follows. We first present a formal derivation of the Fokker--Planck approximation in Section~\ref{sec:standard-approx}. We next heuristically derive a reformulation of the Fokker--Planck approximation in the form of a diffusion equation (see Section~\ref{sec:formal-results}), for which we state a key result on the decay of the spatial derivatives of the solution in Section~\ref{sec:thm-decay}. This result allows us to rigorously establish the link between the Fokker--Planck approximation and BD equations, and to quantify errors as a function of the minimal cluster size (see Section~\ref{sec:Diff2CDandFP}).

\subsection{Heuristic derivation of the Fokker--Planck approximation}
\label{sec:standard-approx}

We describe here the derivation of the Fokker--Planck approximation as originally presented in the materials science community~\cite{goodrich1964nucleation}, pointing out the parts of the argument which require a more rigorous mathematical analysis. Let us emphasize that all computations presented here are formal.

Define the regular mesh $(x_n)_{n\ge 0}$ of $[0,+\infty)$ by $x_n = n$. The mesh size is $\Delta x = 1$. Let us assume that there exist smooth functions $\alpha,\beta \in \mathcal{C}^\infty(\mathbb{R}_+, \mathbb{R}_+)$ and $\CFP \in \mathcal{C}^\infty(\mathbb{R}_+ \times \mathbb{R}_+, \mathbb{R}_+)$ such that, for all $n \in \mathbb{N}^*$ and $t\ge 0$,
\begin{equation}
\CFP(t,x_n) = C_n(t), \qquad \alpha(x_n) = \alpha_n, \qquad \beta(x_n) = \beta_n.
\end{equation}
When $C_1$ is fixed, Equation~\eqref{eq:EDO-Vacancy} can be written, for $n\ge 2$, as
\begin{equation}
\begin{aligned}
\frac{\partial \CFP}{\partial t}(t,x_n) =&\ C_1{\left[\beta(x_{n-1})\CFP(t,x_{n-1}) - \beta(x_{n})\CFP(t,x_n)\right]} + \alpha(x_{n+1})\CFP(t,x_{n+1}) - \alpha(x_{n})\CFP(t,x_n).
\end{aligned}
\label{eq:EDO-continuous}
\end{equation}
Let us emphasize that fixing $C_1$ is crucial for the argument. In practice, this arises through the splitting described in Section~\ref{sec:splitting}. By Taylor expansions at order 2,
\[
  \begin{aligned}
    \alpha(x_{n+ 1})\CFP(t,x_{n + 1}) & \simeq \alpha(x_{n})\CFP(t,x_n) + \Delta x \frac{\partial}{\partial x}(\alpha \CFP)(t,x_n)  + \frac{1}{2}(\Delta x)^2\frac{\partial^2}{\partial x^2}(\alpha \CFP)(t,x_n), \\
    \beta(x_{n- 1})\CFP(t,x_{n - 1}) & \simeq \beta(x_{n})\CFP(t,x_n) - \Delta x \frac{\partial}{\partial x}(\beta \CFP)(t,x_n)  + \frac{1}{2}(\Delta x)^2\frac{\partial^2}{\partial x^2}(\beta \CFP)(t,x_n),
  \end{aligned}
  \]
so that, with $\Delta x = 1$, 
\begin{equation}
\frac{\partial \CFP}{\partial t}(t,x) \simeq - \frac{\partial (F\CFP)}{\partial x}(t,x)  + \frac{1}{2} \frac{\partial^2 (D\CFP)}{\partial x^2}(t,x),
\label{eq:FK_0}
\tag{FP}
\end{equation}
where $F$ and $D$ are given by
\begin{equation}
  \label{eq:def_F_D}
  F(x) = \beta(x) C_1 - \alpha(x),\qquad  D(x) = \beta(x) C_1 + \alpha(x).
\end{equation}
The main problem with this derivation is to control the remainders of the Taylor expansions since the mesh size~$\Delta x$ is fixed. This amounts to controlling the third derivatives of $\alpha\mathscr{C}$ and $\beta\mathscr{C}$. Since $\alpha$ and $\beta$ are known, we actually only need to control the third derivative of $\mathscr{C}$. However, classical tools from the analysis of PDEs are usually used to produce \textit{a priori} regularity estimates of the solution, and sometimes of its derivatives, but rarely to state decay estimates~\cite{evans,friedman2008partial}. Moreover, the cases where such decay estimates are stated correspond to the situations where the diffusion term is bounded, which is not the case for BD as $D(x) = \mathrm{O}(x^\gamma)$.
\begin{rmk}
Even though we do not study the well-posedness of the Fokker--Planck equation~\eqref{eq:Fokker--Planck}, there exist classical solutions to Cauchy problems associated to such an equation~\cite{lebris2008existence} (see also~\cite{CS19} when the equation is not posed on the whole space but a boundary condition inspired by BD is imposed at $x=0$).
\end{rmk}

The aim of the next subsection is to reformulate the Fokker--Planck equation as another equation for which we can characterize the decay of the derivatives.

\subsection{Heuristic reformulation as a diffusion equation}
\label{sec:formal-results}

In order to control the decay of the derivatives of the solution to~\eqref{eq:FK_0}, we use a change of variables to reformulate the Fokker--Planck approximation with fixed $C_1$ as a diffusion equation without advection. The decay of the spatial derivatives of the solution of the diffusion equation can then be made precise (see Theorem~\ref{thm:regularity}).

\subsubsection{Main assumptions} 
Let us first state the assumptions we need on the coefficients $\alpha$ and $\beta$ for this analysis.
\begin{hypo}
\label{hypo:alpha-beta-prime}
The functions $\alpha$ and $\beta$ are smooth, non-decreasing  and non-negative. Moreover, there exists $0 \le \gamma \le 1/2$ such that, as $x \to +\infty$,
\begin{equation}
\forall k \in \{0,1,2\}, \qquad \alpha^{(k)}(x) = \mathrm{O}(x^{\gamma-k}), \quad \beta^{(k)}(x) = \mathrm{O}(x^{\gamma-k}).
\end{equation}
Finally, we assume that there exist $M, K_+, K_- > 0$ (which depend on $C_1$), such that, for all $x \ge M$, the function $F = \beta C_1 - \alpha$ is positive and increasing and 
\begin{equation}
\label{eq:minoration-F}
\forall\, x\ge M, \qquad K_- x^\gamma \le F(x) \le K_+ x^\gamma.
\end{equation}
\end{hypo}
A consequence of this assumption is that the functions $F$ and $D$ defined in~\eqref{eq:def_F_D} are smooth. Moreover, $F^{(k)}(x) = \mathrm{O}(x^{\gamma-k})$ for $0 \le k \le 2$, and the same estimates hold for $D$ and its derivatives.
\begin{expl}
\label{expl:alpha-beta-continus}
The functions $\alpha$ and $\beta$ associated with the coefficients of Example~\ref{expl:alpha-beta-discret} read, for $x\ge 1$,
\begin{equation}
\alpha(x) = \alpha_0 (x-1)^\gamma \exp{\left(\frac{\omega}{x^\gamma}\right)}, \qquad \beta(x) = \beta_0 x^\gamma,
\end{equation}
where $\alpha_0 = {\left(48\pi^2 V_\mathrm{at}^{-2}\right)}^{\gamma} D \exp(-E_\mathrm{v}^\mathrm{f}/(k_\mathrm{B}T))$, $\beta_0 = {\left(48\pi^2 V_\mathrm{at}^{-2}\right)}^{\gamma} D$ and $\gamma \in {\left\{\frac{1}{3}, \frac{1}{2}\right\}}$. The condition that $F = \beta C_1 - \alpha$ is positive and increasing in Assumption~\ref{hypo:alpha-beta-prime} therefore holds true for $M > \max{\left(1,\omega^{1/\gamma} \ln(\beta_0 C_1/\alpha_0)^{-1/\gamma}\right)}$. The other conditions in  Assumption~\ref{hypo:alpha-beta-prime} are easily seen to be satisfied.
\end{expl}

\subsubsection{Introducing a change of variable} 
Define the following functions for $x\ge M$:
\begin{equation}
g(x) = \frac{1}{F(x)}, \qquad G(x) = \int_M^x g(y)\,dy.
\end{equation}
Both are well defined for $x\ge M$, and smooth. Moreover, in view of~\eqref{eq:minoration-F} in Assumption~\ref{hypo:alpha-beta-prime}, $G$ is a non-negative increasing function such that $G(x) \to +\infty$ as $x\to +\infty$. We denote by $G^{-1}$ the inverse function of $G$, well defined on $[0,+\infty)$ and with values in $[M,+\infty)$. Introduce the domain
\begin{equation}
\label{eq:Z_M}
Z_M = {\left\{ (t,x) \in \mathbb{R}_+\times [M,+\infty)\ \middle| \ t \le G(x) \right\}},
\end{equation}
illustrated in Figure~\ref{fig:domain-ZM}, and the function
\begin{equation}
\label{eq:def-Qtx}
\forall\, (t,x) \in Z_M, \qquad Q(t,x) = G^{-1}(G(x)-t).
\end{equation}
Then, for a given function $\CFP_\mathrm{adv}^0 \in \mathcal{C}^1([M,+\infty))$, the function defined for all $(t,x) \in Z_M$ by
$\CFP_\mathrm{adv}(t,x) = \CFP^0_\mathrm{adv}{\left[ Q(t,x) \right]}$,
satisfies (by the methods of characteristics, see \textit{e.g.}~\cite{evans})
\begin{equation}
\forall (t,x) \in Z_M, \qquad \frac{\partial \CFP_\mathrm{adv}}{\partial t}(t,x) = - F(x) \frac{\partial \CFP_\mathrm{adv}}{\partial x}(t,x).
\end{equation}
The above equation corresponds to the dominant "advection" part of the Fokker--Planck equation~\eqref{eq:FK_0} (see the discussion at the end of this section, in particular the estimate~\eqref{eq:estimate-advection}). Let us notice that, for $t\ge 0$ and $q\ge M$, it holds $G(q) + t \ge 0$. We can therefore introduce the functions 
\begin{equation}
\label{eq:CFP2Cdiff}
\forall\, (t,q) \in \mathbb{R}_+ \times [M,+\infty),\qquad X(t,q) = G^{-1}(G(q)+t), \qquad \Cdiff(t,q) = \CFP{\left[t,X(t,q)\right]},
\end{equation}
where $\CFP$ is the solution of~\eqref{eq:Fokker--Planck}. Note that $X$ is the inverse function of the characteristic $Q(t,x)$ appearing in~\eqref{eq:def-Qtx}, \textit{i.e.} $X(t,Q(t,x)) = x$ and $Q(t,X(t,q)) = q$. Using the function $X$ will allow us to suppress the advection part in~\eqref{eq:FK_0}. Before we make this precise, let us state some useful estimates on the functions $X$ and $Q$.
\begin{lemm}
\label{lemm:GandG}
Suppose that Assumption~\ref{hypo:alpha-beta-prime} holds true. Then, there exist $\rho_1, \rho_2 > 0$ such that
\begin{equation}
\forall\, x\ge M, \qquad 0\le G(x) \le \rho_1 x^{1-\gamma}, \qquad\text{and}\qquad \forall\, x \ge 1,\qquad M\le G^{-1}(x) \le \rho_2 x^{\frac{1}{1-\gamma}}.
\end{equation}
\end{lemm}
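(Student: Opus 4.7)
The plan is to translate the two-sided polynomial bound on $F$ given in~\eqref{eq:minoration-F} into a two-sided bound on the integrand $g(y) = 1/F(y)$, then integrate to get the bound on $G$, and finally invert these inequalities to control $G^{-1}$. The condition $\gamma \le 1/2 < 1$ ensures all the integrals of $y^{-\gamma}$ we encounter are convergent polynomial expressions.

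First I would observe that~\eqref{eq:minoration-F} gives $1/(K_+ y^\gamma) \le g(y) \le 1/(K_- y^\gamma)$ for $y \ge M$. Integrating on $[M,x]$ yields
\begin{equation}
\frac{x^{1-\gamma}-M^{1-\gamma}}{K_+(1-\gamma)} \le G(x) \le \frac{x^{1-\gamma}-M^{1-\gamma}}{K_-(1-\gamma)}.
\end{equation}
The nonnegativity of $G$ and the upper bound $G(x) \le \rho_1 x^{1-\gamma}$ with $\rho_1 = [K_-(1-\gamma)]^{-1}$ are immediate from this.

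For the estimate on $G^{-1}$, the lower bound $G^{-1}(x)\ge M$ follows simply from $G(M)=0$ and the monotonicity of $G$. For the upper bound, I would set $y = G^{-1}(x)$, so that the lower bound on $G$ gives $y^{1-\gamma} \le K_+(1-\gamma)\, x + M^{1-\gamma}$. For $x \ge 1$ we can bound the right-hand side by $[K_+(1-\gamma)+M^{1-\gamma}]\,x$, and raising both sides to the power $1/(1-\gamma)$ yields the claimed inequality with
\begin{equation}
\rho_2 = \bigl[K_+(1-\gamma)+M^{1-\gamma}\bigr]^{1/(1-\gamma)}.
\end{equation}

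There is no real obstacle here; the argument is elementary calculus once Assumption~\ref{hypo:alpha-beta-prime} is in hand. The only point deserving mild care is why the restriction $x\ge 1$ appears in the upper bound on $G^{-1}$: it is needed to absorb the additive constant $M^{1-\gamma}$ into the linear term, so that the resulting bound has the pure power form $\rho_2 x^{1/(1-\gamma)}$.
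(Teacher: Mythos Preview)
Your proof is correct. The bound on $G$ is obtained exactly as in the paper. For the bound on $G^{-1}$, the paper takes a slightly different route: it writes the ODE $(G^{-1})' = F\circ G^{-1}$, bounds the right-hand side by $K_+(G^{-1})^\gamma$, and invokes a nonlinear Gronwall inequality to obtain $G^{-1}(x) \le \bigl(M^{1-\gamma} + (1-\gamma)K_+ x\bigr)^{1/(1-\gamma)}$. Your approach---inverting the lower integral bound on $G$ directly---is more elementary and yields the very same inequality without appealing to Gronwall; the two arguments are really the same computation viewed from opposite ends, since the nonlinear Gronwall step here amounts precisely to separating variables in $(G^{-1})' \le K_+(G^{-1})^\gamma$ and integrating, which is your integral bound read backwards.
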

\begin{proof}
In view of~\eqref{eq:minoration-F}, it holds, for $x\ge M$, that $0 \le G(x) \le \left(x^{1-\gamma} - M^{1-\gamma}\right)/[K_-(1-\gamma)]$, 
from which the first estimate follows. Moreover, by definition, ${\left(G^{-1}\right)}^\prime = F\circ G^{-1}$. Since $G^{-1}(x) \ge M$ for all $x\ge 0$, it holds ${\left(G^{-1}\right)}^\prime(x) \le K_+ {\left(G^{-1}(x)\right)}^{\gamma}$. Using a nonlinear generalisation of Gronwall's inequality~\cite{dragomir2003some}, we obtain $G^{-1}(x) \le {\left(M^{1-\gamma} + (1-\gamma)K_+ x \right)}^{\frac{1}{1-\gamma}}$, from which the second estimate follows.
\end{proof}
\begin{lemm}
\label{lemm:XandQ}
Fix a time $t \ge 0$. Suppose that Assumption~\ref{hypo:alpha-beta-prime} holds true. Then,
\begin{equation}
\label{eq:limit-XandQ}
\lim_{q\to +\infty} \frac{X(t,q)}{q} = 1 \qquad\text{and}\qquad \lim_{x\to +\infty} \frac{Q(t,x)}{x} = 1.
\end{equation}
\end{lemm}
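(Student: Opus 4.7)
The plan is to control the differences $X(t,q)-q$ and $x-Q(t,x)$ directly by the mean value theorem applied to $G^{-1}$, then use the sublinear growth of $F$ together with the polynomial bounds of Lemma~\ref{lemm:GandG} to conclude. First I would differentiate $G\circ G^{-1}=\mathrm{id}$ to get $(G^{-1})'(s)=F(G^{-1}(s))$. Applying the mean value theorem to $G^{-1}$ on the interval $[G(q),G(q)+t]$ then yields
\[
X(t,q)-q \;=\; G^{-1}(G(q)+t) - G^{-1}(G(q)) \;=\; t\,F\bigl(G^{-1}(\eta_q)\bigr)
\]
for some $\eta_q\in(G(q),G(q)+t)$. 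Since $G^{-1}$ is increasing and $F$ is increasing on $[M,+\infty)$ (Assumption~\ref{hypo:alpha-beta-prime}), and since $G^{-1}(\eta_q)\le X(t,q)$, the growth estimate $F(\xi)\le K_+\xi^\gamma$ gives the clean bound $0\le X(t,q)-q\le t K_+ X(t,q)^\gamma$.

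The next step is to feed in Lemma~\ref{lemm:GandG}: it provides $X(t,q)\le \rho_2\bigl(G(q)+t\bigr)^{1/(1-\gamma)}\le \rho_2\bigl(\rho_1 q^{1-\gamma}+t\bigr)^{1/(1-\gamma)}$, which is $O(q)$ as $q\to+\infty$. Consequently $X(t,q)^\gamma/q = O(q^{\gamma-1})$, and since $\gamma\le 1/2<1$ this vanishes as $q\to+\infty$. Dividing the preceding inequality by $q$ then delivers $X(t,q)/q\to 1$.

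The second limit is obtained by the symmetric argument. For $x$ large enough that $G(x)\ge t$, the quantity $Q(t,x)=G^{-1}(G(x)-t)$ is well defined and satisfies $Q(t,x)\le x$ by monotonicity of $G^{-1}$. The mean value theorem applied to $G^{-1}$ on $[G(x)-t,G(x)]$ gives $x-Q(t,x)=tF(G^{-1}(\eta_x))$ for some $\eta_x\in(G(x)-t,G(x))$, and now $G^{-1}(\eta_x)\le x$ so that $0\le x-Q(t,x)\le t K_+ x^\gamma$, whence $Q(t,x)/x\to 1$ as $x\to +\infty$. I do not anticipate a serious obstacle: the only point requiring mild care is ensuring that the arguments of $G^{-1}$ remain nonnegative, which simply amounts to taking $q$ or $x$ large enough. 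The whole argument is a standard combination of the mean value theorem with the polynomial growth already packaged in Lemma~\ref{lemm:GandG} and Assumption~\ref{hypo:alpha-beta-prime}.
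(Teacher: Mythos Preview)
Your argument is correct and essentially the same as the paper's: both apply the mean value theorem (the paper phrases it as a first-order Taylor expansion in $t$ of $Q(\cdot,x)$, which amounts to the same thing since $\partial_t Q(s,x)=-F(G^{-1}(G(x)-s))$) and then use monotonicity of $F$ and $G^{-1}$ together with the bound $F\le K_+(\cdot)^\gamma$ to squeeze. The only cosmetic difference is that the paper treats $Q$ explicitly and leaves $X$ to ``a similar reasoning'', whereas you treat $X$ explicitly and invoke Lemma~\ref{lemm:GandG} to control $X(t,q)^\gamma$; your $Q$ argument is identical to the paper's.
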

\begin{proof}
Since $G(x)\to +\infty$ as $x\to +\infty$, there is $x^*$ such that $t + 1\le G(x^*)$. Therefore, $Q(t,x)$ is well-defined for all $x\ge x^*$. For any $x\ge x^*$, there exists $t_x \in [0, t]$ such that
\begin{equation}
\label{eq:taylor-Qtx}
Q(t,x) = Q(0,x) - t\partial_t Q(t_x,x) = x - t{\left(F\circ G^{-1}\right)}{\left(G(x) - t_x\right)}.
\end{equation}
Moreover, $1 \le G(x) - t_x \le G(x)$ and since $G^{-1}$ is increasing, $M \le G^{-1}{\left(G(x) - t_x\right)} \le x$. Therefore, since $F$ is increasing by Assumption~\ref{hypo:alpha-beta-prime},
\begin{equation}
\label{eq:estimate-Rtx}
F(M) \le {\left(F\circ G^{-1}\right)}{\left(G(x) - t_x\right)} \le K_+ x^\gamma,
\end{equation}
so that, in view of~\eqref{eq:taylor-Qtx}, it holds $x - t K_+ x^\gamma \le Q(t,x) \le x - tF(M)$. The conclusion follows from a squeeze theorem since $0\le \gamma \le 1/2$. A similar reasoning can be used to prove the limit of $X(t,q)/q$ as $q\to +\infty$.
\end{proof}
\begin{lemm}
\label{lemm:R_F}
Define the functions $R_{F,1}$, $R_{F,2}$ and $R_{D,1}$ as follows: for $(t,x) \in Z_M$, 
\begin{equation}
R_{F,1}(t,x) = \frac{F(Q(t,x))}{F(x)} - 1,\qquad R_{F,2}(t,x) =  \frac{F^2(Q(t,x))}{F^2(x)} - 1, \qquad R_{D,1}(t,x) = \frac{D(Q(t,x))}{D(x)} - 1.
\end{equation} 
Suppose that Assumption~\ref{hypo:alpha-beta-prime} holds true. Then, there exists a non-negative function $K \in \mathcal{C}^0(\mathbb{R}_+)$ such that
\begin{equation}
\forall\, (t,x) \in Z_M, \qquad {\left| R_{F,1}(t,x) \right|}, {\left| R_{F,2}(t,x) \right|}, {\left| R_{D,1}(t,x) \right|} \le K(t)x^{\gamma-1}.
\end{equation}
\end{lemm}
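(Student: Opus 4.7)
The strategy for $R_{F,1}$ is the mean value theorem applied to $F$ on the interval $[Q(t,x),x]$: there exists $\xi\in[Q(t,x),x]\subseteq[M,x]$ with
\begin{equation}
F(Q(t,x))-F(x)=F'(\xi)\bigl(Q(t,x)-x\bigr).
\end{equation}
I would then combine this with three ingredients already essentially at hand: (i) the estimate $|Q(t,x)-x|\le t K_+ x^{\gamma}$, read off directly from \eqref{eq:taylor-Qtx}--\eqref{eq:estimate-Rtx} in the proof of Lemma~\ref{lemm:XandQ}; (ii) the lower bound $F(x)\ge K_-x^{\gamma}$ from \eqref{eq:minoration-F}; and (iii) the pointwise bound $|F'(y)|\le C_F y^{\gamma-1}$ for all $y\ge M$, obtained from the growth assumption $F'(y)=\mathrm{O}(y^{\gamma-1})$ in Assumption~\ref{hypo:alpha-beta-prime} (enlarging $C_F$ to absorb the compact set where the asymptotic is not yet in force). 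Dividing by $F(x)$ and cancelling $x^{\gamma}$ gives
\begin{equation}
|R_{F,1}(t,x)|\le \frac{C_F K_+ t}{K_-}\,\xi^{\gamma-1}.
\end{equation}

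The main obstacle is that $\xi$ is only controlled from below by $Q(t,x)$, which in principle may be as small as $M$; hence the naive bound $\xi^{\gamma-1}\le M^{\gamma-1}$ is uniform in $x$ but does not deliver the required decay. I would resolve this by a two-regime split at the threshold $x_\star(t):=(2tK_+)^{1/(1-\gamma)}$. In the large-$x$ regime $x\ge x_\star(t)$, the bound $tK_+x^{\gamma}\le x/2$ yields $Q(t,x)\ge x/2$, so that $\xi^{\gamma-1}\le 2^{1-\gamma}x^{\gamma-1}$ and the conclusion follows. In the small-$x$ regime $M\le x<x_\star(t)$, I would use the trivial observation that $F$ being positive and increasing on $[M,+\infty)$ forces $0\le F(Q(t,x))/F(x)\le 1$, hence $|R_{F,1}(t,x)|\le 1$; moreover $x^{1-\gamma}<2tK_+$ in this range, so $1\le 2tK_+\cdot x^{\gamma-1}$. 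Combining the two regimes yields $|R_{F,1}(t,x)|\le K(t)\,x^{\gamma-1}$ for the continuous nonnegative function $K(t)=\max\bigl(2tK_+,\,2^{1-\gamma}C_F K_+ t/K_-\bigr)$, which has $K(0)=0$ as expected since $Q(0,x)=x$.

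The two remaining bounds are obtained by light variations on this argument. For $R_{F,2}$, I would use the algebraic identity $R_{F,2}=R_{F,1}(R_{F,1}+2)$ together with $R_{F,1}\in[-1,0]$ to deduce $|R_{F,2}|\le 2|R_{F,1}|$, so the bound on $R_{F,1}$ transfers with the constant doubled. For $R_{D,1}$, the mean-value argument is repeated verbatim with $D=\beta C_1+\alpha$ in place of $F$: Assumption~\ref{hypo:alpha-beta-prime} provides $|D'(y)|\le C_D y^{\gamma-1}$ on $[M,+\infty)$, and the lower bound $D(x)\ge K_-x^{\gamma}$ follows from $D(x)=F(x)+2\alpha(x)\ge F(x)\ge K_-x^{\gamma}$ on the same interval. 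After enlarging the constant in $K(t)$ to account for all three remainders simultaneously, a single continuous nonnegative function $K$ controls them all.
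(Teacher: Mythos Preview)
Your proof is correct and follows the same mean-value-theorem strategy as the paper: write $Q(t,x)=x-R(t,x)$ with $0\le R(t,x)\le tK_+x^{\gamma}$, apply the mean value theorem to $F$ (respectively $D$, $F^2$), and divide by $F(x)\ge K_-x^{\gamma}$ (respectively $D(x)$, $F^2(x)$).

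Two points are worth flagging. First, your two-regime split at $x_\star(t)=(2tK_+)^{1/(1-\gamma)}$ explicitly addresses something the paper's one-line conclusion leaves implicit: since the mean-value point $\xi$ only satisfies $\xi\ge Q(t,x)\ge M$, the naive bound $|F'(\xi)|\le C_F\xi^{\gamma-1}$ does not directly yield $x^{\gamma-1}$ decay when $Q(t,x)$ is close to $M$. Your split---using $Q(t,x)\ge x/2$ for large $x$, and the trivial bound $|R_{F,1}|\le 1$ together with $x^{1-\gamma}<2tK_+$ for small $x$---cleanly resolves this and produces an explicit continuous $K(t)$. The paper's proof simply cites \eqref{eq:minoration-F} and \eqref{eq:estimate-Rtx} without spelling this step out.

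Second, your treatment of $R_{F,2}$ via the factorisation $R_{F,2}=R_{F,1}(R_{F,1}+2)$ and the observation $R_{F,1}\in[-1,0]$ is a mild simplification over the paper, which instead applies the mean value theorem directly to $F^2$. Both routes lead to the same bound, but yours recycles the work already done for $R_{F,1}$.
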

\begin{proof}
In view of~\eqref{eq:taylor-Qtx}, it holds $Q(t,x) = x - R(t,x)$ for any $(t,x) \in Z_M$, where $R(t,x) = t {\left(F\circ G^{-1}\right)}{\left(G(x) - t_x\right)}$. Fix $(t,x) \in Z_M$. Using a Taylor expansion, there exists $\zeta_{F,1},\zeta_{D,1} \in [x-R(t,x),x]$ such that $F(Q(t,x)) = F(x) - R(t,x) F^\prime(\zeta_{F,1})$ and $D(Q(t,x)) = D(x) - R(t,x) D^\prime(\zeta_{D,1})$. Moreover, there exists $\zeta_2 \in [x-R(t,x),x]$ such that $F^2(Q(t,x)) = F^2(x) - 2 R(t,x) F(\zeta_2) F^\prime(\zeta_2)$. The conclusion then follows from~\eqref{eq:minoration-F} and~\eqref{eq:estimate-Rtx}.
\end{proof}

\begin{expl}
\label{expl:alpha-beta-param}
Fix $C_1 > 0$ and consider the functions $\alpha(x) = \alpha_0 x^\gamma$ and $\beta(x) = \beta_0 x^\gamma$. These functions are asymptotically equivalent to those of Example~\ref{expl:alpha-beta-continus}. The simplicity of their expressions allows us to give analytic expressions for the functions introduced in this section. Defining $\lambda_0 = \beta_0 C_1 - \alpha_0$, it holds, for any $M > 0$,
\begin{equation}
\forall\, x\ge M,\qquad G(x) = \frac{1}{\lambda_0(1-\gamma)}{\left[x^{1-\gamma} - M^{1-\gamma}\right]},
\end{equation}
and
\begin{equation}
\forall\, y\ge 0,\qquad G^{-1}(y) = {\left[M^{1-\gamma} + \lambda_0(1-\gamma)y\right]}^{\frac{1}{1-\gamma}}.
\end{equation}
Therefore,
\begin{equation}
\forall\, (t,x) \in Z_M,\qquad Q(t,x) = {\left[x^{1-\gamma} - \lambda_0(1-\gamma) t\right]}^{\frac{1}{1-\gamma}},
\end{equation}
and
\begin{equation}
\forall\, (t,q) \in \mathbb{R}_+ \times [M,+\infty),\qquad  X(t,q) = {\left[q^{1-\gamma} + \lambda_0(1-\gamma)t\right]}^{\frac{1}{1-\gamma}}.
\end{equation}
The domain $Z_M$ is illustrated in Figure~\ref{fig:domain-ZM}, for the parameters used in~\cite{ovcharenko2003gmic++}, namely $\gamma = 1/3$, $D = 1.83\times 10^{-13}\,m^2/s$, $V_\mathrm{at} = 1.205\times 10^{-29}\, m^{3}$, $T = 823\, K$, $E_\mathrm{v}^\mathrm{f} = 1.7\, eV$ and $M = 20$.
\begin{figure}
\centering
\includegraphics[width=0.6\textwidth]{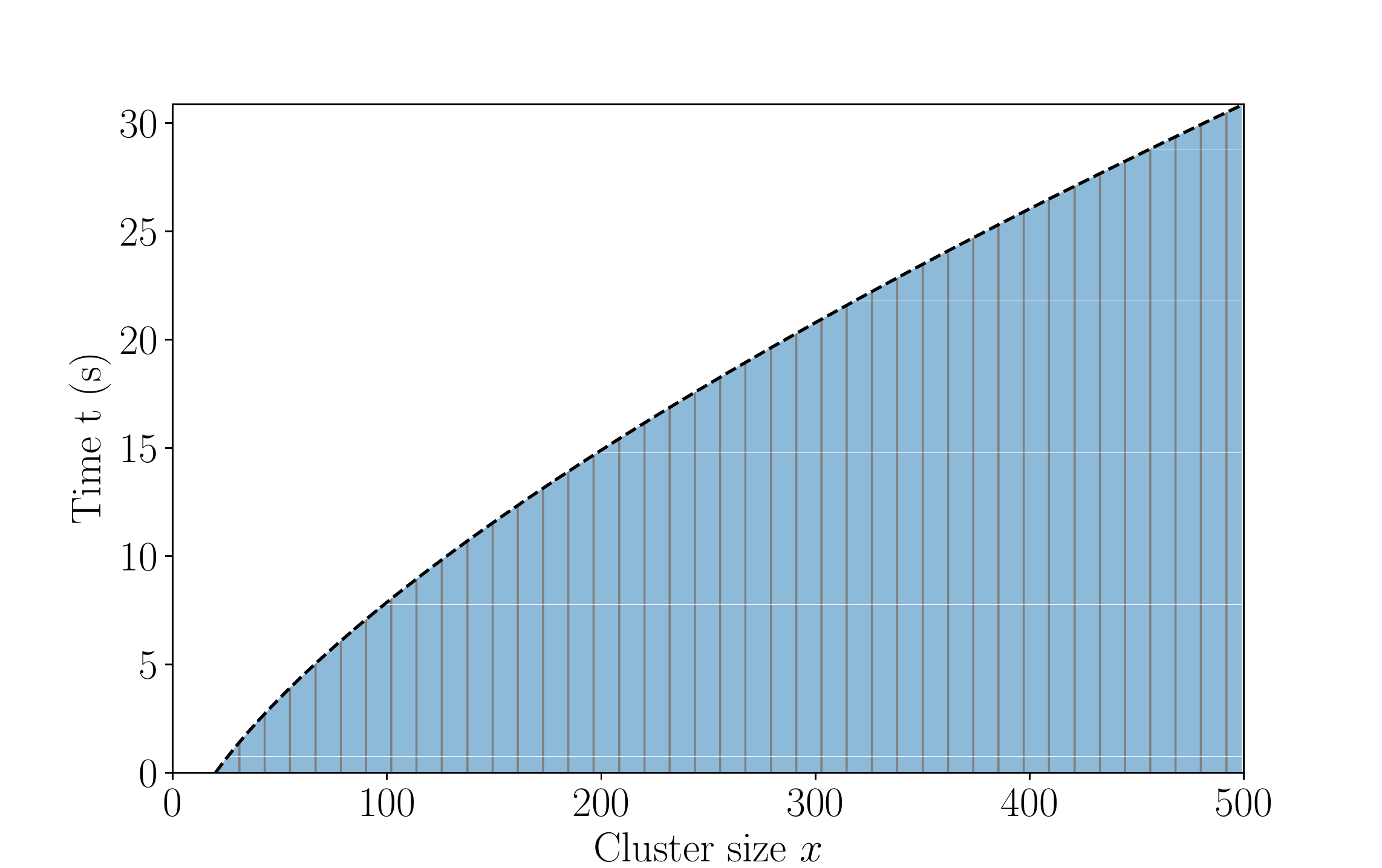}
\caption{The dashed line represents the function $G$ so that $Z_M$ is the hatched domain for the parameters of Example~\ref{expl:alpha-beta-param}.}
\label{fig:domain-ZM}
\end{figure}
\end{expl}

\subsubsection[Heuristic reformulation]{Heuristic reformulation of~\eqref{eq:FK_0}}
Let us now reformulate the Fokker--Planck equation as a diffusion equation without advection with the change of variable introduced in~\eqref{eq:CFP2Cdiff}. In order to obtain such an equation, we calculate the partial derivatives of $\Cdiff$. Let us first notice that
\begin{equation}
\forall\, (t,q) \in \mathbb{R}_+ \times [M,+\infty),\qquad  \frac{\partial X}{\partial q}{\left(t,q\right)} = \frac{F{\left[X(t,q)\right]}}{F(q)}.
\end{equation}
The ratio on the right-hand side is well defined since $F(q) > 0$ for all $q \ge M$. By the chain rule, and assuming that $\Cdiff$ is smooth (a property which will be proved later on in Section~\ref{sec:thm-decay}), it holds, for all $(t,q) \in \mathbb{R}_+ \times [M,+\infty)$,
\begin{equation}
\frac{\partial \Cdiff}{\partial q}(t,q) = \frac{F{\left[X(t,q)\right]}}{F(q)}\frac{\partial \CFP}{\partial x}(t,X(t,q)),
\end{equation}
and
\begin{equation}
\frac{\partial^2 \Cdiff}{\partial q^2}(t,q) = \frac{F^2{\left[X(t,q)\right]}}{F^2(q)} \frac{\partial^2 \CFP}{\partial x^2}(t,X(t,q)) +  \frac{F{\left[X(t,q)\right]}(F^\prime{\left[X(t,q)\right]} - F^\prime(q))}{F^2(q)} \frac{\partial \CFP}{\partial x}(t,X(t,q)).
\end{equation}
Given~\eqref{eq:limit-XandQ} and using Assumption~\ref{hypo:alpha-beta-prime}, we obtain that, in the limit $q\to +\infty$,
\begin{equation}
\frac{F{\left[X(t,q)\right]}(F^\prime{\left[X(t,q)\right]}-F^\prime(q))}{F^2(q)} = \mathrm{O}{\left(\frac{1}{q}\right)}, \qquad \frac{F^2(X(t,q))}{F^2(q)} \sim 1.
\end{equation}
We then make the assumption, which will be proved to hold as a consequence of Theorem~\ref{thm:regularity}, that, as $q\to +\infty$,
\begin{equation}
\label{eq:hypo-CFP-1}
\frac{1}{q}\frac{\partial \CFP}{\partial x}(t,X(t,q)) \ll \frac{\partial^2 \CFP}{\partial x^2}(t,X(t,q)).
\end{equation}
Then, for $q$ large,
\begin{equation}
\frac{\partial^2 \Cdiff}{\partial q^2}(t,q) \simeq \frac{\partial^2 \CFP}{\partial x^2}(t,X(t,q)).
\end{equation}
Assuming further that
\begin{equation}
\label{eq:hypo-CFP-2}
\frac{1}{q}\CFP(t,X(t,q)) \ll \frac{\partial \CFP}{\partial x}(t,X(t,q)),
\end{equation}
as $q\to +\infty$, which will also be proved to hold later on, we obtain with Assumption~\ref{hypo:alpha-beta-prime} that
\begin{equation}
\label{eq:estimate-advection}
\frac{\partial (F\CFP)}{\partial x}(t,X(t,q)) \simeq F{\left[X(t,q)\right]} \frac{\partial \CFP}{\partial x}(t,X(t,q)),
\end{equation}
and
\begin{equation}
\frac{\partial^2 (D\CFP)}{\partial x^2}(t,X(t,q)) \simeq D{\left[X(t,q)\right]}\frac{\partial^2\CFP}{\partial x^2}(t,X(t,q)) \simeq D(q)\frac{\partial^2\CFP}{\partial x^2}(t,X(t,q)).
\end{equation}
We finally consider the time derivative of $\Cdiff$ and combine the previous results in order to write the diffusion equation satisfied by $\Cdiff$. Since
\begin{equation}
\frac{\partial \Cdiff}{\partial t}(t,q) = \frac{\partial \CFP}{\partial t}(t,X(t,q)) + F{\left[X(t,q)\right]}\frac{\partial \CFP}{\partial x}(t,X(t,q)),
\end{equation}
we formally obtain that $\Cdiff$ is the solution of the following diffusion equation for large $q$:
\begin{equation}
\label{eq:diffusion-equation}
\frac{\partial \Cdiff}{\partial t}(t,q) = \frac{1}{2} \sigma^2(q) \frac{\partial^2 \Cdiff}{\partial q^2}(t,q),
\end{equation}
with diffusion coefficient
\begin{equation}
\label{eq:sigma-heuristic}
\sigma^2(q) = D(q).
\end{equation}

\subsection{Decay estimates of the solution of the diffusion equation}
\label{sec:thm-decay}

The diffusion equation~\eqref{eq:diffusion-equation} allows to relate the ODEs of BD~\eqref{eq:EDO-Vacancy} with fixed $C_1$ and the Fokker--Planck equation~\eqref{eq:Fokker--Planck}. However, before we state more precisely this result, we first need to present some results on the decay of the spatial derivatives of the solution of this diffusion equation, which will allow us to make rigorous the heuristic derivation of Section~\ref{sec:formal-results}. Consider the following Cauchy problem:
\begin{equation}
{\left\{
\begin{aligned}
&\frac{\partial \Cdiff}{\partial t} = \frac{1}{2}\sigma^2(q) \frac{\partial^2 \Cdiff}{\partial q^2},\\
&\Cdiff(0,q) = \Cdiff_0(q).
\end{aligned}
\right.}
\label{problem:Cauchy-diffusion}
\tag{P-Diff}
\end{equation}
Assumptions on the initial condition $\Cdiff_0$ will be made precise hereafter. To our knowledge, the decay of the spatial derivatives of the solutions to~\eqref{problem:Cauchy-diffusion} has never been studied in the case of an unbounded diffusion coefficient. We propose here a stochastic approach to this end, as long as $\sigma$ and $\Cdiff_0$ satisfy sufficient conditions of growth and regularity.

The main difficulty in giving decay estimates of the solution of such a problem comes from the fact that the diffusion coefficient $\sigma$ is not bounded, so that it does not satisfy some parabolic condition as in~\cite[Chapter~1.1]{friedman2012stochastic}. While Hörmander's theorem (see~\cite[Theorem 1.3]{hairer2011hormander}) ensures the existence and uniqueness of a smooth solution, for a whole class of diffusion coefficients (positive with bounded derivatives on the whole space, see Assumption~\ref{hypo:sigma-diffusion}), it does not provide decay estimates on the solution. In this section, we first discuss the form of $\sigma$ as defined in~\eqref{eq:sigma-heuristic}, before stating decay estimates on the solution of~\eqref{problem:Cauchy-diffusion}.

\subsubsection{Characterization of the coefficient $\sigma$}

Since we want to prove the correctness of the Fokker--Planck approximation in the asymptotic limit $M\to +\infty$, where $M$ is the minimal size of a cluster, we only need to control the spatial derivatives of the solution when the space variable goes to infinity. While the expression~\eqref{eq:sigma-heuristic} holds true only on $[M,+\infty)$, the use of stochastic tools requires $\sigma$ to be defined on the whole space. In order to guarantee the existence and uniqueness of the solution to the problem~\eqref{problem:Cauchy-diffusion}, we require that $\sigma \in \mathcal{C}^\infty(\mathbb{R}_+\times\mathbb{R})$ is positive with bounded derivatives (which is guaranteed by Assumption~\ref{hypo:sigma-diffusion} below). Let us now give an expression of $\sigma$ in a simple case, which will give us a useful guideline for the following.
\begin{expl}
\label{expl:sigma-simple}
Fix $C_1 > 0$ and consider the coefficients $\alpha$ and $\beta$ defined in Example~\ref{expl:alpha-beta-continus}. Then, for all $q\ge 1$,
\begin{equation}
D(q) = \beta_0 C_1 q^\gamma + \alpha_0 (q-1)^\gamma\exp{\left(\frac{\omega}{q^\gamma}\right)} = q^\gamma {\left(\beta_0 C_1 + \alpha_0 {\left(1-\frac{1}{q}\right)}^\gamma \exp{\left(\frac{\omega}{q^\gamma}\right)}\right)}.
\end{equation}
Therefore, $\sigma$ writes as $\sigma(q) = \sigma_0(q)\sigma_\mathrm{b}(q)$ where $\sigma_0(q) = q^{\gamma/2}$ and $\sigma_\mathrm{b}$ is bounded with bounded derivatives. In fact, the derivative of order $k$ of $\sigma_\mathrm{b}$ asymptotically decays as $q^{-k}$. The function $\sigma_0$ represents the main difficulty of our problem since it is not bounded.
\end{expl}
As suggested in Example~\ref{expl:sigma-simple}, and in view of Assumption~\ref{hypo:alpha-beta-prime}, we assume that $\sigma$ can be written as $\sigma(q) = \sigma_0(q)\sigma_\mathrm{b}(q)$, with $\sigma^2_0(q) = q^\gamma$ for $q \ge M$, and $\sigma_\mathrm{b}$ a smooth positive bounded function on $[M,+\infty)$. In fact, in view of Assumption~\ref{hypo:alpha-beta-prime}, $\sigma_\mathrm{b}$ is automatically bounded since there exist $K_{D,-}, K_{D,+} \in \mathbb{R}_+^*$ such that $K_{D,-} q^\gamma \le D(q) \le K_{D,+} q^\gamma$ for $q\ge M$, so that
\begin{equation}
\forall q\ge M, \qquad K_{D,-} \le \sigma_\mathrm{b}^2(q) := \frac{D(q)}{q^\gamma} \le K_{D,+}.
\end{equation}
Moreover, in view of Assumption~\ref{hypo:alpha-beta-prime}, we also obtain estimates on the derivatives of $\sigma$, up to second order. In the next section, in order to obtain general results, we assume bounds on derivatives of all order for $\sigma_\mathrm{b}$.

\subsubsection[Decay estimates of the solution to the diffusion equation]{Decay estimates of the solution of~\eqref{problem:Cauchy-diffusion}}

We present in this section two results concerning the decay of the solution of the Cauchy problem~\eqref{problem:Cauchy-diffusion}. Let us emphasize that the results are stated on the whole space $\mathbb{R}_+ \times \mathbb{R}$ for the Cauchy problem~\eqref{problem:Cauchy-diffusion}. Our assumptions on $\sigma$ are the following.
\begin{hypo}
\label{hypo:sigma-diffusion}
The diffusion coefficient $\sigma$ is a smooth positive function with bounded derivatives. Moreover,
$\sigma(q) = \sigma_0(q) \sigma_\mathrm{b}(q)$,
with $\sigma_0(q) = |q|^{\gamma/2}$ for $|q|\geq 1$ for some $0\le\gamma\le 1/2$; and $\sigma_\mathrm{b}$ is a bounded smooth function with bounded derivatives for which there exist $\delta_-, \delta_+ > 0$ such that $\delta_- \le \sigma_\mathrm{b}(q) \le \delta_+$. Finally, for any $n\ge 1$, there exists $S_n \in \mathbb{R}_+$ such that ${\left| \sigma^{(n)}(q) \right|} \le S_n {\left|q\right|}^{\gamma/2-n}$ for $|q|\geq 1$.
\end{hypo}
Note that the function $\sigma$ of Example~\ref{expl:sigma-simple} satisfies Assumption~\ref{hypo:sigma-diffusion}. We also need assumptions on the initial condition $\Cdiff_0$.
\begin{hypo}
\label{hypo:C0}
The initial condition $\Cdiff_0$ is a smooth bounded function. Moreover, for all $n\ge 1$, there is a constant $R_n \in \mathbb{R}_+$ such that ${\left\| \Cdiff_0^{(n)} \sigma_0^n \right\|}_{\mathcal{C}^0} \le R_n$.
\end{hypo}
Note that functions in $\mathcal{S}(\mathbb{R})$, the Schwartz space of rapidly decreasing functions, satisfy Assumption~\ref{hypo:C0}. We are then in position to prove the following result.
\begin{thm}
\label{thm:regularity}
Fix an initial condition $\Cdiff_0$ satisfying  Assumption~\ref{hypo:C0}, and suppose that Assumption~\ref{hypo:sigma-diffusion} holds. Then, there exists a unique classical solution of~\eqref{problem:Cauchy-diffusion}, which is smooth and bounded. Moreover, for all $n\ge 1$, there exists  a non-negative function $K_n \in \mathcal{C}^0(\mathbb{R}_+)$ such that
\begin{equation}
\label{eq:decay-estimates}
\forall\, (t,q) \in \mathbb{R}_+ \times \mathbb{R},\qquad {\left| \frac{\partial^n \Cdiff}{\partial q^n}(t,q) \right|} \le K_n(t){\left|q\right|}^{-n\gamma/2}.
\end{equation}
\end{thm}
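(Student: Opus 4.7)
The plan is to use a Feynman--Kac stochastic representation of the solution, combined with careful estimates on the derivatives of the stochastic flow. Let $(W_t)_{t\ge 0}$ be a standard Brownian motion. Under Assumption~\ref{hypo:sigma-diffusion}, $\sigma$ is globally Lipschitz with bounded derivatives of all orders, so the SDE
\[
  dQ_t^q = \sigma(Q_t^q)\, dW_t, \qquad Q_0^q = q,
\]
admits a unique strong solution for every $q \in \mathbb{R}$. Setting $\Cdiff(t,q) := \mathbb{E}[\Cdiff_0(Q_t^q)]$, boundedness by $\|\Cdiff_0\|_\infty$ is immediate; smoothness follows from Kunita's theorem on stochastic flows; and It\^o's formula verifies that $\Cdiff$ is a classical solution of~\eqref{problem:Cauchy-diffusion}. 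Uniqueness among bounded classical solutions follows from the martingale characterization of the SDE (or from H\"ormander's theorem, as alluded to in the paper).

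The decay estimate~\eqref{eq:decay-estimates} is obtained by differentiating inside the expectation via Fa\`a di Bruno's formula: writing $J_t^{(i)} := \partial_q^i Q_t^q$,
\[
  \partial_q^n \Cdiff(t,q) = \sum_{k=1}^n \mathbb{E}\!\left[\Cdiff_0^{(k)}(Q_t^q)\, B_{n,k}\!\left(J_t^{(1)},\ldots,J_t^{(n-k+1)}\right)\right],
\]
where each monomial of the partial Bell polynomial $B_{n,k}$ is a product $J_t^{(i_1)}\cdots J_t^{(i_k)}$ with $i_1+\cdots+i_k = n$. The first flow derivative solves $dJ_t^{(1)} = \sigma'(Q_t^q)J_t^{(1)}dW_t$ with bounded coefficient, so all its moments are bounded uniformly in $q$. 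For $i\ge 2$, $J_t^{(i)}$ satisfies a linear SDE whose forcing involves $\sigma^{(j)}(Q_t^q)$ for $2\le j\le i$, which by Assumption~\ref{hypo:sigma-diffusion} decays like $|Q_t^q|^{\gamma/2 - j}$ at infinity. A bootstrap based on It\^o's formula and the Burkholder--Davis--Gundy inequality (using the subadditivity of $x\mapsto |x|^\gamma$) yields the \emph{a priori} bound $\mathbb{E}[(Q_t^q - q)^{2p}] \le C(t,p)|q|^{p\gamma}$, so for $|q|$ large the event $A := \{|Q_t^q - q| \le |q|/2\}$ has complementary probability of order $|q|^{p(\gamma - 2)}$, arbitrarily small since $\gamma\le 1/2 < 2$. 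Localizing on $A$, where $|Q_t^q|$ is comparable to $|q|$, upgrades the flow SDEs to yield $\mathbb{E}[|J_t^{(i)}|^p \mathbf{1}_A] \le C_{i,p}(t)|q|^{p(\gamma/2 + 1 - i)}$ for $i\ge 2$, along with $\mathbb{E}[|Q_t^q|^{-\alpha}\mathbf{1}_A] \le C_\alpha(t)|q|^{-\alpha}$; the $A^c$-contributions are absorbed by crude global bounds on $\Cdiff_0^{(k)}$, on $\sigma^{(j)}$ ($j\ge 1$) and on the $J_t^{(i)}$ in $L^p$, combined with the fast decay of $\mathbb{P}(A^c)$.

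Inserting Assumption~\ref{hypo:C0} into the expansion, a Bell monomial with $m$ indices equal to $1$ (and $k-m$ indices $\ge 2$) contributes at most $|q|^{-k\gamma/2}\cdot|q|^{(k-m)\gamma/2 + k - n} = |q|^{-m\gamma/2 + k - n}$; using the partition constraint $m\ge \max(0, 2k-n)$ and $\gamma\le 1/2$, a short arithmetic check shows this exponent is always $\le -n\gamma/2$, with equality attained only for the monomial $(J_t^{(1)})^n$ arising from $k=n$, which reproduces the advertised leading order. For bounded $|q|$ the estimate follows a fortiori from the smoothness and boundedness of $\Cdiff$. The main obstacle will be the bootstrap-plus-localization argument giving the sharp flow-derivative moment bounds together with concentration of $Q_t^q$ around $q$; both rely crucially on the subdiffusive scaling $\gamma\le 1/2$, which keeps Brownian fluctuations of size $\sqrt{t}\,|q|^{\gamma/2}$ negligible compared to $|q|$ and makes the Bell-polynomial combinatorics produce $|q|^{-n\gamma/2}$ as the genuine leading order without subleading interference.
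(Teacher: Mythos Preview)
Your approach is correct in spirit but takes a genuinely different route from the paper's. Both start from the Feynman--Kac representation, but the paper immediately applies a \emph{Lamperti transform}: setting $\varphi(x)=\int_0^x \sigma^{-1}$ and $Y_t=\varphi(X_t)$ turns the multiplicative-noise SDE into $dY_t=\Psi(Y_t)\,dt+dW_t$ with $\Psi=-\tfrac12\sigma'\circ\varphi^{-1}$. The gain is substantial: the tangent processes of $Y$ now satisfy \emph{ordinary} differential equations with bounded coefficients (since $\Psi$ has bounded derivatives of all orders), so they are trivially bounded pathwise. All the decay in $|q|$ is then produced by a single Fa\`a di Bruno step converting $\partial_y^n v$ back to $\partial_x^n u$ through the deterministic decay $\varphi^{(n)}(x)=\mathrm{O}(|x|^{-n+1-\gamma/2})$. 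By contrast, you keep the multiplicative noise and must therefore extract the decay from moment bounds on the flow derivatives $J^{(i)}$ via BDG, a concentration estimate for $Q_t^q$, and a localization argument. Your Bell-polynomial arithmetic is correct, and the bounds you state for $J^{(i)}$ are (looser versions of) what one actually obtains. What the paper's approach buys is that the stochastic analysis becomes essentially trivial---no BDG, no localization, no bootstrap---at the price of one clean change of variables; what yours buys is directness and perhaps easier generalization to settings where a Lamperti-type transform is unavailable.

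One point in your sketch does need tightening: the event $A=\{|Q_t^q-q|\le |q|/2\}$ constrains only the endpoint, whereas $J_t^{(i)}$ depends on $\sigma^{(j)}(Q_s^q)$ along the whole path $s\le t$. To make the localized moment bounds rigorous you should instead introduce the stopping time $\tau=\inf\{s:|Q_s^q-q|>|q|/2\}$, bound the stopped flow derivatives $J_{t\wedge\tau}^{(i)}$ (where the integrands are uniformly controlled), and use a maximal BDG inequality to get $\mathbb{P}(\tau\le t)\le C(t,p)|q|^{p(\gamma-2)}$. This is routine but should be stated, since as written the indicator $\mathbf 1_A$ does not commute with the stochastic integrals defining $J^{(i)}$.
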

Notice that the bound depends on time through the functions $K_n$. Typically $K_n$ grows exponentially in time. Therefore, this result is useful for estimates at finite times. The proof of this result, given in Appendix~\ref{sec:appendix-decay}, relies on stochastic techniques, the fundamental solution of~\eqref{problem:Cauchy-diffusion} being interpreted as the law of a stochastic process. Let us emphasize that the assumptions stated in~\eqref{eq:hypo-CFP-1} and~\eqref{eq:hypo-CFP-2} hold true in view of Theorem~\ref{thm:regularity} and Lemma~\ref{lemm:XandQ}. 
\begin{rmk}
Theorem~\ref{thm:regularity} can in fact be extended to $0\le \gamma < 1$. Moreover, a use of Malliavin's calculus~\cite{nualart2006malliavin} allows one to conclude for $\gamma = 1$ if $\sigma = \sigma_0$. Nonetheless, in order to be consistent with the remainder of our work, we limit ourselves to $0\le\gamma\le 1/2$.
\end{rmk}
\begin{rmk}
Cerrai~\cite[Chap. 1.5]{cerrai2001second} proves the existence of a unique smooth classical solution of~\eqref{problem:Cauchy-diffusion} assuming only $\sigma \in \mathcal{C}^3(\mathbb{R})$ with polynomial growth and an initial condition $\Cdiff_0 \in \mathcal{C}_b^2(\mathbb{R})$. Therefore, since we are only interested in the third spatial derivative of the solution of~\eqref{problem:Cauchy-diffusion}, and as a careful inspection of the proof in Appendix~\ref{sec:appendix-decay} shows, we could relax some assumptions on $\sigma$ and $\Cdiff_0$ and limit the assumptions on their derivatives up to order~$3$.
\end{rmk}

\subsection{Relating Becker--D\"oring equations and their Fokker--Planck approximation}
\label{sec:Diff2CDandFP}

We are now in position to rigorously relate the Fokker--Planck approximation~\eqref{eq:FK_0} and the BD equations~\eqref{eq:EDO-Vacancy}. This section is divided into two parts. We first present a result relating the diffusion problem~\eqref{problem:Cauchy-diffusion} and the Fokker--Planck approximation~\eqref{eq:FK_0} (see Section~\ref{sec:Diff2FP}); and then a result relating the diffusion problem and the BD equations (see Section~\ref{sec:Diff2CD}). We discuss the domain on which such results hold true and quantify the error arising from the approximations.

\subsubsection{From the diffusion equation to the Fokker--Planck equation}
\label{sec:Diff2FP}

Let us first relate the diffusion equation and the Fokker--Planck equation by proving that the solution of the diffusion equation satisfies up to a change of variable the Fokker--Planck equation, up to an error term whose magnitude we quantify.
\begin{thm}
\label{thm:Diff2FP}
Suppose that Assumptions~\ref{hypo:alpha-beta-prime}, \ref{hypo:sigma-diffusion} and~\ref{hypo:C0} hold true and denote by $\Cdiff \in \mathcal{C}^{\infty}(\mathbb{R}_+ \times \mathbb{R})$ the solution of the diffusion problem~\eqref{problem:Cauchy-diffusion} with initial condition $\Cdiff_0$. Define $\CFP \in \mathcal{C}^\infty(Z_M)$ as
\begin{equation}
\forall\, (t,x) \in Z_M,\qquad \CFP(t,x) = \Cdiff(t,Q(t,x)),
\end{equation}
where $Q$ is introduced in~\eqref{eq:def-Qtx}. Then,
\begin{equation}
\forall\, (t,x) \in Z_M,\qquad \frac{\partial\CFP}{\partial t} = -\frac{\partial (F\CFP)}{\partial x} + \frac{1}{2}\frac{\partial^2 (D\CFP)}{\partial x^2} +  R_{\Cdiff}(t,x),
\end{equation}
where there exists a non-negative function $K\in\mathcal{C}^0(\mathbb{R}_+)$ such that
\begin{equation}
\forall\, (t,x) \in Z_M,\qquad {\left| R_{\Cdiff}(t,x) \right|} \le K(t) x^{\gamma-1}.
\end{equation}
\end{thm}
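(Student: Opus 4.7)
The plan is a direct chain-rule computation: I evaluate each term of $\partial_t \mathscr{C} + \partial_x(F\mathscr{C}) - \tfrac12 \partial_x^2(D\mathscr{C})$ in terms of $\mathfrak{C}$ and its derivatives at $(t,Q(t,x))$, invoke the diffusion equation $\partial_t \mathfrak{C} = \tfrac12 D(q)\,\partial_q^2 \mathfrak{C}$ (valid since $Q(t,x) \geq M$ for $(t,x)\in Z_M$, where $\sigma^2 = D$ by~\eqref{eq:sigma-heuristic}), and read off the remainder $R_\mathfrak{C}$. The key preparatory identities are $\partial_t Q = -F(Q)$ and $\partial_x Q = F(Q)/F(x)$, both immediate from $Q(t,x) = G^{-1}(G(x)-t)$ and $g = 1/F$. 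They give $\partial_t \mathscr{C} = \tfrac12 D(Q)\,\mathfrak{C}_{qq} - F(Q)\,\mathfrak{C}_q$ and $\partial_x \mathscr{C} = (F(Q)/F(x))\,\mathfrak{C}_q$; a second differentiation yields an expression for $\partial_x^2 \mathscr{C}$ in which $\mathfrak{C}_{qq}$ is multiplied by $(F(Q)/F(x))^2$ and $\mathfrak{C}_q$ is multiplied by $F(Q)(F'(Q)-F'(x))/F(x)^2$.

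Substituting into the Fokker--Planck combination, the transport piece $-F(Q)\mathfrak{C}_q$ coming from $\partial_t \mathscr{C}$ cancels exactly the $F(Q)\mathfrak{C}_q$ coming from $\partial_x(F\mathscr{C})$, leaving the explicit formula $R_\mathfrak{C}(t,x) = \tfrac12 D(x)[R_{D,1}(t,x) - R_{F,2}(t,x)]\,\mathfrak{C}_{qq} - \bigl[D'(x)F(Q)/F(x) + \tfrac12 D(x)F(Q)(F'(Q)-F'(x))/F(x)^2\bigr]\,\mathfrak{C}_q + [F'(x) - \tfrac12 D''(x)]\,\mathscr{C}$, with $R_{D,1}, R_{F,2}$ as in Lemma~\ref{lemm:R_F} and all derivatives of $\mathfrak{C}$ evaluated at $(t,Q(t,x))$. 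This is the core identity; the rest of the proof is devoted to bounding the three blocks.

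Each block is controlled by combining Theorem~\ref{thm:regularity} with Assumption~\ref{hypo:alpha-beta-prime} and Lemma~\ref{lemm:R_F}. From Theorem~\ref{thm:regularity}, $|\partial_q^n \mathfrak{C}(t,Q)| \leq K_n(t)\,Q^{-n\gamma/2}$; and from the lower bound $Q(t,x) \geq x - tK_+ x^\gamma$ obtained along the proof of Lemma~\ref{lemm:XandQ} (together with $Q \geq M$ on $Z_M$), this translates into an $O(x^{-n\gamma/2})$ bound with a constant continuous in~$t$. The $\mathfrak{C}_{qq}$-block is then $O(x^{-\gamma})\cdot O(x^\gamma)\cdot O(x^{\gamma-1}) = O(x^{\gamma-1})$, using $D(x)=O(x^\gamma)$ and the Lemma~\ref{lemm:R_F} bounds on $R_{D,1}, R_{F,2}$. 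The $\mathfrak{C}_q$-block is handled using $D'(x) = O(x^{\gamma-1})$, $F(Q)/F(x) = 1+R_{F,1} = O(1)$, and a Taylor expansion $F'(Q)-F'(x) = (Q-x)F''(\xi)$ with $|Q-x| = O(x^\gamma)$ and $F''(\xi) = O(x^{\gamma-2})$, giving $O(x^{2\gamma-2})$; after multiplication by $\mathfrak{C}_q = O(x^{-\gamma/2})$ one obtains at worst $O(x^{\gamma/2 - 1})$, absorbed into $O(x^{\gamma-1})$ for $\gamma\geq 0$. The $\mathscr{C}$-block is $O(x^{\gamma-1})$ since $\mathscr{C}$ is bounded by Theorem~\ref{thm:regularity}, $F'(x)=O(x^{\gamma-1})$ and $D''(x)=O(x^{\gamma-2})$.

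The main obstacle is purely algebraic bookkeeping: there are many $O$-factors to track, and one must verify that every remaining exponent of $x$ is at most $\gamma-1$ and that every constant depends continuously on~$t$. The ceiling $\gamma\leq 1/2$ in Assumption~\ref{hypo:alpha-beta-prime} is used repeatedly — it makes $Q(t,x)/x\to 1$ sharp enough for the substitution of $Q$-powers by $x$-powers, it ensures $2\gamma-2 \leq \gamma-1$ so the Taylor remainder in the $\mathfrak{C}_q$-block is absorbed, and it keeps $(Q-x)F''$ of the right order. Once the exponents are verified, the continuous time dependence of the constant $K(t)$ follows directly from the corresponding continuous dependences provided by Theorem~\ref{thm:regularity} and Lemma~\ref{lemm:R_F}.
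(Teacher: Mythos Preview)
Your proposal is correct and follows essentially the same route as the paper: a direct chain-rule computation using $\partial_t Q=-F(Q)$ and $\partial_x Q=F(Q)/F(x)$, leading to the same explicit three-block expression for $R_{\mathfrak{C}}$ (up to the sign convention, which you match to the statement's $+R_{\mathfrak{C}}$), and the same term-by-term bounds via Theorem~\ref{thm:regularity}, Lemma~\ref{lemm:R_F} and Assumption~\ref{hypo:alpha-beta-prime}. The only minor remark is that the inequality $2\gamma-2\le\gamma-1$ you invoke requires only $\gamma\le 1$, not the full $\gamma\le 1/2$; but this does not affect the argument.
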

The estimates we obtain for $R_\Cdiff$ on $Z_M$ show that the error arising from the reformulation of~\eqref{eq:FK_0} as~\eqref{problem:Cauchy-diffusion} becomes smaller as the size of the clusters increases. In practice, since we do not obtain lower bounds on ${\left|R_\Cdiff\right|}$, we however cannot ensure whether $R_\Cdiff$ is negligible in front of $\partial_x (F\mathscr{C})$ and $\partial_{xx}(D\mathscr{C})$. 

\subsubsection{From the diffusion equation to the Becker--D\"oring equations}
\label{sec:Diff2CD}

We now give a result relating the diffusion equation and the BD equations, up to a small error term which can be quantified. Since we work with discrete variables, we consider the discrete version $\mathcal{Z}_M$ of the space $Z_M$ defined in~\eqref{eq:Z_M}:
\begin{equation}
\mathcal{Z}_M = {\left\{ (t,n) \in \mathbb{R}_+\times\mathbb{N}\ \middle| \ n \ge M\ \text{and}\ t \le G(n) \right\}},
\end{equation}
on which the following approximation holds.
\begin{thm}
\label{thm:Diff2CD}
Suppose that Assumptions~\ref{hypo:alpha-beta-prime}, \ref{hypo:sigma-diffusion} and~\ref{hypo:C0} hold true and denote by $\Cdiff \in \mathcal{C}^{\infty}(\mathbb{R}_+ \times \mathbb{R})$ the solution of the diffusion problem~\eqref{problem:Cauchy-diffusion} with initial condition $\Cdiff_0$. Fix an integer $n_0 \ge M$ and a time $t_{n_0}^* = G(n_0)$. Consider the sequence of smooth functions $\widehat{C} = {\left(\widehat{C}_{n_0},\cdots,\widehat{C}_n,\cdots\right)}$ defined, for all $n\ge n_0$, by
\begin{equation}
\forall\, t \in [0, t_{n_0}^*],\qquad \Ccluster_n(t) = \Cdiff{\left(t,Q(t,n)\right)},
\end{equation}
where $Q$ is defined in~\eqref{eq:def-Qtx}. Then, there exists a non-negative function $K\in\mathcal{C}^0(\mathbb{R}_+)$ such that, for all $n\ge n_0$,
\begin{equation}
\label{problem:P1-diffusion}
\forall t \in [0,t_{n_0}^*], \qquad \frac{d\widehat{C}_n}{dt} = \beta_{n-1}\widehat{C}_{n-1} C_1 - {\left(\beta_n C_1 + \alpha_n\right)} \widehat{C}_n + \alpha_{n+1}\widehat{C}_{n+1} + R_n,
\end{equation}
where the remainder $R_n \in \mathcal{C}^0([0,t^*_{n_0}],\mathbb{R})$ satisfies
\begin{equation}
\forall\, (t,n) \in \mathcal{Z}_{n_0},\qquad {\left| R_n(t) \right|} \le \frac{K(t)}{n^{\gamma/2}}.
\end{equation}
\end{thm}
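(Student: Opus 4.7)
The strategy is to combine Theorem~\ref{thm:Diff2FP}, which already provides a Fokker--Planck equation for $\mathscr{C}(t,x) = \mathfrak{C}(t, Q(t,x))$ up to a controlled error, with a reverse Taylor expansion of the kind used in Section~\ref{sec:standard-approx} to pass from the discrete BD right-hand side to its Fokker--Planck approximation. Since $\widehat{C}_n(t) = \mathscr{C}(t,n)$ by construction, differentiating in time and applying Theorem~\ref{thm:Diff2FP} at the integer point $x = n$ gives
\begin{equation*}
\frac{d\widehat{C}_n}{dt}(t) = -\frac{\partial (F\mathscr{C})}{\partial x}(t,n) + \frac{1}{2} \frac{\partial^2 (D\mathscr{C})}{\partial x^2}(t,n) + R_{\mathfrak{C}}(t,n),
\end{equation*}
with $|R_{\mathfrak{C}}(t,n)| \leq K(t)\, n^{\gamma-1}$. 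The task is then to recognize the BD right-hand side $\beta_{n-1}\widehat{C}_{n-1} C_1 - (\beta_n C_1 + \alpha_n)\widehat{C}_n + \alpha_{n+1}\widehat{C}_{n+1}$ at the expense of a second remainder.

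To this end, introduce $f(x) = \alpha(x)\mathscr{C}(t,x)$ and $h(x) = \beta(x)\mathscr{C}(t,x)$, and write Taylor expansions with Lagrange remainder of order three:
\begin{equation*}
f(n+1) - f(n) = f'(n) + \tfrac{1}{2} f''(n) + \tfrac{1}{6} f'''(\xi_+), \qquad h(n-1) - h(n) = -h'(n) + \tfrac{1}{2} h''(n) - \tfrac{1}{6} h'''(\xi_-),
\end{equation*}
with $\xi_+ \in [n,n+1]$ and $\xi_- \in [n-1,n]$. Using $F = \beta C_1 - \alpha$ and $D = \beta C_1 + \alpha$, the BD right-hand side at index $n$ is exactly $-\partial_x(F\mathscr{C})(t,n) + \tfrac{1}{2}\partial_x^2(D\mathscr{C})(t,n) + \tfrac{1}{6} f'''(\xi_+) - \tfrac{C_1}{6} h'''(\xi_-)$. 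Subtracting from the identity above yields
\begin{equation*}
R_n(t) = R_{\mathfrak{C}}(t,n) - \tfrac{1}{6} f'''(\xi_+) + \tfrac{C_1}{6} h'''(\xi_-),
\end{equation*}
and the theorem reduces to showing $|f'''(\xi_\pm)|, |h'''(\xi_\pm)| \leq K(t)\, n^{-\gamma/2}$ uniformly on $\mathcal{Z}_{n_0}$.

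The core estimates rely on Theorem~\ref{thm:regularity} combined with the change of variable. By the chain rule, $\partial_x \mathscr{C}(t,x) = \partial_q \mathfrak{C}(t,Q(t,x))\, \partial_x Q(t,x)$, where $\partial_x Q = F(Q)/F(x)$ is bounded and equal to $1 + O(x^{\gamma-1})$ on $\mathcal{Z}_{n_0}$ by Lemma~\ref{lemm:R_F}. Differentiating further and using Assumption~\ref{hypo:alpha-beta-prime} to control $F', F''$, one checks inductively that $|\partial_x^k \mathscr{C}(t,x)| \leq \widetilde{K}_k(t)\, x^{-k\gamma/2}$ for $k = 0, 1, 2, 3$, since the decay $|\partial_q^k \mathfrak{C}(t,q)| \leq K_k(t) q^{-k\gamma/2}$ from Theorem~\ref{thm:regularity} is preserved under the near-identity change of variable $Q$. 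Expanding $f''' = \alpha'''\mathscr{C} + 3\alpha''\mathscr{C}' + 3\alpha'\mathscr{C}'' + \alpha \mathscr{C}'''$ via the Leibniz rule, together with $\alpha^{(k)}(x) = O(x^{\gamma-k})$ (analogous for $\beta$), the dominant contribution is the term $\alpha \mathscr{C}'''$, of size $O(x^{\gamma} \cdot x^{-3\gamma/2}) = O(x^{-\gamma/2})$. All other terms in $f'''$ and $h'''$ decay at least as fast.

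The main obstacle is the careful bookkeeping of the derivatives of the change of variable $Q$: while $\partial_x Q$ is close to one, higher derivatives involve $F'(Q) - F'(x)$ and $F''$ evaluated at intermediate points, and one must verify that these contributions are subdominant in all terms appearing in $\partial_x^k \mathscr{C}$ for $k = 2, 3$. Once these estimates are assembled, since $\xi_\pm \in [n-1,n+1]$ and the bounds are polynomial in the space variable, one obtains $|f'''(\xi_\pm)|, |h'''(\xi_\pm)| \leq K(t)\, n^{-\gamma/2}$ on $\mathcal{Z}_{n_0}$. Combined with the inequality $\gamma - 1 \leq -\gamma/2$ (valid for $\gamma \leq 2/3$, hence in particular for $\gamma \leq 1/2$), this absorbs the contribution of $R_{\mathfrak{C}}(t,n)$ into a bound of the same order, proving the estimate $|R_n(t)| \leq K(t)\, n^{-\gamma/2}$.
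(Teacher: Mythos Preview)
Your route is conceptually valid but genuinely different from the paper's. You pass through Theorem~\ref{thm:Diff2FP} and Taylor expand $f=\alpha\mathscr{C}$, $h=\beta\mathscr{C}$ in the physical variable~$x$. The paper never invokes the Fokker--Planck equation here: it works entirely in the $q$-variable, first showing $Q(t,n+1)-Q(t,n)=1+O(n^{\gamma-1})$, then Taylor expanding $\mathfrak{C}(t,Q(t,n\pm1))$ about $Q(t,n)$ to third order (all high-order derivatives falling on~$\mathfrak{C}$, controlled by Theorem~\ref{thm:regularity}), while $\alpha_{n+1}=\alpha(n)+\alpha'(\xi_n)$ is expanded only to \emph{first} order. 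The time derivative $d\widehat{C}_n/dt$ is computed directly from the chain rule and the diffusion equation, yielding $-F(Q)\partial_q\mathfrak{C}+\tfrac12 D(Q)\partial_q^2\mathfrak{C}$ on the nose. Your approach has the advantage of recycling Theorem~\ref{thm:Diff2FP}; the paper's is more self-contained and, as explained next, requires strictly less of~$\alpha,\beta$.

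There is one concrete gap in your version. The third-order Lagrange remainder for $f=\alpha\mathscr{C}$ forces you to bound $f'''=\alpha'''\mathscr{C}+3\alpha''\partial_x\mathscr{C}+3\alpha'\partial_x^2\mathscr{C}+\alpha\,\partial_x^3\mathscr{C}$, and hence $\alpha'''$ (likewise $\beta'''$). But Assumption~\ref{hypo:alpha-beta-prime} only provides $\alpha^{(k)},\beta^{(k)}=O(x^{\gamma-k})$ for $k\in\{0,1,2\}$; no decay is assumed on the third derivatives. Your estimates on $\partial_x^k\mathscr{C}$ for $k\le 3$ are fine (they only need $F',F''$ through $\partial_x^k Q$), but the Leibniz term $\alpha'''\mathscr{C}$ is uncontrolled under the stated hypotheses. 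The paper sidesteps this precisely because every third-order derivative lands on~$\mathfrak{C}$, not on~$\alpha$ or~$\beta$. Your argument can be repaired without strengthening the assumptions: expand $\mathscr{C}(t,n\pm1)$ to third order in~$x$, expand $\alpha_{n\pm1},\beta_{n\pm1}$ separately to at most second order, and reassemble the pieces; the mismatch with $\partial_x(F\mathscr{C})$ and $\tfrac12\partial_x^2(D\mathscr{C})$ is then $O(n^{\gamma/2-1})$, which is absorbed into $O(n^{-\gamma/2})$ since $\gamma\le 1$.
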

This result gives us an estimate of the error due to the approximation based on the diffusion equation. It shows that the approximation improves when the sizes of the cluster increase. Note that the approximation is only valid for a limited time $t_{n_0}^*$, depending on the minimal size $n_0$ of the clusters for which the Fokker--Planck approximation is considered. As the minimal size $n_0$ grows, the approximation stays valid for longer times. Nevertheless, in view of the splitting introduced in Section~\ref{sec:splitting}, the approximation only needs to hold true on a limited time step $\Delta t$. This gives us the minimal size one can chose, which is characterized by $n_0 = \max{\left(\lceil M\rceil ,G^{-1}(\Delta t)\right)}$. In practice $\lim_{\Delta t\to 0} G^{-1}(\Delta t) = M$, therefore the limitation on $n_0$ is characterized by the real number $M$ which ensures that $F$ is positive on $[M,+\infty)$.

\appendix

\section{Alternative proof for the uniqueness of BD}
\label{sec:appendix-well-posedness}

We show the uniqueness of the solution with an argument based on the dissipativity of the evolution operator. We start by studying the operator $A(b)$ when the parameter $b$ is fixed to a constant value in $\mathbb{R}_+$. Let us emphasize that the non-negativity of $b$ is crucial for proving the dissipativity of~$A(b)$ on~$\mathcal{H}$. We introduce the unbounded linear operators $\Alin^\alpha$ and $\Alin^\beta$ such that $A(b) = \Alin^\alpha + b \Alin^\beta$, and consider the domain 
$$
D(A(b)) = {\left\{ u \in \mathcal{H} \ \middle| \ \sum_{n\ge 1} (A(b) u)_n^2 < +\infty \right\}},
$$ 
which is dense since it contains $c_{00}$, the space of sequences which have only finitely many nonzero elements.
It is easy to check that $A(b)$ is closed (see~\cite{Terrier}).

Let us next introduce a sequence which naturally arises in the following analysis. Fix $\lambda > 0$ and define the sequence $(\delta_n^\lambda)_{n\ge 1}$ as 
\begin{equation}
\label{eq:delta-lambda}
\delta^\lambda_1 = \lambda + \beta_1 b,
\qquad
\delta^\lambda_n = \lambda + \beta_n b + \alpha_n - \cfrac{1}{4}\cfrac{(\beta_{n-1} b +\alpha_n)^2}{\delta^\lambda_{n-1}},
\end{equation}
which is well defined as long as $\delta_n^\lambda > 0$ for $n\ge 1$. We can then state the following result, valid in a setting more general than the one provided by Assumption~\ref{hypo:alpha-beta-discrete}, and which coincides in fact with the setting of~\cite[Theorem~2.1]{Laurencot2002becker}.
\begin{lemm}
\label{lemm:delta}
Suppose that there exists $B >0$ such that the nonnegative coefficients $\alpha_n,\beta_n$ satisfy $\alpha_{n+1}-\alpha_n \leq B$ and $\beta_{n+1}-\beta_n \geq -B$ for any $n \geq 1$. Then, for
\begin{equation}
\label{eq:lambda-0}
\lambda_b = \frac{1}{2}\max{\left( B{\left(1+b\right)}, \alpha_2\right)} > 0,
\end{equation}
the sequence $\delta^{\lambda_b}$ is well defined and satisfies the following lower bound:
\begin{equation}
\forall\, n \in \mathbb{N}^*,\qquad \delta^{\lambda_b}_n \ge \frac{1}{2}(\alpha_{n+1} + \beta_{n}b) > 0.
\end{equation}
\end{lemm}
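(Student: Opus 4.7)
The plan is to prove the two claims (well-definedness and the lower bound) simultaneously by induction on $n$, exploiting the fact that the target lower bound $\tfrac12(\alpha_{n+1}+\beta_n b)$ is essentially half of the quantity appearing in the numerator of the subtracted term in the recursion, which makes the induction close cleanly.

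For the base case $n=1$, we have $\delta_1^{\lambda_b} = \lambda_b + \beta_1 b > 0$, and since $\lambda_b \geq \tfrac12 \alpha_2$ and $\beta_1 b \geq \tfrac12 \beta_1 b$, the bound $\delta_1^{\lambda_b} \geq \tfrac12(\alpha_2 + \beta_1 b)$ is immediate.

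For the inductive step, assume $\delta_{n-1}^{\lambda_b} \geq \tfrac12(\alpha_n + \beta_{n-1} b) > 0$; in particular the recursion is well defined at step $n$. Plugging the lower bound into the denominator of the subtracted term yields
\begin{equation}
\frac{1}{4}\frac{(\beta_{n-1}b+\alpha_n)^2}{\delta_{n-1}^{\lambda_b}} \;\leq\; \frac{1}{2}(\alpha_n + \beta_{n-1}b),
\end{equation}
so that
\begin{equation}
\delta_n^{\lambda_b} \;\geq\; \lambda_b + \beta_n b + \alpha_n - \tfrac12(\alpha_n + \beta_{n-1} b) \;=\; \lambda_b + \tfrac12 \alpha_n + \tfrac12 \beta_n b + \tfrac12(\beta_n-\beta_{n-1})b.
\end{equation}
It remains to check that the right-hand side is bounded below by $\tfrac12(\alpha_{n+1}+\beta_n b)$, which reduces to the inequality
\begin{equation}
\lambda_b \;\geq\; \tfrac12(\alpha_{n+1}-\alpha_n) - \tfrac12(\beta_n - \beta_{n-1}) b.
\end{equation}
By the assumptions of the lemma, $\alpha_{n+1}-\alpha_n \leq B$ and $-(\beta_n-\beta_{n-1}) \leq B$, so the right-hand side is at most $\tfrac12 B(1+b)$, which is in turn dominated by $\lambda_b$ by definition. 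This closes the induction.

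There is no real obstacle here; the only delicate point is choosing the correct lower bound to carry through the induction. Taking $\tfrac12(\alpha_{n+1}+\beta_n b)$ works because it is exactly half of the numerator appearing at the next step of the recursion, allowing the quadratic bound to collapse to a linear one. The precise form of $\lambda_b$ then emerges naturally from controlling the first-order differences of $\alpha$ and $\beta$ via the hypothesis.
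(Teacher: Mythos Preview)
Your proof is correct and essentially identical to the paper's own argument: both proceed by induction, use the inductive lower bound to collapse the quadratic term $\tfrac14(\beta_{n-1}b+\alpha_n)^2/\delta_{n-1}^{\lambda_b}$ to the linear quantity $\tfrac12(\alpha_n+\beta_{n-1}b)$, and then invoke the one-sided difference bounds on $\alpha$ and $\beta$ together with the definition of $\lambda_b$ to close the step. The only cosmetic difference is an index shift (you go from $n-1$ to $n$, the paper from $n$ to $n+1$).
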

\begin{proof}
We proceed by induction. For $n = 1$, it holds $\delta^{\lambda_b}_1 = \lambda_b + \beta_1 b \ge (\alpha_2+\beta_1 b)/2 > 0$.
Assume now that $\delta^{\lambda_b}_n \ge \frac{1}{2}(\alpha_{n+1} + \beta_{n} b) > 0$ for some integer $n\ge 1$. Since $(\delta^{\lambda_b}_n)^{-1} \le \frac{2}{\alpha_{n+1}+\beta_n b}$, one obtains
\begin{equation}
\begin{aligned}
\delta^{\lambda_b}_{n+1} &\ge \lambda_b + (\alpha_{n+1} + \beta_{n+1} b) - \frac{1}{2}(\alpha_{n+1} + \beta_n b) \\
& = \lambda_b + \frac{1}{2}(\alpha_{n+2} + \beta_{n+1} b) + \frac{1}{2}(\alpha_{n+1} - \alpha_{n+2}) + \frac{b}{2}(\beta_{n+1}-\beta_n) \geq \frac{1}{2}(\alpha_{n+2} + \beta_{n+1} b) + {\left(\lambda_b - \frac{B}{2}(1+b) \right)},
\end{aligned}
\end{equation}
which concludes the proof.
\end{proof}
\begin{prop}
\label{prop:A-dissipative}
Suppose that Assumption~\ref{hypo:alpha-beta-discrete} holds. Then, the operator $A(b) - \lambda_b I$ is dissipative.
\end{prop}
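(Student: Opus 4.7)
The plan is to show that $\langle (A(b)-\lambda_b I) u, u \rangle \leq 0$ for every $u \in D(A(b))$. Since $c_{00}$ is dense in $D(A(b))$ and $A(b)$ is closed, it suffices to verify the inequality on finitely supported sequences and then extend by a truncation argument.

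First I would expand $\langle A(b) u, u \rangle$ from the matrix representation of $A(b)$. Up to the shift $-\lambda_b\|u\|^2$, the quadratic form splits into three pieces: a diagonal part, a nearest-neighbor cross part of the form $-\sum_{k\geq 1}(\beta_k b + \alpha_{k+1}) u_k u_{k+1}$, and a long-range cross part $-u_1\sum_{m\geq 2}(\alpha_m - \beta_m b) u_m$ coming from the non-standard first row of $A(b)$ (which encodes mass conservation). The $u_1^2$ coefficient $\lambda_b + 2\beta_1 b$ is then split as $(\lambda_b + \beta_1 b) + \beta_1 b = \delta_1^{\lambda_b} + \beta_1 b$, reserving $\beta_1 b\, u_1^2$ as slack to control the long-range contribution.

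The purely tridiagonal piece $\delta_1^{\lambda_b} u_1^2 + \sum_{n\geq 2}(\lambda_b + \beta_n b + \alpha_n)u_n^2 - \sum_{k\geq 1}(\beta_k b + \alpha_{k+1}) u_k u_{k+1}$ is handled by an iterative completion of squares. Setting $c_{n+1} = \beta_n b + \alpha_{n+1}$ and $r_n = c_{n+1}/(2\delta_n^{\lambda_b})$, the identity
\[
\delta_n^{\lambda_b}(u_n - r_n u_{n+1})^2 = \delta_n^{\lambda_b} u_n^2 - c_{n+1} u_n u_{n+1} + \tfrac{c_{n+1}^2}{4\delta_n^{\lambda_b}} u_{n+1}^2
\]
transfers a coefficient $c_{n+1}^2/(4\delta_n^{\lambda_b})$ from $u_n u_{n+1}$ into $u_{n+1}^2$, and by the recursion~\eqref{eq:delta-lambda} this leaves exactly $\delta_{n+1}^{\lambda_b}$ as the new diagonal coefficient. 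Summing shows that the tridiagonal piece equals $\sum_{n\geq 1}\delta_n^{\lambda_b}(u_n - r_n u_{n+1})^2$, which is non-negative since $\delta_n^{\lambda_b} > 0$ by Lemma~\ref{lemm:delta}.

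The main obstacle is to show that the reserved slack $\beta_1 b u_1^2$, together with the excess in the lower bound $\delta_n^{\lambda_b} \geq \tfrac{1}{2}(\alpha_{n+1} + \beta_n b)$ from Lemma~\ref{lemm:delta}, is enough to dominate the long-range cross terms. The plan is to bound each of them by a weighted Young inequality $2|u_1 u_m| \leq \tau_m u_1^2 + u_m^2/\tau_m$, choosing the weights $\tau_m$ so that the aggregate $u_1^2$ contribution is absorbed into $\beta_1 b u_1^2$ while the $u_m^2$ contributions are absorbed into the excess diagonal mass left over after the tridiagonal completion of squares. Balancing these two constraints on the $\tau_m$ is where all the analytic content lies; once this is carried out, passing to the limit on finitely supported $u$ by closedness of $A(b)$ yields dissipativity of $A(b) - \lambda_b I$ on the full domain.
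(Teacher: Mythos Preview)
Your iterative completion of squares on the tridiagonal part is exactly the paper's induction on $n$, rephrased: both produce the same recursion $\delta_{n+1}^{\lambda_b} = (\lambda_b+\beta_{n+1}b+\alpha_{n+1}) - c_{n+1}^2/(4\delta_n^{\lambda_b})$ and appeal to Lemma~\ref{lemm:delta} for positivity. The substantive difference is that you correctly isolate the long-range cross contribution $u_1\sum_{m\ge 2}(\alpha_m-\beta_m b)u_m$ coming from the non-standard first row of $A(b)$. The paper's inductive expansion in fact silently drops the corresponding term $\langle A(b)e_{n+1},e_1\rangle\,u_1 u_{n+1}=(\alpha_{n+1}-\beta_{n+1}b)u_1 u_{n+1}$ when passing from $c_{00}^n$ to $c_{00}^{n+1}$, so in this respect your outline is more careful than the paper's own argument.

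The gap is in your proposed control of that long-range piece. The reserved slack $\beta_1 b\,u_1^2$ vanishes at $b=0$, and there is no ``excess diagonal mass'' to harvest after the completion of squares: the recursion for $\delta_n^{\lambda_b}$ consumes the full diagonal coefficient at each step, so Lemma~\ref{lemm:delta} lower-bounds $\delta_n^{\lambda_b}$ itself, it does not leave anything unspent that you could put against $u_m^2$. In fact no choice of weights $\tau_m$ can rescue the argument, because the stated inequality is false for the given $\lambda_b$. Take $\beta_n\equiv 0$, $\alpha_n=n$, $b=0$ (so $B=1$ and $\lambda_0=\tfrac12\max(B,\alpha_2)=1$), and $u=2e_1+e_m$ with $m\ge 6$; then $A(0)u = m e_1 + m e_{m-1} - m e_m$, so $\langle A(0)u,u\rangle = 2m-m = m$ while $\lambda_0\|u\|^2=5$, giving $\langle(A(0)-\lambda_0 I)u,u\rangle=m-5>0$. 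The first-row coupling genuinely breaks dissipativity with this $\lambda_b$; a correct statement needs either a larger, size-dependent shift or a restriction of the quadratic form to the complement of $e_1$, and neither your Young-inequality scheme nor the paper's induction can succeed without such a modification.
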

\begin{proof}
  Consider the sequence space $c_{00}^n = {\left\{u \in \mathbb{R}^{\mathbb{N}^*}\  \middle| \ \forall\, k > n, u_k = 0 \right\}}$, composed of sequences whose non-vanishing coefficients are the first $n$ components. We prove by induction that, for all $u\in c_{00}$, it holds $\langle (A(b) - \lambda_b I)u,u\rangle \le 0$, and then conclude by a density argument. More precisely, we consider the following statement (for the sequence $\delta^{\lambda_b}_n$ defined in~\eqref{eq:delta-lambda}): 
\begin{equation}
P(n): \qquad \forall u \in c_{00}^n, \qquad  \langle (A(b)  - \lambda_b I) u , u \rangle \le - \delta^{\lambda_b}_n u_n^2 \le 0.
\end{equation}
This amounts to proving that the operator $P_n(A(b) - \lambda_b I) P_n$ is dissipative, where $P_n$ is the projection onto $c_{00}^n$ defined as $P_nu = (u_1,\cdots,u_n,0,\cdots)$. For the induction basis $n = 1$, one simply notes that, with $u = u_1 e_1 \in c_{00}^1$,
\begin{equation}
\langle (A(b)  - \lambda_b I)u , u \rangle = - (\lambda_b + 2\beta_1 b) u_1^2  = -(\lambda_b + \beta_1 b)u_1^2 - \beta_1 b u_1^2 \le - \delta^{\lambda_b}_1 u_1^2 \le 0.
\end{equation}
Assume now that $P(n)$ holds for some integer $n \ge 1$. Consider $u \in c_{00}^{n+1}$, with $u = \sum_{k=1}^{n+1} u_k e_k = \widehat{u_n} + u_{n+1} e_{n+1}$. Then,
\begin{equation}
\begin{aligned}
\langle (A(b)  - \lambda_b I)u , u \rangle &= \langle (A(b)  - \lambda_b I)\widehat{u_n} , \widehat{u_n} \rangle + \langle (A(b) - \lambda_b I) e_n , e_{n+1} \rangle u_{n+1}u_n \\
&\qquad  + \langle (A(b) - \lambda_b I) e_{n+1} , e_n \rangle u_{n+1}u_n  + \langle (A(b) - \lambda_b I) e_{n+1} , e_{n+1} \rangle u_{n+1}^2 \\
& = \langle (A(b) - \lambda_b I) \widehat{u_n} , \widehat{u_n} \rangle + (\beta_n b + \alpha_{n+1})u_{n+1}u_n - (\lambda_b + \beta_{n+1} b + \alpha_{n+1})u_{n+1}^2  \\
&\le -\delta^{\lambda_b}_n u_n^2 + (\beta_n b + \alpha_{n+1})u_{n+1}u_n - (\lambda_b + \beta_{n+1} b + \alpha_{n+1})u_{n+1}^2,
\end{aligned}
\label{AA*}
\end{equation}
using $P(n)$ with $\widehat{u_n} \in c_{00}^n$. Let $\mathcal{R}(u_n)$ be the second-order polynomial function defined for $u_{n+1}$ fixed as
\begin{equation}
\mathcal{R}(u_n) = -\delta^{\lambda_b}_n u_n^2 + (\beta_n b + \alpha_{n+1})u_{n+1}u_n - (\lambda_b + \beta_{n+1} b + \alpha_{n+1})u_{n+1}^2.
\end{equation}
Since $-\delta^{\lambda_b}_n \le 0$ (in view of Lemma~\ref{lemm:delta}), the maximum of $\mathcal{R}$ is attained for  $u_n^\mathrm{max}  = \frac{(\beta_n b + \alpha_{n+1})u_{n+1}}{2\delta^{\lambda_b}_n}$, so that
\begin{equation}
\mathcal{R}(u_n) \le \mathcal{R}{\left(u_n^\mathrm{max}\right)} = - (\lambda_b + \beta_{n+1} b + \alpha_{n+1})u_{n+1}^2 + \frac{1}{4}\frac{{\left(\beta_n b + \alpha_{n+1}\right)}^2}{\delta^{\lambda_b}_n}u_{n+1}^2 = -\delta^{\lambda_b}_{n+1}u_{n+1}^2.
\end{equation}
This shows that $P(n+1)$ holds. Since $c_{00}$ is dense in $\mathcal{H}$ and $A(b) - \lambda_b I$ is a closed operator, we can conclude that $A(b) - \lambda_b I$ is a dissipative operator.
\end{proof}

We are now in position to prove the uniqueness of the solution to the Cauchy problem~\eqref{problem:P3}.

\begin{proof}[Proof of Theorem~\ref{thm:long-time} -- Uniqueness]
  Let $u$ and $v$ be two solutions of~\eqref{problem:P3} with initial condition $u^0$. Then, $u-v$ is solution of
\begin{equation}
\frac{d(u-v)}{dt} = A(u_1)(u-v) - (A(v_1)-A(u_1))v =  A(u_1)(u-v) - (v_1-u_1)\Alin^\beta v,
\end{equation}
so that
\begin{equation}
\frac{d\|u-v\|^2}{dt} = 2 \langle A(u_1)(u-v), u-v \rangle + 2 (v_1 - u_1) \langle \Alin^\beta v, u-v \rangle.
\end{equation}
Then, in view of Proposition~\ref{prop:A-dissipative}, and since $u_1(t) \le Q_0$ for all $t\ge 0$, it holds $\langle A(u_1)(u-v), u-v \rangle \le \lambda_{Q_0} \|u-v\|^2$. Moreover, using a Cauchy-Schwarz inequality, the equalities $Q(u(t)) = Q(v(t)) = Q_0$ and Lemma~\ref{lemm:estimates-A-AA} below, 
\begin{equation}
(v_1 - u_1) \langle \Alin^\beta v, u-v \rangle \le |u_1-v_1| R(Q_0) \|u-v\| \le R(Q_0) \|u-v\|^2.
\end{equation}
Therefore,
\begin{equation}
\label{eq:estimate-d-u-v-2}
\frac{d\|u-v\|^2}{dt} \le (\lambda_{Q_0} + R(Q_0)) \|u-v\|^2.
\end{equation}
Since $u(0) = v(0)$, we conclude that $u(t) - v(t) = 0$ for all $t\ge 0$ by a Gronwall inequality.
\end{proof}

\section[Proofs related to the splitting of the dynamics]{Proofs of the results of Section~\ref{sec:splitting}}
\label{sec:appendix-splitting}

This section is organized as follows. In Section~\ref{sec:appendix-sub2}, we use the fact that the linear operator $A(b)$ is dissipative for every $b\ge 0$, to obtain estimates on the sub-dynamics~\eqref{eq:sub-dynamic2}. In Section~\ref{sec:appendix-splitting-uprime} we  give estimates on elements of~$\mathcal{Q}$ and prove Proposition~\ref{prop:uprime}. In Section~\ref{sec:appendix-splitting-proof} we prove the convergence of the splitting.

\subsection[Estimates on the second subdynamics]{Estimates on the subdynamics~\eqref{eq:sub-dynamic2}}
\label{sec:appendix-sub2}

In this section we consider the operator $A(b)$ when the parameter $b$ is fixed to a constant value in $\mathbb{R}_+$. Note that the linear BD equations can be rewritten as the following Cauchy problem:
\begin{equation}
\frac{du}{dt} = A(b) u, \qquad u(0) = u^0.
\label{problem:P1}
\tag{LB}
\end{equation}
Since $A(b)$ is dissipative, standard results of the theory of semigroups show that~\eqref{problem:P1} has a unique classical solution in $\mathcal{C}^0(\mathbb{R}_+,D(A(b)))\cap \mathcal{C}^1(\mathbb{R}_+,\mathcal{H})$ when $u^0 \in D(A(b))$ (see~\cite[Chapter 4, Theorem 1.3]{pazy2012semigroups}). This is summarized in the following result.
\begin{prop}
The operator $A(b)$ is the infinitesimal generator of a strongly continuous semigroup $(T_b(t))_{t\in \mathbb{R}_+}$. For all $u^0 \in D(A(b))$, the problem \eqref{problem:P1} therefore has a unique solution $u\in \mathcal{C}^0(\mathbb{R}_+,D(A(b)))\cap \mathcal{C}^1(\mathbb{R}_+,\mathcal{H})$ defined as $u(t) = T_b(t)u^0$ for all $t \ge 0$. Moreover the following a priori estimates hold true:
\begin{equation}
\forall\, t \ge 0, \qquad \|u(t)\| \le \e^{\lambda_b t} {\left\|u^0\right\|}, \qquad  {\left\| \frac{du}{dt}(t) \right\|} =  \| A(b) u(t) \| \le \e^{\lambda_b t} {\left\| A(b) u^0 \right\|} .
\end{equation}
\label{prop:LCD}
\end{prop}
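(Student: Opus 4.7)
My plan is to apply the Lumer--Phillips theorem to the shifted operator $A(b) - \lambda_b I$ in order to produce a $C_0$-semigroup of contractions, from which the desired semigroup $T_b(t) := \e^{\lambda_b t}\widetilde{T}_b(t)$ with generator $A(b)$ is recovered by rescaling. The dissipativity hypothesis is exactly Proposition~\ref{prop:A-dissipative}, density of the domain follows from $c_{00} \subset D(A(b))$, and closedness of $A(b)$ (hence of its shift) was recalled just before Lemma~\ref{lemm:delta}. What remains is the range condition: $\mathrm{Range}(\mu I - (A(b) - \lambda_b I)) = \mathcal{H}$ for some $\mu > 0$.

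To secure the range condition I would use a Galerkin truncation. Let $P_N$ denote the orthogonal projection onto $c_{00}^N$ and set $A_N(b) := P_N A(b) P_N$. Reading the induction in the proof of Proposition~\ref{prop:A-dissipative} with test vectors in $c_{00}^N$ shows that $A_N(b) - \lambda_b I$ is dissipative on $c_{00}^N$, hence $m$-dissipative in finite dimension. For fixed $f \in \mathcal{H}$ and $\mu > 0$, the equation $((\mu + \lambda_b)I - A_N(b))u^N = P_N f$ is then uniquely solvable in $c_{00}^N$, and testing against $u^N$ together with dissipativity yields the uniform bound $\|u^N\| \le \mu^{-1}\|f\|$. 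Extracting a weakly convergent subsequence and using closedness of $A(b)$ together with the componentwise convergence of $A_N u^N$ then allows one to identify the weak limit $u$ as an element of $D(A(b))$ solving $((\mu + \lambda_b)I - A(b))u = f$.

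Once Lumer--Phillips applies, the remaining claims follow from standard semigroup theory (see~\cite[Chapter~4, Theorem~1.3]{pazy2012semigroups}): for $u^0 \in D(A(b))$, $u(t) = T_b(t)u^0$ is the unique classical solution in $\mathcal{C}^0(\mathbb{R}_+, D(A(b))) \cap \mathcal{C}^1(\mathbb{R}_+, \mathcal{H})$; the first a priori bound is immediate from $\|T_b(t)\| \le \e^{\lambda_b t}$; the second follows from the commutation $A(b) T_b(t) u^0 = T_b(t) A(b) u^0$ valid on $D(A(b))$, giving $\|du/dt\| \le \e^{\lambda_b t}\|A(b) u^0\|$. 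The main obstacle I foresee lies in the passage to the limit in the range argument: the first row of $A(b)$ is not banded and its entries $\alpha_n - \beta_n b$ grow linearly under Assumption~\ref{hypo:alpha-beta-discrete}, so the weak $\ell^2$-bound on $u^N$ does not immediately guarantee convergence of the infinite sum $\sum_{n \ge 3}(\alpha_n - \beta_n b) u^N_n$. Either one derives a stronger a priori bound, or one bypasses the difficulty altogether by solving the tridiagonal resolvent system directly, observing that the sequence $\delta_n^\lambda$ of~\eqref{eq:delta-lambda} is precisely the continued-fraction denominator arising in such a construction.
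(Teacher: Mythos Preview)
Your approach is the same as the paper's: invoke the dissipativity of $A(b)-\lambda_b I$ (Proposition~\ref{prop:A-dissipative}), the density of $c_{00}\subset D(A(b))$, and the closedness of $A(b)$, then appeal to standard semigroup theory \cite[Chapter~4, Theorem~1.3]{pazy2012semigroups}; the a~priori bounds and the commutation argument you give for $\|du/dt\|$ are exactly the intended ones. The paper does not spell out a proof beyond this sentence and in particular does not verify the range condition in the text, deferring such details to~\cite{Terrier}.

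Your additional care on the range condition is therefore well placed rather than superfluous. The obstacle you flag is genuine: the first row of $A(b)$ couples $u_1$ to all $u_n$ with coefficients $\alpha_n-\beta_n b$ that may grow linearly under Assumption~\ref{hypo:alpha-beta-discrete}, so a bare weak-$\ell^2$ limit of the Galerkin solutions $u^N$ does not immediately control $(A(b)u)_1$. Your two proposed remedies are both reasonable. For the Galerkin route, note that from the resolvent equation one also has $\|A_N(b)u^N\|\le (1+(\mu+\lambda_b)/\mu)\|f\|$ uniformly in $N$; combining the uniform $\ell^2$-bound on $A_N(b)u^N$ with the tridiagonal structure of rows $n\ge 2$ yields a weighted bound of the form $\sum_{n\ge 2}(\alpha_n+\beta_n b)^2 (u^N_n)^2 \le C$, which is enough to pass to the limit in the first-row sum by dominated convergence. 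The continued-fraction route you mention is also viable and indeed connects naturally to the sequence $\delta_n^\lambda$ of~\eqref{eq:delta-lambda}, though one must still treat the first row separately since the system is only tridiagonal from the second row onward.
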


We next give \textit{a priori} estimates on the sub-dynamics~\eqref{eq:sub-dynamic2} which are useful for the proof of Proposition~\ref{prop:splitting}. Introducing the projection $\Pi$ such that $\Pi u = (0,u_2,u_3,\cdots)$ for all $u = (u_i)_{i\ge 1} \in \mathcal{H}$, we can define the operator $A^\Pi(u_1)$ as $A^\Pi(u_1) = \Pi A(u_1)$. The sub-dynamics~\eqref{eq:sub-dynamic2} can then be written compactly as the following linear evolution problem:
\begin{equation}
\label{problem:sub-linear}
\frac{du}{dt} = A^\Pi{\left(u_1^0\right)}u, \qquad u(0) = u^0.
\end{equation}
The following results are direct consequences of Proposition~\ref{prop:LCD} and~\cite[Chapter~4, Corollary~2.5]{pazy2012semigroups}.
\begin{prop}
\label{prop:splitting-semigroup}
Fix $u_1 \ge 0$ and suppose that Assumption~\ref{hypo:alpha-beta-discrete} holds. Then, the operator $A^\Pi(u_1)$ is the infinitesimal generator of a strongly continuous semigroup $(T_{u_1}(t))_{t\in \mathbb{R}_+}$. The problem~\eqref{eq:sub-dynamic2} therefore has a unique solution $u \in \mathcal{C}^0(\mathbb{R}_+,D(A(u_1)))\cap\mathcal{C}^1(\mathbb{R}_+,\mathcal{H})$ for all $u^0 \in D(A(u_1))$, and $u(t) = T_{u_1}(t)u^0$ for all $t \ge 0$. Moreover, there exists $\lambda_{u_1} \ge 0$ such that the following a priori estimates hold true:
\begin{equation}
\forall\, t \ge 0, \qquad {\left\|u(t)\right\|} \le \e^{\lambda_{u_1} t} {\left\|u^0\right\|} \qquad \text{and} \qquad {\left\| \frac{du}{dt}(t) \right\|} =  {\left\| A^\Pi(u_1) u(t) \right\|} \le \e^{\lambda_{u_1} t} {\left\| A^\Pi(u_1) u^0 \right\|} .
\end{equation}
Finally, fix $T > 0$ and consider $f \in \mathcal{C}^1([0,T],\mathcal{H})$. Then, the problem 
\begin{equation}
\frac{du}{dt} = A^\Pi(u_1) u + f, \qquad u(0) = u^0,
\label{problem:P1bis}
\end{equation}
has a unique classical solution $u \in \mathcal{C}^0([0,T],D(A(u_1)))\cap\mathcal{C}^1([0,T],\mathcal{H})$ defined as
\begin{equation}
\forall\, t \in [0,T], \qquad u(t) = T_{u_1}(t)u^0 + \int_0^t T_{u_1}(t-s)f(s)\,ds.
\end{equation}
\end{prop}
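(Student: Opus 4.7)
The goal is to realise $A^\Pi(u_1)$ as the infinitesimal generator of a $C_0$-semigroup on $\mathcal{H}$, since once this is known the a priori bounds and the inhomogeneous result are immediate from standard semigroup theory, the latter being precisely~\cite[Chapter~4, Corollary~2.5]{pazy2012semigroups}. I propose to follow the Lumer--Phillips route: first, show (i) dissipativity of $A^\Pi(u_1) - \lambda_{u_1} I$ for a suitable $\lambda_{u_1} \ge 0$, and (ii) density of the range of $\lambda I - A^\Pi(u_1)$ for some $\lambda > \lambda_{u_1}$; then assemble the remaining conclusions.

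\emph{Dissipativity.} Because $A^\Pi(b) = \Pi A(b)$ is $A(b)$ with the first coordinate of its image zeroed out, the natural way forward is to adapt the induction proof of Proposition~\ref{prop:A-dissipative}: for $u \in c_{00}^n$,
\[
\langle A^\Pi(b)\, u,\, u\rangle \;=\; \sum_{k \ge 2} \bigl(A(b) u\bigr)_k\, u_k
\]
is precisely the quantity handled in~\eqref{AA*}, minus the $k=1$ contribution. The induction therefore goes through essentially verbatim, with cross terms controlled by the same Young-type bound $2|u_m u_{m+1}| \le u_m^2 + u_{m+1}^2$ (or by the more refined quadratic optimisation used for $\mathcal{R}(u_n)$ in Proposition~\ref{prop:A-dissipative}), yielding a constant $\lambda_{u_1}$ depending only on $u_1$ and the bound $B$ of Assumption~\ref{hypo:alpha-beta-discrete}. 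Dissipativity on the full domain then follows by density of $c_{00}$ and closedness of $A^\Pi(u_1)$, the latter being inherited from $A(u_1)$ via the continuity of the orthogonal projection $\Pi$.

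\emph{Range condition.} Since $(A^\Pi u)_1 = 0$, the resolvent equation $(\lambda I - A^\Pi(u_1))\,u = f$ decouples into $\lambda u_1 = f_1$ and an infinite tridiagonal system for $(u_n)_{n \ge 2}$. For $f \in c_{00}$, this system can be inverted by truncating at size $N$ and passing to the limit; the uniform control required to extract an $\ell^2$-limit is exactly the dissipativity estimate of the previous step applied to each truncation, in direct parallel to the existence strategy used for~\eqref{problem:P1} underlying Proposition~\ref{prop:LCD} (as detailed in~\cite{Terrier}).

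\emph{Conclusion and main obstacle.} Lumer--Phillips then produces a $C_0$-semigroup $(T_{u_1}(t))_{t\ge 0}$ with $\|T_{u_1}(t)\| \le \e^{\lambda_{u_1} t}$; the bound on $u(t)$ is then immediate, and the bound on $du/dt$ follows by observing that $t \mapsto A^\Pi(u_1) u(t)$ itself solves~\eqref{problem:sub-linear} with initial condition $A^\Pi(u_1) u^0 \in \mathcal{H}$, so that the same semigroup estimate applies. Finally, for $f \in \mathcal{C}^1([0,T],\mathcal{H})$, the Duhamel formula for~\eqref{problem:P1bis} is a direct application of~\cite[Chapter~4, Corollary~2.5]{pazy2012semigroups}. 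The delicate point in the whole argument is the range condition: although the tridiagonal inversion is classical for each finite truncation, the unboundedness of the coefficients $\alpha_n,\beta_n$ makes it nontrivial to ensure that the limit stays in $\mathcal{H}$, and this is precisely where one must lean on the uniform dissipativity bound of the previous step.
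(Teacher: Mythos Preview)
Your proposal is correct and matches the paper's approach: the paper simply declares the result a ``direct consequence of Proposition~\ref{prop:LCD} and~\cite[Chapter~4, Corollary~2.5]{pazy2012semigroups}'', leaving implicit exactly the adaptation of the dissipativity/Lumer--Phillips argument from $A(b)$ to $A^\Pi(b)$ that you spell out, together with the standard Duhamel formula for the inhomogeneous problem. If anything, the induction is cleaner for $A^\Pi$ than for $A$, since zeroing out the first row removes the nonlocal cross terms $\langle A(b)e_{n+1},e_1\rangle u_1 u_{n+1}$, so your claim that the argument ``goes through essentially verbatim'' is well founded.
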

Finally, in order to prove the convergence of the splitting, we need estimates in $\mathcal{Q}$ of the solutions of~\eqref{eq:sub-dynamic2}.
\begin{lemm}
\label{lemm:splitting-remains-Q}
Fix an initial condition $u^0 \in \mathcal{Q}$ and $b \ge 0$. Suppose that Assumptions~\ref{hypo:alpha-beta-discrete} and~\ref{hypo:alpha-beta-gamma} hold true. Then, the unique classical solution of the second sub-dynamics $u: t\mapsto \chi_t^b{\left(u^0\right)} \in \mathcal{C}^0([0,T],D(A(b)))\cap\mathcal{C}^1([0,T],\mathcal{H})$ remains in $\mathcal{Q}$. Moreover,
\begin{equation}
Q(u(t)) \le Q(u(0))\exp(2b K t).
\end{equation}
\end{lemm}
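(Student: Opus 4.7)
The plan splits into non-negativity of $u(t)$ and a quantitative bound on $Q(u(t))$. For non-negativity, the operator $A^\Pi(b)$ is Metzler: its off-diagonal entries $\beta_{n-1}b$ and $\alpha_{n+1}$ are non-negative, while its diagonal entries $-(\beta_n b + \alpha_n)$ are non-positive. In finite dimension this ensures that $\exp(tA)$ preserves the cone of non-negative sequences. I realise $T_b(t)u^0$ as the strong limit, as $N\to\infty$, of the semigroups associated with the Galerkin truncations $A^{\Pi,N} := P_N A^\Pi(b) P_N$ (with $P_N$ the orthogonal projector on the first $N$ coordinates), each of which is a finite-dimensional Metzler generator and hence positivity-preserving. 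Strong convergence of the truncated semigroups follows from the uniform dissipativity estimate of Proposition~\ref{prop:A-dissipative} via the standard Trotter--Kato route.

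For the quantitative bound I start from the discrete-flux form $du_n/dt = J_{n-1} - J_n$ for $n\ge 2$, with $J_n := \beta_n b u_n - \alpha_{n+1} u_{n+1}$ and $du_1/dt = 0$. An Abel summation on $\{2,\dots,N\}$ yields, after expanding the fluxes and discarding the non-positive $\alpha$-contributions by non-negativity of $u(t)$,
\begin{equation*}
\frac{dQ_N}{dt} \le 2\beta_1 b u_1(t) + b\sum_{m=2}^{N-1}\beta_m u_m(t) + N\alpha_{N+1}u_{N+1}(t), \qquad Q_N(t) := \sum_{n=1}^N n\, u_n(t).
\end{equation*}
Assumption~\ref{hypo:alpha-beta-gamma} together with $\gamma\le 1$ gives $\beta_n \le Kn^\gamma \le Kn$, so the first two terms combine into at most $2bK\, Q_N(t)$ (using also $u_1 \le Q_N$). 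Were the positive boundary flux $N\alpha_{N+1}u_{N+1}(t)$ absent, Gronwall would immediately deliver $Q(u(t)) \le Q(u^0)\,\e^{2bKt}$.

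The main obstacle is that $N\alpha_{N+1}u_{N+1}(t)$ cannot be bounded in terms of $Q_N$ alone and need not vanish as $N\to\infty$. I bypass this by approximating the full dynamics by the Galerkin-truncated one $v^{(N)}$ on $\mathbb{R}^N$, that is, the flow of $A^{\Pi,N}$ starting from $w^{0,N} := P_N u^0$: for this finite-dimensional Metzler evolution the boundary contribution is \emph{absent} since one formally has $v^{(N)}_{N+1} \equiv 0$, so the analogous Abel identity reads $dQ(v^{(N)})/dt \le 2bK\,Q(v^{(N)}) - Nb\beta_N v^{(N)}_N$, whose last term is non-positive and can be discarded; Gronwall then yields $Q(v^{(N)}(t)) \le Q(w^{0,N})\,\e^{2bKt} \le Q(u^0)\,\e^{2bKt}$. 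Combined with the non-negativity step, Trotter--Kato ensures $v^{(N)}(t)\to u(t)$ in $\mathcal{H}$ and hence componentwise, and Fatou's lemma applied to $\sum_n n\, u_n(t) \le \liminf_N Q(v^{(N)}(t))$ delivers $Q(u(t)) \le Q(u^0)\,\e^{2bKt}$, which in particular shows $u(t) \in \mathcal{Q}$.
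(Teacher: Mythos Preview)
Your argument is correct, but it takes a noticeably different route from the paper's. For non-negativity, the paper simply observes that~\eqref{eq:sub-dynamic2} is the Kolmogorov forward equation of a Markov jump process and cites~\cite{terrier2017coupling}, whereas you establish positivity from scratch via the Metzler structure of the Galerkin truncations $P_N A^\Pi(b)P_N$ together with Trotter--Kato convergence. For the moment bound, the paper differentiates the full infinite sum $Q(u(t)) = b + \sum_{n\ge 2} n u_n(t)$ directly (appealing to regularity from Theorem~\ref{thm:long-time} to justify term-by-term differentiation), obtains
\[
\frac{d}{dt}Q(u(t)) = 2\beta_1 b^2 + b\sum_{n\ge 2}\beta_n u_n - \sum_{n\ge 2}\alpha_n u_n - \alpha_2 u_2 \le 2bK\,Q(u(t)),
\]
and applies Gronwall in one line. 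You instead work with the truncated dynamics $v^{(N)}$, where the problematic boundary flux $N\alpha_{N+1}u_{N+1}$ is genuinely absent, close by Gronwall there, and pass to the limit via Fatou.

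Your approach buys self-containment: it avoids invoking the probabilistic interpretation for positivity and sidesteps the question of whether $\sum_{n\ge 2} n\,du_n/dt$ converges and equals $\frac{d}{dt}\sum_{n\ge 2} n u_n$ for the sub-dynamics (a point the paper handles by citation rather than computation). The paper's approach is considerably shorter and more direct, at the cost of leaning on the Markov-process structure and on regularity results stated for the full BD dynamics. Both reach the same constant $2bK$ in the exponent via the same elementary estimate $\beta_n \le Kn^\gamma \le Kn$.
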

\begin{proof}
We first note that all components of $u$ remain non-negative since the dynamics $t\mapsto \chi_t^{b}(u)$ is in fact the Kolmogorov forward equation of a Markov jump process~\cite{terrier2017coupling}. Then, using the regularity results from Theorem~\ref{thm:long-time}, we see that $Q(u(t)) = b+ \sum_{n\ge 2} n u_n(t)$ is well defined, continuously differentiable, and
\begin{equation}
\frac{d}{dt}\big[ Q(u(t)) \big] = 2\beta_1 b^2 + \sum_{n\ge 2} b \beta_n u_n(t) - \sum_{n\ge 2}\alpha_n u_n(t) - \alpha_2 u_2(t) 
\le 2 b K {\left(b + \sum_{n \ge 2} n^\gamma u_n(t)\right)} \le 2bK Q(t).
\end{equation}
The claimed estimate then follows from a Gronwall inequality.
\end{proof}

\subsection{Some estimates on elements of $\mathcal{Q}$}
\label{sec:appendix-splitting-uprime}

We state in this section estimates for elements of the set $\mathcal{Q}$ introduced in~\eqref{eq:def-Q}.
\begin{lemm}
\label{lemm:estimates-A-AA}
Fix $Q_* \in \mathbb{R}_+$ and suppose that Assumptions~\ref{hypo:alpha-beta-discrete} and~\ref{hypo:alpha-beta-gamma} hold true. Then, there exists $R(Q_*) \in \mathbb{R}_+$ such that, for any $w\in \mathcal{Q}$ with $Q(w) \le Q_*$,
\begin{equation}
{\left\|\Alin^\alpha w \right\|}, {\left\|\Alin^\beta w \right\|}, {\left\|{\left(\Alin^\alpha\right)}^2 w \right\|}, {\left\|\Alin^\alpha \Alin^\beta w \right\|}, {\left\|\Alin^\beta \Alin^\alpha w \right\|}, {\left\|{\left(\Alin^\beta\right)}^2 w \right\|} \le R(Q_*).
\end{equation}
\end{lemm}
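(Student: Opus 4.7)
Proof plan: The plan is to exploit the fact that each coefficient $\alpha_n$ and $\beta_n$ grows at most like $K n^\gamma$ with $\gamma \le 1/2$, so that a product of two such coefficients is bounded by $K^2 n^{2\gamma} \le K^2 n$. This linear-in-$n$ bound matches exactly the first-moment control provided by membership in $\mathcal{Q}$ with $Q(w) \le Q_*$, which is the crucial mechanism making the six estimates simultaneously possible.

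First I would compute the coordinates of each of the six operators explicitly from~\eqref{eq:operator-A}. For instance, $(\Alin^\alpha w)_k = \alpha_{k+1} w_{k+1} - \alpha_k w_k$ for $k \ge 2$ while $(\Alin^\alpha w)_1 = \sum_{n\ge 2} \alpha_n w_n + \alpha_2 w_2$, and analogously for $\Alin^\beta$. For each of the four compositions, substitution produces, on the bulk $k \ge 2$, a linear combination of at most three components $w_{k-1}, w_k, w_{k+1}, w_{k+2}$ (after shifts by at most one), each with a coefficient that is a product of two factors $\alpha_j$ or $\beta_j$ with $j$ within bounded distance of $k$; on the first coordinate, each composition produces a convergent series of the form $\sum_n \alpha_n w_n$, $\sum_n \beta_n w_n$ or $\sum_n \alpha_n \beta_n w_n$ (possibly with boundary corrections from indices $n=2,3$).

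Second, I would establish the key moment inequality. Since $w \in \mathcal{Q}$ with $Q(w) \le Q_*$ gives $w_n \ge 0$ and the termwise bound $n w_n \le Q_*$, I obtain
\begin{equation}
\sum_{n \ge 1} n^{4\gamma} w_n^2 \le \sum_{n \ge 1} n^2 w_n^2 = \sum_{n \ge 1} (n w_n)(n w_n) \le Q_* \sum_{n \ge 1} n w_n \le Q_*^2,
\end{equation}
where $4\gamma \le 2$ is used. Similarly $\sum_n n^\gamma w_n \le \sum_n n w_n \le Q_*$. Both estimates transfer to index-shifted sums $\sum_k k^{4\gamma} w_{k+j}^2$ for $|j|\le 2$ by reindexing.

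Third, I would plug the formulas into the squared $\ell^2$-norm. The tail $k \ge 2$ contributes a sum bounded by $C K^4 \sum_k k^{4\gamma} (w_k^2 + w_{k+1}^2 + w_{k+2}^2) \le C K^4 Q_*^2$ by the moment inequality. For the first coordinate, the controls $\alpha_n \le K n^\gamma \le K n$ and $\alpha_n \beta_n \le K^2 n^{2\gamma} \le K^2 n$ combined with the first-moment constraint give bounds of order $K Q_*$ or $K^2 Q_*$, and squaring yields the desired $O(Q_*^2)$ contribution. Taking $R(Q_*)$ to be the resulting constant concludes the proof for all six operators.

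The main obstacle is nothing deeper than careful bookkeeping across the six cases; the only genuine analytic input is the moment interpolation in the displayed inequality, which uses both the nonnegativity of the components of $w$ and the fact that $\gamma \le 1/2$. Without the assumption $\gamma \le 1/2$ one would need control of higher moments of $w$ than the first, so the estimate would not survive for merely $w \in \mathcal{Q}$.
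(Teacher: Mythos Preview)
Your proposal is correct and follows essentially the same approach as the paper's proof: explicit computation of the coordinates of each operator, a crude triangle-inequality bound on the bulk $k\ge 2$ by sums of the form $\sum_n \alpha_n^4 w_n^2$ (or mixed products), and the same moment interpolation $\sum_n n^{4\gamma} w_n^2 \le \sum_n (nw_n)(nw_n)\le Q_*\sum_n nw_n \le Q_*^2$ using $4\gamma\le 2$ and $nw_n\le Q_*$. The paper treats the first coordinate and the first-order operators with the slightly weaker linear bound $\alpha_n\le Kn$ coming from Assumption~\ref{hypo:alpha-beta-discrete}, but this is a cosmetic difference; your observation that $\gamma\le 1/2$ is the genuinely needed analytic input matches the paper exactly.
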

\begin{proof}
Fix $w \in \mathcal{Q}$ such that $Q(w) \le Q_*$. Note first that $0\le w_n \le Q_*/n$. Then, using Assumption~\ref{hypo:alpha-beta-discrete} to bound $\alpha_n$ as $0\le \alpha_n \le Kn$ for $n\ge 1$,
\begin{equation}
{\left({A}_\mathrm{L}^\alpha w\right)}_1^2 = {\left(\sum_{n\ge 2} \alpha_n w_n + \alpha_2 w_2\right)}^2 \le {\left(2K\sum_{n\ge 2} n w_n\right)}^2 \le (2K Q_*)^2.
\end{equation}
Moreover, 
\begin{equation}
\sum_{n\ge 2} {\left({A}_\mathrm{L}^\alpha w\right)}_n^2 = \sum_{n\ge 2} {\left(\alpha_{n+1} w_{n+1} - \alpha_n w_n \right)}^2 \le 4 \sum_{n\ge 2} {\left(\alpha_{n} w_{n} \right)}^2 \le 4 \sum_{n\ge 2} \alpha_n^2 \frac{Q_*}{n^2} (n w_n) \le 4 K^2 Q_* \sum_{n\ge 2} n w_n.
\end{equation}
Therefore,
\begin{equation}
\sum_{n\ge 2} {\left({A}_\mathrm{L}^\alpha w\right)}_n^2 \le 4K^2 Q_* \sum_{n\ge 2} n w_n \le 4 K^2 Q_*^2,
\end{equation}
which gives us ${\left\|{A}_\mathrm{L}^\alpha w\right\|} \le 2\sqrt{2}KQ_*$. A similar reasoning can be used to bound $ {\left\|{A}_\mathrm{L}^\beta w \right\|}$. Let us next consider ${\left\|{\left({A}_\mathrm{L}^\alpha\right)}^2 w \right\|}$. Since $\alpha$ is non-decreasing, it holds
\begin{equation}
{\left|{\left({A}_\mathrm{L}^\alpha {A}_\mathrm{L}^\alpha w\right)}_1 \right|} = {\left| \sum_{n\ge 2} \alpha_n {\left( \Alin^\alpha w \right)}_n + \alpha_2 {\left( \Alin^\alpha w\right)}_2\right|} \le 2 \sum_{n\ge 2} \alpha_n{\left|\alpha_{n+1} w_{n+1} - \alpha_n w_n \right|} \le 4 \sum_{n\ge 2} \alpha_n^2 w_n.
\end{equation}
In view of Assumption~\ref{hypo:alpha-beta-gamma}, it holds, with $0\le \gamma \le 1/2$,
\begin{equation}
\sum_{n\ge 2} \alpha_n^2 w_n \le K^2 \sum_{n\ge 2} n^{2\gamma-1} n w_n \le K^2 \sum_{n\ge 2} n w_n \le  K^2 Q_*.
\end{equation}
Moreover, for $n\ge 2$,
\begin{equation}
\begin{aligned}
{\left| {\left({A}_\mathrm{L}^\alpha {A}_\mathrm{L}^\alpha w\right)}_n \right|}  &= {\left| \alpha_{n+1} {\left(\Alin^\alpha w\right)}_{n+1} - \alpha_{n} {\left(\Alin^\alpha w\right)}_{n} \right|} = {\left| \alpha_{n+2} \alpha_{n+1} w_{n+2} - {\left(\alpha_{n+1}^2 + \alpha_{n+1}\alpha_n\right)} w_{n+1} + \alpha_n^2 w_n \right|} \\
& \le \alpha_{n+2}^2 w_{n+2} + 2\alpha_{n+1}^2 w_{n+1} + \alpha_n^2 w_n.
\end{aligned}
\end{equation}
Therefore, since $0\le \alpha_n^4/ n^2 \le K^4$ and $nw_n \le Q_*$, it holds
\begin{equation}
\sum_{n\ge 2} {\left({A}_\mathrm{L}^\alpha {A}_\mathrm{L}^\alpha w\right)}_n^2 \le 16 \sum_{n\ge 2} \alpha_n^4 w_n^2 \le 16 \sum_{n\ge 2} \frac{\alpha_n^4}{n^2} (n w_n)^2 \le 16K^4 Q_* \sum_{n\ge 2} n w_n \le 16 K^4 Q_*^2.
\end{equation}
In conclusion, ${\left\|{A}_\mathrm{L}^\alpha {A}_\mathrm{L}^\alpha w \right\|} \le 4\sqrt{2}K^2Q_* $. Similar computations can be performed for ${A}_\mathrm{L}^\alpha {A}_\mathrm{L}^\beta w$, ${A}_\mathrm{L}^\beta {A}_\mathrm{L}^\alpha w$ and ${A}_\mathrm{L}^\beta {A}_\mathrm{L}^\beta w$, which leads to the claimed estimates.
\end{proof}

The above estimates allow us in particular to prove Proposition~\ref{prop:uprime}.

\begin{proof}[Proof of Proposition~\ref{prop:uprime}]
Fix a time $T > 0$, a constant $Q_* \in \mathbb{R}_+$ and a non-negative initial condition $u^0 \in \mathcal{Q}$. Suppose that the total quantity of matter of the initial condition satisfies $Q{\left(u^0\right)} \le Q_*$ and denote by $|\cdot|_{\mathcal{C}^0}$ the uniform norm for functions in $\mathcal{C}^0([0,T],\mathbb{R})$, namely $|f|_{\mathcal{C}^0} = \sup_{0 \le t \le T} {\left|f(t) \right|}$. Recall that the total quantity of matter is conserved, so that $Q(u(t)) \le Q_*$. Since $u$ stays non-negative, it holds $0\le  u_n(t)  \le Q_*/n$ for all $t\ge 0$ and $n\ge 1$. In particular, $|u_1|_{\mathcal{C}^0} \le Q_*$. Therefore, for all $0\le t\le T$,
\begin{equation}
{\left\|\frac{du}{dt}(t)\right\|} = {\left\|\Alin^\alpha u(t) + u_1(t) \Alin^\beta u(t)\right\|} \le {\left\|\Alin^\alpha u(t)\right\|} + Q_* {\left\|\Alin^\beta u(t)\right\|} \le (1+Q_*)R(Q_*),
\end{equation}
which concludes the proof of the bound for $du/dt$ in view of Lemma~\ref{lemm:estimates-A-AA}. In particular,
\begin{equation}
\label{eq:estimate-du1}
{\left|\frac{du_1}{dt}\right|}_{\mathcal{C}^0} \le \sup_{0\le t\le T}{\left\|\frac{du}{dt}(t)\right\|} \le (1+Q_*)R(Q_*).
\end{equation}
Then, for all $0\le t\le T$,
\begin{equation}
\begin{aligned}
{\left\|\frac{d^2u}{dt^2}(t)\right\|} &\le {\left\|\Alin^\alpha {\left( \Alin^\alpha + u_1(t)\Alin^\beta\right)} u(t)\right\|} + {\left| u_1 \right|}_{\mathcal{C}^0} {\left\|\Alin^\beta{\left(\Alin^\alpha + u_1(t)\Alin^\beta\right)} u(t)\right\|} + {\left|\frac{du_1}{dt} \right|}_{\mathcal{C}^0} {\left\|\Alin^\beta u(t)\right\|} \\
&\le {\left\|{\left(\Alin^\alpha\right)}^2 u(t)\right\|} + {\left|u_1\right|}_{\mathcal{C}^0} {\left( {\left\|\Alin^\alpha \Alin^\beta u(t)\right\|} +   {\left\|\Alin^\beta \Alin^\alpha u(t)\right\|} \right)} + {\left|u_1\right|}^2_{\mathcal{C}^0} {\left\|{\left(\Alin^\beta\right)}^2 u(t)\right\|} + {\left|\frac{du_1}{dt} \right|}_{\mathcal{C}^0} {\left\|\Alin^\beta u(t)\right\|},
\end{aligned}
\end{equation}
from which we obtain the estimate for $d^2u/dt^2$ in view of Lemma~\ref{lemm:estimates-A-AA} and~\eqref{eq:estimate-du1}.
\end{proof}

\subsection[Proof of the convergence of the splitting]{Proof of Proposition~\ref{prop:splitting}}
\label{sec:appendix-splitting-proof}

We can now write the proof of the convergence of the splitting of the dynamics. The proof can be decomposed in three steps. We first prove the consistency of the splitting for elements of $\mathcal{Q}$ which are bounded in an appropriate norm. We next prove its stability, under the same conditions on elements of $\mathcal{Q}$. We finally conclude to the convergence for arbitrary times using the fact that solutions of~\eqref{problem:P3} are uniformly bounded.

\paragraph{Step 0: Technical results on $\varphi$.} Let us recall that the flow $\varphi_t$ defined in~\eqref{eq:sub-dynamic1} acts only upon the first component of an element $v \in \mathcal{H}$. For $v\in \mathcal{Q}$, denote by
\begin{equation}
\label{eq:def-a-b-c}
a = 2\beta_1,\qquad b(v) = \sum_{n\ge2} \beta_n v_n ,\qquad c(v) = \sum_{n\ge2} \alpha_n v_n + \alpha_2 v_2,
\end{equation}
where $a > 0$ and $b(v),c(v) \ge 0$ are fixed. The dynamics on $t\mapsto \varphi^{(v_2,\cdots)}_t(v_1)$ therefore writes
\begin{equation}
\label{eq:edo-varphi}
\frac{d\varphi^{(v_2,\cdots)}_t(v_1)}{dt} = -a {\left(\varphi^{(v_2,\cdots)}_t(v_1)\right)}^2 - b(v) \varphi^{(v_2,\cdots)}_t(v_1) + c(v).
\end{equation}
In order to prove stability and consistency results on the flow $\varphi_t$, we need the following technical results (the proof of the first one is given at the end of this section).
\begin{lemm}
\label{lemm:estimate-for-varphi}
Fix a time $t\ge 0$ and $Q_* > 0$. Then, there is $\mathcal{B}(Q_*) \in \mathbb{R}_+$ such that, for any $v\in \mathcal{Q}$ with $Q(v) \le Q_*$ and $(v_2,v_3\cdots) \neq (0,0,\cdots)$, it holds~$\varphi^{(v_2,\cdots)}_t(v_1) \ge 0$ for all $t\ge 0$, and
\begin{equation}
\label{eq:splitting-varphi-bound-t}
{\left|\varphi^{(v_2,\cdots)}_t(v_1)\right|},  {\left|\frac{d \varphi^{(v_2,\cdots)}_t(v_1)}{dt}\right|},  {\left|\frac{d^2\varphi^{(v_2,\cdots)}_t(v_1)}{dt^2}\right|}  \le \mathcal{B}(Q_*).
\end{equation}
\end{lemm}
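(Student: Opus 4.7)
The plan is to exploit the Riccati structure of the scalar ODE~\eqref{eq:edo-varphi}, namely
\begin{equation}
\frac{d\varphi}{dt} = -a\varphi^2 - b(v)\,\varphi + c(v), \qquad \varphi(0) = v_1,
\end{equation}
with $a = 2\beta_1 \ge 0$ and $b(v), c(v) \ge 0$ fixed (since the components $v_2, v_3, \ldots$ are frozen under the flow $\varphi_t^{(v_2,\dots)}$). The proof proceeds in four short steps: obtain a priori bounds on the coefficients $b(v), c(v)$ and on $v_1$ in terms of $Q_*$, then establish positivity, a uniform bound on $\varphi$, and finally bounds on the first two time derivatives by algebraic manipulation of the ODE.

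\medskip
\noindent\textbf{Step 1: a priori control of the coefficients.} Using Assumption~\ref{hypo:alpha-beta-gamma} and $\gamma \le 1/2 \le 1$, one has $\beta_n, \alpha_n \le K n$, hence
\begin{equation}
0 \le b(v) \le K\sum_{n\ge 2} n v_n \le K Q_*, \qquad 0 \le c(v) \le 2K \sum_{n\ge 2} n v_n \le 2K Q_*,
\end{equation}
together with $0 \le v_1 \le Q(v) \le Q_*$. From here on every bound depends on $Q_*$ (and the problem constants $K, \beta_1$) only.

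\medskip
\noindent\textbf{Step 2: non-negativity.} Since $v_1 \ge 0$ and, at $\varphi = 0$, the right-hand side equals $c(v) \ge 0$, a standard barrier / comparison argument (or Gronwall applied to $\max(-\varphi,0)$) shows $\varphi^{(v_2,\dots)}_t(v_1) \ge 0$ for all $t \ge 0$ for which the solution exists.

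\medskip
\noindent\textbf{Step 3: uniform bound on $\varphi$.} This is the main point, and in particular rules out finite-time blow-up. From non-negativity,
\begin{equation}
\frac{d\varphi}{dt} \le c(v) - a\varphi^2 \le 2K Q_* - 2\beta_1 \varphi^2.
\end{equation}
Setting $\varphi_* = \sqrt{c(v)/a} \le \sqrt{KQ_*/\beta_1}$ (assuming $\beta_1 > 0$, which holds in our setting and in Example~\ref{expl:alpha-beta-discret}), one sees that whenever $\varphi(t) > \varphi_*$ one has $d\varphi/dt < 0$, so the set $\{\varphi \le \max(v_1, \varphi_*)\}$ is forward-invariant. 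Hence
\begin{equation}
\forall\, t \ge 0, \qquad 0 \le \varphi^{(v_2,\dots)}_t(v_1) \le \max\!\left(Q_*, \sqrt{KQ_*/\beta_1}\right) =: B_0(Q_*).
\end{equation}
(The non-degeneracy hypothesis $(v_2, v_3, \dots) \neq 0$ is what guarantees we are in the genuinely non-trivial regime; even in the degenerate case the bound reduces to the trivial one $\varphi \le v_1 \le Q_*$.)

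\medskip
\noindent\textbf{Step 4: bounds on the derivatives.} Plugging the bound $\varphi \le B_0(Q_*)$ back into the ODE gives
\begin{equation}
\left| \frac{d\varphi}{dt} \right| \le a B_0(Q_*)^2 + b(v) B_0(Q_*) + c(v) =: B_1(Q_*).
\end{equation}
Differentiating~\eqref{eq:edo-varphi} in $t$ (legitimate since the right-hand side is smooth in $\varphi$ and $b(v), c(v)$ are constant in time) yields
\begin{equation}
\frac{d^2\varphi}{dt^2} = -(2a\varphi + b(v)) \frac{d\varphi}{dt},
\end{equation}
so $|d^2\varphi/dt^2| \le (2a B_0(Q_*) + b(v)) B_1(Q_*) =: B_2(Q_*)$. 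Taking $\mathcal{B}(Q_*) = \max(B_0, B_1, B_2)(Q_*)$ concludes the proof.

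\medskip
The only subtle point — and the expected main obstacle — is Step~3: one must avoid the naive estimate $d\varphi/dt \le c(v)$, which only yields linear-in-$t$ growth, and instead use the dissipative quadratic term $-a\varphi^2$ to obtain a time-uniform bound. All other steps are essentially algebraic consequences of this bound and of the $Q_*$-control on $b(v), c(v), v_1$.
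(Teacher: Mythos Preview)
Your proof is correct and reaches the same conclusions as the paper, but by a genuinely different route. The paper's proof writes down the explicit closed-form solution of the Riccati equation~\eqref{eq:edo-varphi} in terms of the roots $r_\pm(v) = \bigl(-b(v) \pm \sqrt{b(v)^2 + 4ac(v)}\bigr)/(2a)$, reads off positivity from the sign structure of numerator and denominator, and bounds $\varphi$ by bounding $r_+(v)$ via Lemma~\ref{lemm:estimates-A-AA}. You instead use a qualitative invariance argument: positivity from the barrier at $\varphi=0$, and the uniform bound from forward invariance of $\{\varphi \le \max(v_1,\sqrt{c(v)/a})\}$ using the dissipative term $-a\varphi^2$. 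Steps~1 and~4 are essentially the same in both proofs (coefficient bounds from $Q_*$, then algebraic propagation to the derivatives via the ODE and its differentiated form $\varphi'' = -(2a\varphi+b)\varphi'$). Your approach is more elementary and avoids the explicit formula, which also explains why the non-degeneracy hypothesis $(v_2,v_3,\dots)\neq 0$ is less essential for you: in the paper it is needed so that $r_+>0>r_-$ strictly and the closed-form expression is non-degenerate, whereas your comparison argument goes through regardless. Both proofs tacitly use $a=2\beta_1>0$.
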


\begin{lemm}
\label{lemm:chi-t-positive}
Consider $v\in\mathcal{Q}$ and suppose that there exists $k\ge 2$ such that $v_k > 0$. Then, for all $t\ge 0$, there exists $\ell \ge 2$ such that $(\chi_t(v))_\ell > 0$.
\end{lemm}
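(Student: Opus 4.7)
The plan is to apply the Duhamel/variation-of-constants formula to the ODE satisfied by the single component $u_k(t) = (\chi_t(v))_k$, exploiting the fact that every term feeding into this equation (other than the diagonal dissipation) is non-negative. The key observation is that the sub-dynamics~\eqref{eq:sub-dynamic2} is a linear system with non-negative off-diagonal couplings, so each $u_n$ evolves as a dissipative ODE driven by non-negative sources, and hence the component that is initially positive never vanishes.

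More precisely, I would first record the fact already used in the proof of Lemma~\ref{lemm:splitting-remains-Q}: the flow $\chi_t$ preserves non-negativity, so writing $u(t)=\chi_t(v)$ we have $u_m(t)\ge 0$ for all $m\ge 1$ and $t\ge 0$ (with $u_1(t)\equiv v_1$). I would then fix $k\ge 2$ with $v_k>0$ given by hypothesis, set $\lambda_k=\beta_k v_1+\alpha_k$, and rewrite the ODE for $u_k$ from~\eqref{eq:sub-dynamic2} as
\begin{equation}
\frac{du_k}{dt}(t) = -\lambda_k\, u_k(t) + f_k(t),
\end{equation}
where for $k\ge 3$, $f_k(t)=\beta_{k-1}v_1 u_{k-1}(t)+\alpha_{k+1}u_{k+1}(t)\ge 0$, and for $k=2$, $f_2(t)=\beta_1 v_1^2+\alpha_3 u_3(t)\ge 0$. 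Integrating this scalar linear ODE by Duhamel's formula yields
\begin{equation}
u_k(t) = \mathrm{e}^{-\lambda_k t} v_k + \int_0^t \mathrm{e}^{-\lambda_k(t-s)} f_k(s)\,ds \ge \mathrm{e}^{-\lambda_k t} v_k > 0,
\end{equation}
for every $t\ge 0$. Choosing $\ell=k$ then yields the claim.

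There is no real obstacle: the whole argument rests on the positivity preservation of $\chi_t$ (a fact inherited from the Markov jump process interpretation invoked in the proof of Lemma~\ref{lemm:splitting-remains-Q}) together with the scalar Duhamel formula. The only minor point to watch is the slightly different form of the source term $f_2$ for the $n=2$ equation, due to the non-homogeneous contribution $\beta_1 v_1^2$, but this only makes the bound stronger since that term is non-negative as well.
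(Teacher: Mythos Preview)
Your argument is correct. Once non-negativity of all components under $\chi_t$ is granted (which the paper already records in the proof of Lemma~\ref{lemm:splitting-remains-Q}), the scalar Duhamel formula for the $k$-th component immediately yields $u_k(t)\ge \mathrm{e}^{-\lambda_k t}v_k>0$, so $\ell=k$ works for every $t\ge 0$.

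This is a genuinely different route from the paper's. The paper invokes the Markov jump process interpretation more heavily: beyond non-negativity, it uses that every state is accessible from state~$k$, which via standard results on Kolmogorov forward equations forces $u_n(t)>0$ for \emph{all} $n\ge 1$ and $t>0$. That gives a stronger conclusion than the lemma actually requires. Your approach is more elementary and self-contained: it needs only the non-negativity already established, plus a one-line variation-of-constants computation, and it pins down explicitly that the \emph{same} index $k$ remains strictly positive for all time. The trade-off is that the paper's argument would be needed if one later wanted strict positivity of every component, whereas yours delivers exactly what the lemma asks and no more.
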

The proof of this result is based on the observation that the subdynamics~\eqref{eq:sub-dynamic2} is a Kolmogorov forward equation of a Markov jump process. The solution therefore remains non-negative. Moreover, since every state is accessible from the state $k$ for this Markov process, the solution of the Kolmogorov forward equation satisfies $u_n(t) > 0$ for all $t > 0$ and all $n\ge 1$ (see~\cite[Chapter 3.2]{norris1998markov}).

\paragraph{Step 1: Consistency.} We prove that, for any $Q_* > 0$, there exists a constant $L_1(Q_*) \in \mathbb{R}_+$ such that, for all $u^0 \in \mathcal{Q}$ with $Q{\left(u^0\right)} \le Q_*$, it holds
\begin{equation}
\label{eq:consistency}
\forall\,0\le \Delta t\le 1,\qquad {\left\|S_{\Delta t}{\left(u^0\right)} - u(\Delta t) \right\|} \le L_1(Q_*) \Delta t^2,
\end{equation}
where $u(t)$ is the solution of~\eqref{problem:P3} at time $t$ with initial condition $u^0$. We first estimate the error between $u_1(\Delta t)$ and $\varphi_{\Delta t}(u_1^0)$, before quantifying the error between $u(\Delta t)$ and $S_{\Delta t}{\left(u^0\right)}$. In the remainder of this part, we fix $Q_* > 0$.  Moreover, recall that the flows $\varphi$ and $\chi$ preserve the non-negativity (see the proofs of Lemmas~\ref{lemm:estimate-for-varphi} and~\ref{lemm:splitting-remains-Q} respectively).\\
\\
\textbf{Step 1.1: Error estimate on $u_1(\Delta t) - \varphi_{\Delta t}{\left(u_1^0\right)}$.} We first show that there is $P_1(Q_*) \in \mathbb{R}_+$ such that, for all $u^0 \in \mathcal{Q}$ with $Q{\left(u^0\right)} \le Q_*$, it holds
\begin{equation}
\label{eq:consistence-u1}
\forall\,0\le \Delta t\le 1,\qquad {\left|u_1(\Delta t) - \varphi_{\Delta t}{\left(u_1^0\right)}\right|} \le P_1(Q_*) \Delta t^2.
\end{equation}
The dynamics on $t\mapsto \varphi_t{\left(u_1^0\right)}$ reads
\begin{equation}
\label{eq:varphi-t-u0}
\frac{d\varphi_t{\left(u_1^0\right)}}{dt} = -a{\left(\varphi_t{\left(u_1^0\right)}\right)}^2 - b(u(0)) \varphi_t{\left(u_1^0\right)} + c(u(0)), \qquad \varphi_0{\left(u_1^0\right)} = u_1^0,
\end{equation}
while the one on $t\mapsto u_1(t)$ reads
\begin{equation}
\frac{du_1}{dt} = -a u_1(t)^2 - b(u(t)) u_1(t) +  c(u(t)),\qquad u_1(0) = u_1^0,
\end{equation}
where $b$ and $c$ are defined in \eqref{eq:def-a-b-c}. Since $t\mapsto u_1(t)$ and $ t\mapsto \varphi_t{\left(u_1^0\right)}$ are twice continuously differentiable (see Proposition~\ref{prop:uprime} and Lemma~\ref{lemm:estimate-for-varphi}), and 
\begin{equation}
\frac{du_1}{dt}(0) = {\left.\frac{d\varphi_t{\left(u_1^0\right)}}{dt}\right|}_{t=0},
\end{equation}
it follows that
\begin{equation}
{\left|u_1(\Delta t) - \varphi_{\Delta t}{\left(u_1^0\right)}\right|} \le \frac{1}{2}\Delta t^2 {\left[ \sup_{0\le\theta\le \Delta t} {\left|\frac{d^2u_1}{dt^2}(\theta)\right|} + \sup_{0\le\theta\le \Delta t} {\left|\frac{d^2\varphi_t{\left(u_1^0\right)}}{dt^2}\right|}\right]}.   
\end{equation}
The second order derivative $d^2\varphi_t{\left(u_1^0\right)}/dt^2$ is uniformly bounded in time by $\mathcal{B}(Q_*)$ (see~\eqref{eq:splitting-varphi-bound-t}). Moreover, in view of Proposition~\ref{prop:uprime}, $d^2u_1/dt^2$ is also uniformly bounded in time, by a constant which depends on $Q_*$. This leads to~\eqref{eq:consistence-u1}.\\
\\
\textbf{Step 1.2: Error estimates on $\Pi(S_{\Delta t}(u^0) - u(\Delta t))$.} We prove that there is $P_2(Q_*) \in \mathbb{R}_+$ such that, for all $u^0 \in \mathcal{Q}$ with $Q{\left(u^0\right)} \le Q_*$, it holds
\begin{equation}
\label{eq:consistence-Pi-u}
\forall\, 0\le \Delta t\le 1,\qquad {\left\| \Pi{\left(u(\Delta t) - S_{\Delta t}{\left(u^0\right)}\right)} \right\|} \le P_2(Q_*) \Delta t^2.
\end{equation}
Let us first reinterpret $\Pi S_{\Delta t}$ as the flow of some time continuous dynamics. We rewrite to this end~\eqref{problem:sub-linear} as
\begin{equation}
\frac{d\chi_t{\left(u^0\right)}}{dt} = {\left(\Pi\Alin^\alpha + \varphi_{\Delta t}{\left(u_1^0\right)}\Pi\Alin^\beta \right)} \chi_t{\left(u^0\right)}, \qquad \chi_0{\left(u^0\right)} = {\left(\varphi_{\Delta t}{\left(u_1^0\right)}, u_2^0, \cdots\right)}.
\end{equation}
Consider $\widetilde{S}_t = \chi_t \circ \varphi_{\Delta t}$ and note that $S_{\Delta t} = \widetilde{S}_{\Delta t}$. Moreover, $w = \Pi{\left(u-\widetilde{S}_t{\left(u^0\right)}\right)}$ is solution of
\begin{equation}
\frac{dw}{dt}(t) = A^\Pi{\left(\varphi_{\Delta t}{\left(u_1^0\right)}\right)} w(t) + {\left(u_1(t)-\varphi_{\Delta t}{\left(u_1^0\right)}\right)}\Pi\Alin^\beta u(t), \qquad w(0) = 0.
\end{equation}
Using Proposition~\ref{prop:splitting-semigroup}, since $t\mapsto u_1(t) - \varphi_{\Delta t}{\left(u_1^0\right)}$ and $t\mapsto \Alin^\beta u(t)$ are continuously differentiable, we can write
\begin{equation}
w(\Delta t) = \int_0^{\Delta t} T_{\varphi_{\Delta t}{\left(u_1^0\right)}}(\Delta t-s){\left[u_1(s) - \varphi_{\Delta t}{\left(u_1^0\right)}\right]}\Pi\Alin^\beta u(s)\, ds, 
\end{equation}
so that
\begin{equation}
{\left\| w(\Delta t) \right\|} \le \Delta t \sup_{0\le s \le \Delta t} {\left\| T_{\varphi_{\Delta t}{\left(u_1^0\right)}}(s) \right\|}  \sup_{0\le s\le \Delta t} {\left\{ {\left|u_1(s) - \varphi_{\Delta t}{\left(u_1^0\right)}\right|} {\left\| \Pi \Alin^\beta u(s) \right\|} \right\}}.
\end{equation}
Since $\varphi_{t}{\left(u_1^0\right)}$ is bounded by $\mathcal{B}(Q_*)$ (see~\eqref{eq:splitting-varphi-bound-t}), in view of Proposition~\ref{prop:LCD}, it holds, for any $0\le s\le \Delta t$,
${\left\| T_{\varphi_{\Delta t}{\left(u_1^0\right)}} (s) \right\|}  \le \exp{\left(\lambda_{\mathcal{B}(Q_*)} \Delta t\right)}$.
Moreover, in view of Proposition~\ref{prop:uprime} and Lemma~\ref{lemm:estimate-for-varphi}, and since $u_1(0) = u_1^0$,
\begin{equation}
{\left|u_1(s) - \varphi_{\Delta t}{\left(u_1^0\right)}\right|} \le \Delta t {\left( {\left|\frac{du_1}{dt}\right|}_{\mathcal{C}^0([0,\Delta t])} + {\left|\frac{d\varphi_t{\left(u_1^0\right)}}{dt}\right|}_{\mathcal{C}^0([0,\Delta t])}  \right)} \le \Delta t{\left( R(Q_*) + \mathcal{B}(Q_*)\right)}.
\end{equation}
Finally, in view of Lemma~\ref{lemm:estimates-A-AA}, since $Q(u(s)) = Q{\left(u^0\right)} \le Q_*$ for any $0\le s\le \Delta t$ (see Theorem~\ref{thm:long-time}), we obtain ${\left\| \Pi \Alin^\beta u(s) \right\|} \le R(Q_*)$. Then, ${\left\| \Pi{\left(u(\Delta t) - S_{\Delta t}{\left(u^0\right)}\right)} \right\|} \le \Delta t^2 \exp{\left(\lambda_{\mathcal{B}(Q_*)} \Delta t\right)}\big[ R(Q_*) + \mathcal{B}(Q_*) \big]R(Q_*)$, which leads to~\eqref{eq:consistence-Pi-u}. \\
\\
\textbf{Step 1.3: The splitting is consistent.} Consider now $u^0 \in \mathcal{Q}$ with $Q{\left(u^0\right)} \le Q_*$. We first note that ${\left\|S_{\Delta t}{\left(u^0\right)} - u(\Delta t)\right\|}^2 = {\left\|{\left(u_1(\Delta t) - \varphi_{\Delta t}{\left(u^0\right)},0,\cdots\right)}\right\|}^2 + {\left\| \Pi{\left(u(\Delta t) - S_{\Delta t}{\left(u^0\right)}\right)} \right\|}^2$. The estimate~\eqref{eq:consistency} then follows from~\eqref{eq:consistence-u1} and~\eqref{eq:consistence-Pi-u}.

\paragraph{Step 2: Stability.} We prove that, for any $Q_* > 0$, there exists $L_2(Q_*) \in \mathbb{R}_+$, such that for all $u,v \in \mathcal{Q}$ with $Q(u), Q(v) \le Q_*$, it holds
\begin{equation}
\label{eq:stability}
\forall\, 0\le \Delta t \le 1, \qquad {\left\| S_{\Delta t}(u) - S_{\Delta t}(v) \right\|} \le \exp(L_2(Q_*)\Delta t) {\left\| u - v \right\|} + 2 L_1(Q_*)  \Delta t^2 .
\end{equation}
Fix $Q_* > 0$ and consider $u,v \in \mathcal{Q}$ with $Q(u), Q(v) \le Q_*$. Denote by $\widetilde{u},\widetilde{v}$ the solutions of~\eqref{problem:P3} with initial conditions $u, v$ respectively. Then, ${\left\| S_{\Delta t}(u) - S_{\Delta t}(v) \right\|} \le  {\left\| S_{\Delta t}(u) - \widetilde{u}(\Delta t) \right\|} +  {\left\| \widetilde{u}(\Delta t) - \widetilde{v}(\Delta t) \right\|} +  {\left\| S_{\Delta t}(u) - \widetilde{v}(\Delta t) \right\|}$, where ${\left\| S_{\Delta t}(u) - \widetilde{u}(\Delta t) \right\|} + {\left\| S_{\Delta t}(u) - \widetilde{v}(\Delta t) \right\|} \le 2 L_1(Q_*)\Delta t^2$ in view of~\eqref{eq:consistency}. Moreover, in view of~\eqref{eq:estimate-d-u-v-2}, it holds
\begin{equation}
\frac{d\|\widetilde{u}-\widetilde{v}\|^2}{dt} \le (\lambda_{Q_*} + R(Q_*)) \|\widetilde{u}-\widetilde{v}\|^2.
\end{equation}
Therefore, using a Gronwall inequality, there exists $L_2(Q_*) \in \mathbb{R}_+$ such that ${\left\| \widetilde{u}(\Delta t) - \widetilde{v}(\Delta t) \right\|} \le \exp(L_2(Q_*)\Delta t)\|u - v\|$. The estimate~\eqref{eq:stability} follows by combining the latter inequality with the consistency estimates~\eqref{eq:consistency}.

\paragraph{Step 3: Convergence.} The convergence of the splitting as $\Delta t \to 0$ classically follows from the stability and consistency estimates obtained in Steps~1 and~2. We however first need to make sure that $Q{\left(u^n\right)} \le Q_*$ in order to apply~\eqref{eq:consistency} and~\eqref{eq:stability} for a well-chosen $Q_*$. The proof proceeds by induction.

Fix a time $\tau > 0$, an initial condition $u^0 \in \mathcal{Q}$ and let $Q_0 = Q{{\left(u^0\right)}}$. Our aim is to prove that there exist $\Delta t_* > 0$ and~$L(\tau,u^0) \in \mathbb{R}_+$ such that
\begin{equation}
\label{eq:convergence}
\forall\,0 < \Delta t \le \Delta t_*,\qquad \forall\, 0\le n\le \frac{\tau}{\Delta t}, \qquad {\left\|u(n\Delta t) - u^n \right\|} \le L(\tau,u^0) \Delta t.
\end{equation}
In fact, as made precise below, the constant $L(\tau,u^0)$ depends on~$u^0$ only through $Q_0$ and $M_* = 2 \sup_{0\le t \le \tau} {\left\| u(t) \right\|}$. More precisely, consider $Q_* = 2Q_0\exp{\left(2K M_* \tau\right)}$ (with $K$ as in Assumption~\ref{hypo:alpha-beta-gamma}), as well as the constant $\mathcal{K}(\tau,Q_*) = 3L_1(Q_*)\tau \exp(L_2(Q_*)\tau)$ (where the prefactors $L_i(Q_*)$ are the ones appearing in~\eqref{eq:consistency} and~\eqref{eq:stability}), and the time step 
\begin{equation}
\label{eq:def-detlat-star}
\Delta t_* = \min{\left(1, \frac{M_*}{2\mathcal{K}(\tau,Q_*)}, \frac{\ln(2)}{2K\mathcal{B}(Q_*) \tau} \right)},
\end{equation}
where $\mathcal{B}(Q_*)$ is the constant appearing in Lemma~\ref{lemm:estimate-for-varphi}. Fix $0 < \Delta t \le \Delta t_*$. We prove by induction that, for $0 \le n \le \tau/\Delta t$, it holds
\begin{equation}
\label{eq:splitting-induction}
\forall\, 0\le k \le n, \quad Q{\left(u^k\right)} \le Q_0\exp{\left(2K(M_*+\Delta t \mathcal{B}(Q_*))k\Delta t\right)}, \quad {\left\|u^k\right\|} \le M_*,\quad {\left\|u(k\Delta t) - u^k \right\|} \le \mathcal{K}(\tau,Q_*) \Delta t.
\end{equation}
The induction basis is clear since $u^0 = u(0)$. Assume now that~\eqref{eq:splitting-induction} holds for some integer $0 \le n \le \tau/\Delta t$ such that $n+1 \le \tau/\Delta t$. First, in view of Lemma~\ref{lemm:splitting-remains-Q}, 
$Q{\left(S_{\Delta t}{\big(u^n\big)}\right)} \le Q{\left(u^n\right)}\exp{\left(2K \varphi_{\Delta t}{\left(u_1^n\right)} \Delta t\right)}$.
Moreover, in view of Lemma~\ref{lemm:estimate-for-varphi}, we also have $\varphi_{\Delta t}{\left(u^n_1\right)} \le {\left|u_1^n\right|} + \Delta t \mathcal{B}(Q_*)$ and ${\left|u_1^n \right|} \le {\left\| u^n \right\|} \le M_*$. Then, by the induction hypothesis, $Q{\left(u^{n+1}\right)} \le Q_0\exp{\left(2K(M_*+\Delta t \mathcal{B}(Q_*))(n+1)\Delta t\right)}$. Therefore, $Q{\left(u^{n+1}\right)} \le Q_0 \exp{\left(2K M_* \tau\right)}\exp(2K\mathcal{B}(Q_*)\tau\Delta t) \le Q_*$, where we used the fact that $\Delta t \le \ln(2){\left(2K\mathcal{B}(Q_*) \tau\right)}^{-1}$ so that $\exp(2K\mathcal{B}(Q_*)\tau\Delta t) \le 2$, as well as the fact that $Q_0\exp{\left(2K M_* \tau\right)} = Q_*/2$. We are then in position to prove the two other inequalities in~\eqref{eq:splitting-induction}. Note that $Q{\left(u(t)\right)} = Q_0 \le Q_*$ for all~$t\ge 0$. Therefore, in view of the estimates~\eqref{eq:consistency} and~\eqref{eq:stability}, for all~$0 < \Delta t \le \Delta t_*$ and all~$0\le n \le \tau/\Delta t$, it holds
\begin{equation}
\begin{aligned}
{\left\|u((n+1)\Delta t) - u^{n+1}\right\|} &\le {\left\|u((n+1)\Delta t) - S_{\Delta t}\big(u(n\Delta t)\big)\right\|} + {\left\|S_{\Delta t}\big(u(n\Delta t)\big) - u^{n+1}\right\|} \\
&\le 3L_1(Q_*)\Delta t^2 + \exp(L_2(Q_*)\Delta t) {\left\| u(n\Delta t) - u^{n} \right\|}. \end{aligned}
\end{equation}
Since $u(0) = u^0$, we obtain by recursion
\begin{equation}
{\left\|u((n+1)\Delta t) - u^{n+1}\right\|} \le 3L_1(Q_*) \Delta t^2 \sum_{k=0}^n \exp(L_2(Q_*) k \Delta t) \le 3L_1(Q_*) \tau \exp(L_2(Q_*)\tau) \Delta t = \mathcal{K}(\tau,Q_*) \Delta t,
\end{equation}
from which the last inequality in~\eqref{eq:splitting-induction} follows for $n+1$. Finally, using a reverse triangle inequality and~\eqref{eq:def-detlat-star}, it holds
${\left\| u^{n+1} \right\|} \le \mathcal{K}(\tau,Q_*)\Delta t + \frac{1}{2}M_* \le M_*$,
which concludes the proof.\qed

\medskip

Let us conclude this section by providing the proof of Lemma~\ref{lemm:estimate-for-varphi}.
\begin{proof}[Proof of Lemma~\ref{lemm:estimate-for-varphi}]
The unique solution of~\eqref{eq:edo-varphi} reads (see~\cite{Terrier})
\begin{equation}
\label{eq:sol-u1S}
\varphi^{(v_2,\cdots)}_t(v_1) = \frac{v_1[r_+(v)-r_-(v)\exp(-\delta(v) t)] - r_+(v) r_-(v)(1-\exp(-\delta(v) t))}{v_1[1-\exp(-\delta(v) t)] + r_+(v)\exp(-\delta(v) t) - r_-(v)},
\end{equation}
with $r_{+}(v) = -\frac{b(v)-\sqrt{b(v)^2+4ac(v)}}{2a} > 0$ and $r_{-}(v) = -\frac{b(v)+\sqrt{b(v)^2+4ac(v)}}{2a} < 0$. In view of the definition of $r_+(v)$ and $r_-(v)$, and since $\exp(-\delta(v) t) \le 1$, the terms $r_+(v)-r_-(v)\exp(-\delta(v) t)$ and $ r_+(v)\exp(-\delta(v) t) - r_-(v) $ are positive, while $ - r_+(v) r_-(v)(1-\exp(-\delta(v) t)) $ and $ 1-\exp(-\delta(v) t) $ are non-negative. Therefore, if $v_1 > 0$, the solution $\varphi^{(v_2,\cdots)}_t(v_1)$ remains positive for all times $t\ge 0$.

We next prove the estimates~\eqref{eq:splitting-varphi-bound-t}. Since $Q(v) \le Q_*$, we have in particular ${\left\|\Alin^\alpha v\right\|} \le R(Q_*)$ in view of Lemma~\ref{lemm:estimates-A-AA}. Therefore $c(v)= {\left(\Alin^\alpha v\right)}_1 \le R(Q_*)$. Similarly, $b(v) = {\left(\Alin^\beta v\right)}_1 \le R(Q_*)$, so that
$r_+(v)^2 \le \left( b(v)^2 + 2 ac(v) \right)/(2a^2) \le \left( R(Q_*)^2+4\beta_1 R(Q_*) \right)/(8\beta_1^2)$.
In view of~\eqref{eq:sol-u1S}, it holds
\begin{equation}
\begin{aligned}
  {\left|\varphi^{(v_2,\cdots)}_t(v_1)\right|} &\le \frac{v_1(r_+(v) - r_-(v)) - r_-(v) r_+(v)}{-r_-(v)} \le 2v_1 + r_+(v)
  \le \mathcal{R}(Q_*).
\end{aligned}
\end{equation}
Moreover, in view of~\eqref{eq:edo-varphi}, $d\varphi^{(v_2,\cdots)}_t(v_1)/dt$ is uniformly bounded in time by $2\beta_1 \mathcal{R}(Q_*)^2 + R(Q_*)\mathcal{R}(Q_*) + R(Q_*)$. We finally note that
\begin{equation}
\frac{d^2\varphi_t^{(v_2,\cdots)}{\left(v_1\right)}}{dt^2}(t) = -{\left(2a^2 \varphi^{(v_2,\cdots)}_t{\left(v_1\right)} + b(v)\right)} \frac{d\varphi^{(v_2,\cdots)}_t{\left(v_1\right)}}{dt},
\end{equation}
is also uniformly bounded in time since $\varphi^{(v_2,\cdots)}_t(v_1)$ and $d\varphi^{(v_2,\cdots)}_t(v_1)/dt$ are uniformly bounded. This shows that the estimates~\eqref{eq:splitting-varphi-bound-t} hold true.
\end{proof}

\section[Proofs for the decay estimates of the solution to the diffusion equation]{Proofs for the decay estimates of the solution of~\eqref{problem:Cauchy-diffusion}}
\label{sec:appendix-proofs-FPCD}

This section is organized as follows. We first prove Theorem~\ref{thm:regularity} in Appendix~\ref{sec:appendix-decay}, with some technical estimates postponed to Appendix~\ref{sec:appendix-decay-details}. We finally prove Theorems~\ref{thm:Diff2FP} and~\ref{thm:Diff2CD} in Appendix~\ref{sec:appendix-proofs-CD-FP-Diff}.

\subsection[Proof of the Theorem]{Proof of Theorem~\ref{thm:regularity}}
\label{sec:appendix-decay}
 
The existence, uniqueness and regularity of the solution is made precise in~\cite{Terrier}. These properties follow by an application of Hörmander's Theorem in a stochastic version~\cite{hairer2011hormander}. 

Let us now turn to the decay estimates. We first prove~\eqref{eq:decay-estimates} for $n=1$ by introducing the stochastic process associated with~\eqref{problem:Cauchy-diffusion} (Steps~1-2). In Step~3 we generalize the previous steps and rely on results proved in Appendix~\ref{sec:appendix-decay-details} in order to give a proof for higher order derivatives. \\
\\
\textbf{Step 1: Reformulation as a diffusion with additive noise.} We introduce the stochastic process $(X_t)_{t\ge 0}$ defined as $X_0 = \varx$ and $dX_t = \sigma(X_t)\,dW_t$, where $(W_t)_{t\ge 0}$ is a standard Brownian motion. Using the Feynman-Kac representation formula~\cite{lebris2008existence}, the solution of~\eqref{problem:Cauchy-diffusion} can be written as $\fun(\vart,\varx) = \mathbb{E}{\left[ \Cdiff_0(X_\vart) \middle| X_0 = \varx \right]} := \mathbb{E}^\varx{\left[ \Cdiff_0(X_\vart) \right]}$. Note that the boundedness of $\Cdiff_0$ immediately gives the boundedness of $u$. We next use a Lamperti transform~\cite{pages2006lamperti} on the process $X$. Define
\begin{equation}
\varphi(\varx) = \int_0^\varx \frac{1}{\sigma(s)}\, ds.
\label{eq:phi-lamperti}
\end{equation}
The function $\varphi$ is a well defined smooth function since $\sigma$ is a positive smooth function. We then introduce the stochastic process $Y_t = \varphi(X_t)$. Using Itô's formula,
\begin{equation}
dY_t = -\frac{1}{2}\sigma^\prime{\left(\varphi^{-1}(Y_t)\right)}dt + dW_t, \qquad Y_0 = \varphi(\varx) = y.
\label{eq:Y-process}
\end{equation} 
Defining $v_0 = \Cdiff_0 \circ \varphi^{-1}$, and the function
\begin{equation}
\forall\, (t,y) \in \mathbb{R}_+ \times \mathbb{R},\qquad v(\vart,\vary) := \mathbb{E}^{y}{\left[ v_0(Y_t)\right]},
\label{eq:def-v-esperance}
\end{equation}
the values of $u$ are obtained from $u(t,x) = v(t,\varphi(x))$.\\
\\
\textbf{Step 2: Relating the first derivative of $u$ with the flow.} Introduce $\Psi = -\frac{1}{2}\sigma^\prime \circ \varphi^{-1}$, which is a  smooth bounded function with bounded derivatives (see Appendix~\ref{sec:appendix-Psi-0}), and the tangent process $\eta$ of $Y$ (\textit{i.e.} the derivative of the flow with respect to the initial condition~\cite[Chapter V, Theorem 39]{protter2013stochastic}):
\begin{equation}
d\eta_t = \Psi^\prime(Y_t)\eta_t\, dt, \qquad \eta_0 = 1.
\label{eq:tangent-process}
\end{equation}
Then (see~\cite[Chapter 1.3]{cerrai2001second}),
\begin{equation}
\frac{\partial v}{\partial \vary}(\vart,\vary) = \mathbb{E}^{y}{\left[ v^\prime_0(Y_t)\eta_t\right]}.
\label{eq:derivee-esperance}
\end{equation}
In order to bound the derivative $\partial v/\partial y$, we first notice that $\eta$ is simply the solution of an ODE with a continuous stochastic coefficient:
\begin{equation}
\eta_t = \eta_0 \exp{\left[\int_0^t \Psi^\prime(Y_s)\,ds\right]}.
\label{eq:solution-eta}
\end{equation}
Since $\Psi^\prime$ is bounded, there exists $M_\Psi \in \mathbb{R}_+^*$ such that $0 \le \eta_t \le \eta_0 \exp(M_\Psi t)$ for all $ t\ge 0$. Moreover, $v_0^\prime$ is bounded in view of Lemma~\ref{lemm:estimate-v-0} in Appendix~\ref{sec:appendix-v-0}. Therefore, there exists $L\ge 0$ such that, for all $t\ge 0$,
\begin{equation}
{\left| \frac{\partial v}{\partial y}(t,y) \right|} \le L \exp{\left(M_{\Psi} t\right)}.
\end{equation}
The estimate~\eqref{eq:decay-estimates} for $n=1$ is finally obtained by noting that $\partial_\varx u(\vart,\varx) = \partial_\varx \big[v(\vart,\varphi(\varx))\big] = \partial_\vary v(\vart,\varphi(\varx)) \varphi^\prime(\varx)$, so that
\begin{equation}
\forall\, (t,x) \in \mathbb{R}_+ \times \mathbb{R}, \qquad  {\left| \frac{\partial u}{\partial \varx}(\vart,\varx) \right|} \le \frac{L \exp{\left(M_{\Psi} t\right)}}{\sigma(\varx)}.
\end{equation}
The result then follows from Assumption~\ref{hypo:sigma-diffusion}.\\
\\
\textbf{Step 3: Generalizing to higher order derivatives.} For the remainder of the proof, let us introduce the tangent process $\eta^{(n)}$  of order $n$ of $Y_t$, recursively defined as the tangent process of $\eta^{(n-1)}$ (see Lemma~\ref{lemm:eta-n}). We also have the following results, proved in Appendix~\ref{sec:appendix-decay-details}: 
\begin{enumerate}
\item For all $n\ge 1$, there exists a non-negative function $\widetilde{K}_n \in \mathcal{C}^0(\mathbb{R}_+)$ such that (see Appendix~\ref{sec:appendix-eta-0})
\begin{equation}
\label{eq:estimate-eta-n}
\forall\, t\ge 0,\qquad 0 \le \eta^{(n)}_t \le \widetilde{K}_n(t):
\end{equation}
\item For all $n\ge 1$, the derivative of order $n$ of $v_0$ is bounded (see Appendix~\ref{sec:appendix-v-0});
\item For all $n \ge 1$, it holds, for $|x| \to +\infty$ (see Appendix~\ref{sec:appendix-phi-0}), $\varphi^{(n)}(\varx) = \mathrm{O}{\left(|x|^{-\gamma/2-n+1}\right)}$. 
\end{enumerate}
We then use the Faà di Bruno's formula~\cite{johnson2002curious} in order to write the higher order derivatives of $v$. Recall that, for $f,g \in \mathcal{C}^\infty(\mathbb{R})$ and $n\ge 1$,
\begin{equation}
\label{eq:faadibruno}
\frac{d^n}{dx^n} \big( f(g(x)) \big) = \sum_{k=1}^n f^{(k)}(g(x)) B_{n,k}{\left(g'(x),g''(x),\dots,g^{(n-k+1)}(x)\right)},
\end{equation}
where $B_{n,k}$ are the Bell polynomials~\cite{bell1927partition} (see Section~\ref{sec:appendix-bell}). Then, the Faà di Bruno's formula applied to~\eqref{eq:def-v-esperance} together with the results of~\cite{cerrai2001second} leads to the following equality: for all $n\ge 1$,
\begin{equation}
\label{eq:faadi-v}
\forall\, (t,y) \in \mathbb{R}_+ \times \mathbb{R}, \qquad \frac{\partial^n v}{\partial \vary^n}(\vart,\vary) = \mathbb{E}^\vary{\left[ \sum_{k=1}^n v_0^{(k)}(Y_t) B_{n,k}{\left(\eta_t,\eta_t^{(2)},\dots,\eta^{(n-k+1)}_t\right)} \right]}.
\end{equation}
Using~\eqref{eq:estimate-eta-n} and Lemma~\ref{lemm:estimate-v-0} in Section~\ref{sec:appendix-v-0}, there exists, for all $n\ge 1$, a non-negative function $M_n \in \mathcal{C}^0(\mathbb{R}_+)$ such that
\begin{equation}
\label{eq:majoration-v}
\forall\, n\ge 1,\qquad \forall\, (t,y) \in \mathbb{R}_+ \times \mathbb{R}, \qquad  {\left| \frac{\partial^n v}{\partial \vary^n}(\vart,\vary) \right|} \le M_n(t).
\end{equation}
We then use once again the Faa di Bruno's formula to compute the $n$-th partial derivative of $u$:
\begin{equation}
\label{eq:faadi-u}
\frac{\partial^n u}{\partial \varx^n}(\vart,\varx) = \sum_{k=1}^n \frac{\partial^{k} v}{\partial \vary^k}(\vart,\varphi(\varx)) B_{n,k}{\left(\varphi^\prime(\varx),\varphi^{(2)}(\varx),\dots,\varphi^{(n-k+1)}(\varx)\right)},
\end{equation}
so that, with the estimate~\eqref{eq:majoration-v}, one gets:
\begin{equation}
{\left| \frac{\partial^n u}{\partial \varx^n}(\vart,\varx) \right|} \le \sum_{k=1}^n M_k(t) B_{n,k}{\left({\left|\varphi^\prime(\varx)\right|},{\left|\varphi^{(2)}(\varx)\right|},\dots,{\left|\varphi^{(n-k+1)}(\varx)\right|}\right)}.
\end{equation}
Moreover, in view of Lemma~\ref{lemm:Bnk}, it holds
\begin{equation}
\label{eq:estimate-Bnk-varphi}
\forall\, 1 \le k \le n, \qquad B_{n,k}{\left({\left|\varphi^\prime(\varx)\right|},{\left|\varphi^{(2)}(\varx)\right|},\dots,{\left|\varphi^{(n-k+1)}(\varx)\right|}\right)} = \mathrm{O}{\left(|x|^{-n + k - k\gamma/2}\right)}.
\end{equation}
Therefore, we obtain that there exists a non-negative function $K_n \in \mathcal{C}^0(\mathbb{R}_+)$ such that ${\left|\partial_x^n u(\vart,\varx) \right|} \le K_n(t) |x|^{-n\gamma/2}$, which concludes the proof of Theorem~\ref{thm:regularity}.

\subsection{Some technical results on $\varphi$, $\Psi$, $\eta$ and their derivatives}
\label{sec:appendix-decay-details}

We gather in this section all the technical results used in the proof of Theorem~\ref{thm:regularity}. We will repeatedly use the Faà di Bruno's formula~\eqref{eq:faadibruno} and Bell Polynomials.

\subsubsection{Bell polynomials}
\label{sec:appendix-bell}

Bell Polynomials~\cite{bell1927partition} are defined as follows: for any $1\le k\le n$,
\begin{equation}
B_{n,k}(x_1,x_2,\dots,x_{n-k+1}) = \!\!\! \sum_{(j_1,\cdots,j_{n-k+1}) \in \mathcal{B}_{n,k}} \!\!\! \frac{n!}{j_1!j_2!\cdots j_{n-k+1}!}
{\left(\frac{x_1}{1!}\right)}^{j_1}\cdots{\left(\frac{x_{n-k+1}}{(n-k+1)!}\right)}^{j_{n-k+1}},
\end{equation}
where $\mathcal{B}_{n,k}$ is the set of all sequences $(j_1,j_2,\cdots,j_{n-k+1})$ of non-negative integers such that
\begin{equation}
\label{eq:indices-bell-polynomials}
{\left\{
\begin{aligned}
&j_1 + j_2 + \cdots + j_{n-k+1} = k,\\
&j_1 + 2 j_2 + 3 j_3 + \cdots + (n-k+1)j_{n-k+1} = n.
\end{aligned}
\right.}
\end{equation}

\subsubsection{Some estimates on $\varphi$}
\label{sec:appendix-phi-0}

\begin{lemm}
\label{lemm:estimates-varphi-0}
The function $\varphi$ defined in~\eqref{eq:phi-lamperti} is smooth and its derivatives satisfy:
\begin{equation}
\forall\, n\ge 1, \qquad  \exists\, K_n \ge 0, \qquad \forall\, |x| \ge 1, \qquad {\left|\varphi^{(n)}(x)\right|} \le \frac{K_n}{|x|^{n-1+\gamma/2}}.
\end{equation}
\end{lemm}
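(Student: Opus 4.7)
The plan is to deduce the estimate from Faà di Bruno's formula applied to the composition $f \circ \sigma$ with $f(y) = 1/y$, since $\varphi'(x) = 1/\sigma(x)$ and therefore $\varphi^{(n)}(x) = (f \circ \sigma)^{(n-1)}(x)$ for $n \ge 1$. Smoothness is immediate from the smoothness and positivity of $\sigma$. The case $n=1$ follows directly from $|\varphi'(x)| = 1/\sigma(x) \le 1/(\delta_- |x|^{\gamma/2})$ thanks to Assumption~\ref{hypo:sigma-diffusion}.

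For $n \ge 2$, the plan is to write
\begin{equation}
\varphi^{(n)}(x) = \sum_{k=1}^{n-1} f^{(k)}(\sigma(x))\, B_{n-1,k}\bigl(\sigma'(x),\sigma''(x),\dots,\sigma^{(n-k)}(x)\bigr),
\end{equation}
and then control each factor separately. For the first factor, I would use $|f^{(k)}(y)| = k!/|y|^{k+1}$ together with the lower bound $\sigma(x) \ge \delta_- |x|^{\gamma/2}$ (valid for $|x|\ge 1$) to get $|f^{(k)}(\sigma(x))| \le k!/(\delta_-^{k+1} |x|^{(k+1)\gamma/2})$. For the Bell polynomial factor, I would bound each monomial $\prod_i |\sigma^{(i)}(x)|^{j_i}$ using the hypothesis $|\sigma^{(i)}(x)| \le S_i |x|^{\gamma/2 - i}$ (for $|x|\ge 1$, $i\ge 1$), which yields a contribution of order $|x|^{\sum_i j_i(\gamma/2 - i)}$.

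The key bookkeeping is then provided by the defining constraints~\eqref{eq:indices-bell-polynomials} of the index set $\mathcal{B}_{n-1,k}$, namely $\sum_i j_i = k$ and $\sum_i i\, j_i = n-1$, which give the combined exponent $k\gamma/2 - (n-1)$ for the Bell polynomial term. Multiplying the two estimates yields, for each $k$,
\begin{equation}
\bigl|f^{(k)}(\sigma(x))\, B_{n-1,k}(\sigma'(x),\dots,\sigma^{(n-k)}(x))\bigr| \le \frac{C_{n,k}}{|x|^{(k+1)\gamma/2 - k\gamma/2 + (n-1)}} = \frac{C_{n,k}}{|x|^{n-1+\gamma/2}},
\end{equation}
so that summing over $k$ from $1$ to $n-1$ and absorbing all constants into a single $K_n$ gives the claimed bound. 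The main obstacle is essentially bookkeeping: making sure the exponents from the two sets of hypotheses on $\sigma$ (the lower bound on $\sigma$ itself and the decay on its derivatives) combine with the Faà di Bruno constraints to produce exactly the exponent $n-1+\gamma/2$ independently of $k$; otherwise the argument is elementary.
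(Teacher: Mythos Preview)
Your proposal is correct and follows essentially the same route as the paper's proof: both apply Fa\`a di Bruno's formula to $\varphi' = 1/\sigma$, bound $1/\sigma^{k+1}$ via the lower bound $\sigma(x)\ge \delta_-|x|^{\gamma/2}$, bound the Bell polynomial via the decay estimates $|\sigma^{(i)}(x)|\le S_i|x|^{\gamma/2-i}$, and use the constraints on $\mathcal{B}_{n-1,k}$ to collapse the exponent to $-(n-1+\gamma/2)$ independently of $k$. The only difference is a harmless shift of index (the paper writes the formula for $\varphi^{(n+1)}$ rather than $\varphi^{(n)}$).
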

\begin{proof}
By definition, since $\sigma$ is a smooth positive function with bounded derivatives, $\varphi$ is also a smooth function and the estimate holds true for $n=1$. Moreover, in view of Assumption~\ref{hypo:sigma-diffusion}, there exists $L_1, L_2 \in \mathbb{R}_+^*$ such that
\begin{equation}
\forall\, |x|\ge 1, \qquad L_1 |x|^{\gamma/2} \le \sigma(x) \le L_2 |x|^{\gamma/2}.
\end{equation}
Then, using the Faà-di-Bruno's formula, for all $n\ge 1$, it holds, since $\varphi^\prime(x) = 1/\sigma(x)$,
\begin{equation}
\varphi^{(n+1)}(x) = \sum_{k=1}^n \frac{(-1)^k k!}{\sigma(x)^{k+1}}B_{n,k} {\left( \sigma^\prime(x),\cdots,\sigma^{(n-k+1)}(x)\right)}.
\end{equation}
In view of the definition of Bell polynomials (see Appendix~\ref{sec:appendix-bell}) and Assumption~\ref{hypo:sigma-diffusion}, we obtain, for $|x|\ge 1$,
\begin{equation}
\begin{aligned}
&{\left|B_{n,k}{\left( \sigma^\prime(x),\cdots,\sigma^{(n-k+1)}(x)\right)} \right|} \\
&\qquad \le  \!\!\!\!\!  \mathlarger{\mathlarger{\sum}}_{\substack{(j_1,\cdots,j_{n-k+1}) \in \mathcal{B}_{n,k}}} \!\!   \frac{n!S_1^{j_1}\cdots S_{n-k+1}^{j_{n-k+1}}}{j_1!j_2!\cdots j_{n-k+1}!}
{\left(\frac{|x|^{\gamma/2-1}}{1!}\right)}^{j_1}\cdots{\left(\frac{|x|^{\gamma/2-n+k-1}}{(n-k+1)!}\right)}^{j_{n-k+1}}.
\end{aligned}
\end{equation}
Noting that $j_1 + \cdots + j_{n-k+1} = k$ and $j_1 + 2j_2 + \cdots + (n-k+1)j_{n-k+1} = n$, we obtain that there is a constant~$R_{n,k} \in \mathbb{R}_+$ such that
\begin{equation}
{\left| \frac{(-1)^k k!}{\sigma(x)^{k+1}}B_{n,k} {\left( \sigma^\prime(x),\cdots,\sigma^{(n-k+1)}(x)\right)} \right|} \le \frac{R_{n,k}}{|x|^{n+k\gamma/2}}.
\end{equation}
Since $k\ge 1$, it finally holds
\begin{equation}
{\left|\varphi^{(n+1)}(x)\right|} \le \sum_{k=1}^{n} \frac{R_{n,k}}{|x|^{n+k\gamma/2}} = \mathrm{O}{\left(|x|^{-n-\gamma/2}\right)} ,
\end{equation}
which concludes the proof.
\end{proof}

\begin{lemm}
\label{lemm:Bnk}
For all $1\le k\le n$, it holds $B_{n,k}{\left({\left|\varphi^\prime(\varx)\right|},{\left|\varphi^{(2)}(\varx)\right|},\dots,{\left|\varphi^{(n-k+1)}(\varx)\right|}\right)} = \mathrm{O}{\left(|x|^{-n + k - k\gamma/2}\right)}$.
\end{lemm}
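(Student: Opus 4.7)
The plan is to substitute the pointwise estimate from Lemma~\ref{lemm:estimates-varphi-0} directly into the explicit expansion of the Bell polynomial and use the two linear constraints~\eqref{eq:indices-bell-polynomials} on the admissible multi-indices to collapse the resulting exponent. Since $B_{n,k}$ is a finite sum of monomials with non-negative coefficients, it suffices to bound each monomial separately.

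More precisely, I would fix $1\le k\le n$ and pick $|x|\ge 1$, so that Lemma~\ref{lemm:estimates-varphi-0} yields a constant $C_i>0$ with $|\varphi^{(i)}(x)|\le C_i |x|^{1-i-\gamma/2}$ for every $1\le i\le n-k+1$. Substituting these bounds into
\[
B_{n,k}\!\left(|\varphi'(x)|,\dots,|\varphi^{(n-k+1)}(x)|\right)
=\!\!\!\sum_{(j_1,\dots,j_{n-k+1})\in\mathcal{B}_{n,k}}\!\!\!
\frac{n!}{j_1!\cdots j_{n-k+1}!}\prod_{i=1}^{n-k+1}\!\left(\frac{|\varphi^{(i)}(x)|}{i!}\right)^{j_i},
\]
the generic term becomes bounded by a constant times
\[
\prod_{i=1}^{n-k+1}|x|^{(1-i-\gamma/2)j_i}
=|x|^{\sum_i j_i\,-\,\sum_i i\,j_i\,-\,(\gamma/2)\sum_i j_i}.
\]

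Invoking the two defining relations $\sum_i j_i=k$ and $\sum_i i j_i=n$ from~\eqref{eq:indices-bell-polynomials}, the exponent simplifies to $k-n-k\gamma/2$ for every $(j_1,\dots,j_{n-k+1})\in\mathcal{B}_{n,k}$. Since $\mathcal{B}_{n,k}$ is finite, summing these uniform contributions still yields a bound of order $|x|^{-n+k-k\gamma/2}$, which is the claimed estimate.

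The argument is essentially bookkeeping with exponents, and the only mildly delicate point is to notice that both combinatorial identities encoded in $\mathcal{B}_{n,k}$ are needed simultaneously: the constraint $\sum j_i=k$ converts the $-\gamma/2$ contributions into $-k\gamma/2$, while the constraint $\sum i j_i=n$ converts the algebraic part $\sum j_i-\sum i j_i$ into $k-n$. There is no real obstacle, merely a verification that the two constraints conspire to give an exponent independent of the particular multi-index, so that the finite sum inherits the same asymptotics as any single monomial.
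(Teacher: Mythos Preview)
Your proposal is correct and follows essentially the same approach as the paper: both arguments substitute the pointwise bound $|\varphi^{(i)}(x)|\le K_i|x|^{1-i-\gamma/2}$ from Lemma~\ref{lemm:estimates-varphi-0} into the explicit definition of $B_{n,k}$, and then use the two constraints $\sum_i j_i=k$ and $\sum_i i j_i=n$ from~\eqref{eq:indices-bell-polynomials} to reduce every monomial's exponent to the common value $-n+k-k\gamma/2$. Your write-up is in fact slightly more transparent than the paper's in singling out how each of the two combinatorial identities contributes to the final exponent.
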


\begin{proof}
In view of Lemma~\ref{lemm:estimates-varphi-0}, there exists $\widetilde{S}_{n,k} \in \mathbb{R}_+$ such that
\begin{equation}
B_{n,k}{\left({\left|\varphi^\prime(\varx)\right|},\dots,{\left|\varphi^{(n-k+1)}(\varx)\right|}\right)} \le \widetilde{S}_{n,k} \sum_{(j_1,\cdots,j_{n-k+1}) \in \mathcal{B}_{n,k}} 
{\left(|x|^{-\gamma/2}\right)}^{j_1}\cdots{\left(|x|^{\gamma/2-n+k}\right)}^{j_{n-k+1}} 
\end{equation}
Note that
\[
{\left(|x|^{-\gamma/2}\right)}^{j_1}{\left(|x|^{-\gamma/2-1}\right)}^{j_2}\cdots{\left(|x|^{\gamma/2-n+k}\right)}^{j_{n-k+1}} = |x|^{(1-\gamma/2)(j_1+\cdots+j_{n-k+1})} |x|^{-(j_1+2j_2+\cdots+(n-k+1)j_{n-k+1})} ,
\]
so that, in view of Appendix~\ref{eq:indices-bell-polynomials},
\begin{equation}
0 \le B_{n,k}{\left({\left|\varphi^\prime(\varx)\right|},{\left|\varphi^{(2)}(\varx)\right|},\dots,{\left|\varphi^{(n-k+1)}(\varx)\right|}\right)}  \le \widetilde{S}_{n,k} \sum_{(j_1,\cdots,j_{n-k+1}) \in \mathcal{B}_{n,k}} |x|^{-k\gamma/2 -n +k},
\end{equation}
which gives the claimed estimate. 
\end{proof}

\subsubsection{On the derivatives of $v_0$}
\label{sec:appendix-v-0}

The following technical result holds true in the framework of Theorem~\ref{thm:regularity} in view of Assumption~\ref{hypo:C0}.
\begin{lemm}
\label{lemm:estimate-v-0}
The function $v_0$ is smooth and its derivatives are uniformly bounded on $\mathbb{R}$.
\end{lemm}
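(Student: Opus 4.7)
The plan is to apply the Faà di Bruno formula~\eqref{eq:faadibruno} to $v_0 = \Cdiff_0 \circ \varphi^{-1}$, exploiting the fact that Assumption~\ref{hypo:C0} is precisely calibrated so that each factor of $\sigma$ produced when differentiating through $\varphi^{-1}$ is compensated by a corresponding decay of the matching derivative of $\Cdiff_0$. Smoothness of $v_0$ is automatic from smoothness of $\Cdiff_0$ and $\varphi^{-1}$. Since any $v_0^{(n)}$ is then continuous on $\mathbb{R}$, it suffices to control its behavior as $|y|\to+\infty$, equivalently as $|\varphi^{-1}(y)|\to+\infty$ (note that $\varphi$ is a diffeomorphism of $\mathbb{R}$ because $\varphi'=1/\sigma>0$).

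Writing $\psi := \varphi^{-1}$ for brevity, the Faà di Bruno formula gives
$$
v_0^{(n)}(y) = \sum_{k=1}^n \Cdiff_0^{(k)}(\psi(y))\, B_{n,k}\bigl(\psi'(y),\psi''(y),\ldots,\psi^{(n-k+1)}(y)\bigr).
$$
The key intermediate step is to prove by induction on $n$ that $\psi^{(n)}$ is a polynomial in $\sigma(\psi), \sigma'(\psi),\ldots,\sigma^{(n-1)}(\psi)$, and that it satisfies the growth bound
$$
\bigl|\psi^{(n)}(y)\bigr| = \mathrm{O}\bigl(|\psi(y)|^{n\gamma/2 - n + 1}\bigr) \qquad \text{as } |y|\to +\infty.
$$
The base case $\psi'=\sigma\circ\psi$ is immediate from Assumption~\ref{hypo:sigma-diffusion}. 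For the induction step, differentiating any monomial $\prod_i \sigma^{(m_i)}(\psi)^{a_i}$ in $y$ produces terms each carrying one extra factor $\psi' = \sigma(\psi) = \mathrm{O}(|\psi|^{\gamma/2})$ together with one order increase of some $\sigma^{(m_j)}$ into $\sigma^{(m_j+1)}$, the latter losing one power of $|\psi|$ by Assumption~\ref{hypo:sigma-diffusion}. The net change in the exponent of $|\psi|$ is thus $\gamma/2 - 1$, which matches the desired recursion from $n$ to $n+1$.

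With this in hand, a monomial $\prod_i (\psi^{(i)}(y))^{j_i}$ in the Bell polynomial, constrained by $\sum_i j_i = k$ and $\sum_i i\, j_i = n$, has magnitude
$$
\mathrm{O}\!\left(|\psi(y)|^{\sum_i j_i(i\gamma/2 - i + 1)}\right) = \mathrm{O}\!\left(|\psi(y)|^{n\gamma/2 - n + k}\right),
$$
while Assumption~\ref{hypo:C0}, together with $\sigma_0(q) = |q|^{\gamma/2}$ for $|q|\ge 1$, yields $|\Cdiff_0^{(k)}(\psi(y))| \le R_k |\psi(y)|^{-k\gamma/2}$ when $|\psi(y)|\ge 1$. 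Multiplying the two estimates, each summand defining $v_0^{(n)}(y)$ is of order $|\psi(y)|^{(n-k)(\gamma/2 - 1)}$; since $\gamma/2 \le 1/4 < 1$ and $n-k\ge 0$, every exponent is nonpositive, so each summand remains bounded as $|\psi(y)|\to +\infty$. On the compact set $\{|\psi(y)|\le 1\}$, continuity and the lower bound on $\sigma_0$ ensure the estimate as well.

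The main obstacle is the inductive growth estimate for $\psi^{(n)}$: although an explicit closed form (via the general Faà di Bruno formula for inverse functions) is combinatorially heavy, the saving grace is that the total exponent of $|\psi|$ in each resulting monomial depends only on $n$ and not on the particular monomial, which is exactly what allows the clean bound $\mathrm{O}(|\psi|^{n\gamma/2-n+1})$. The constraint $\gamma \le 1/2$ from Assumption~\ref{hypo:sigma-diffusion} enters only in the very last step, to guarantee nonpositivity of the final exponent.
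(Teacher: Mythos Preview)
Your proof is correct and follows essentially the same strategy as the paper: both arguments express $v_0^{(n)}$ as a combination of terms $\Cdiff_0^{(k)}\circ\varphi^{-1}$ multiplied by polynomials in $\sigma,\sigma',\ldots$ composed with $\varphi^{-1}$, and then bound each term using Assumptions~\ref{hypo:sigma-diffusion} and~\ref{hypo:C0}. The only difference is organizational---you apply Fa\`a di Bruno and first isolate the growth estimate $\psi^{(n)} = \mathrm{O}(|\psi|^{n\gamma/2-n+1})$, whereas the paper inducts directly on the structure of $P_n$ with $v_0^{(n)} = P_n\circ\varphi^{-1}$ and tracks an explicit index set; the resulting exponent bookkeeping is identical.
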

\begin{proof}
By definition, since $\sigma$ is a positive continuous function, $\varphi$ is an increasing continuous function, and is therefore invertible. Moreover, since $\varphi^\prime = 1/\sigma$ is positive, the inverse function $\varphi^{-1}$ is also differentiable and its first derivative reads ${\left(\varphi^{-1}\right)}^\prime(y) = \sigma(\varphi^{-1}(y))$. In fact $\varphi^{-1}$ is smooth and its $n$-th order derivative is~${\left(\varphi^{-1}\right)}^{(n)} = R_n{\left(\sigma,\sigma^\prime,\cdots,\sigma^{(n-1)}\right)}\circ \varphi^{-1}$, where $R_n$ is a polynomial related to the Bell polynomials. Since $\Cdiff_0$ is smooth, this proves that $v_0 = \Cdiff_0 \circ \varphi^{-1}$ is smooth.

Next, in view of the definition of $v_0$, it holds $v_0^\prime(y) = \Cdiff_0^\prime(\varphi^{-1}(y))/\varphi^\prime(\varphi^{-1}(y)) = {\left(\Cdiff^\prime \sigma\right)} {\left(\varphi^{-1}(y)\right)}$. Note that, in view of Assumption~\ref{hypo:C0}, $\Cdiff_0^\prime \sigma$ is bounded. Therefore, there exists $R_1 \ge 0$ such that $|v_0^\prime(y)| \le R_1$ for all $y \in \mathbb{R}$. A similar argument is used for higher order derivatives. We first prove that the derivative of order $n$ of $v_0$ reads $v_0^{(n)} = P_n \circ \varphi^{-1}$, with
\begin{equation}
\label{eq:v_0-Pn}
P_n = \Cdiff_0^{(n)}\sigma^n + \sum_{k = 1}^{n-1} \Cdiff_0^{(k)} \sigma^k {\left[ \sum_{j=1}^k \sum_{{\left(\ell^k_1,\cdots,\ell^k_j\right)}\in \mathcal{L}_j^k} c_{\left(\ell^k_1,\cdots,\ell^k_j\right)} \sigma^{\ell^k_1} {\left(\sigma^\prime\right)}^{\ell^k_2} \cdots {\left( \sigma^{(j-1)}\right)}^{\ell^k_j} \right]},
\end{equation}
where $\mathcal{L}_j^k = {\left\{{\left(\ell^k_1,\cdots,\ell^k_j\right)} \in \mathbb{N}^j\,\middle|\,\ell^k_1 + \cdots + \ell_j^k \le \ell^k_2 + 2\ell_3^k + \cdots + (j-1)\ell^k_j\right\}}$ and  $c_{\left(\ell^k_1,\cdots,\ell^k_j\right)}$ are real coefficients. Suppose that~\eqref{eq:v_0-Pn} holds true for some integer $n\ge 1$. Since $v_0^{(n+1)} = \sigma P_n^\prime \circ \varphi^{-1}$, it suffices to prove that $P_{n+1} = \sigma P_n^\prime$ is of the form~\eqref{eq:v_0-Pn}. It holds
\begin{align}
\sigma P_n^\prime =&\ \Cdiff_0^{(n+1)}\sigma^{n+1} + n \Cdiff_0^{(n)} \sigma^n \sigma^\prime \\
&+ \sum_{k = 1}^{n-1} \Cdiff_0^{(k+1)} \sigma^{k+1} {\left[ \sum_{j=1}^{k} \sum_{{\left(\ell^k_1,\cdots,\ell^k_j\right)}\in \mathcal{L}_j^k} c_{\left(\ell^k_1,\cdots,\ell^k_j\right)} \sigma^{\ell^k_1} {\left(\sigma^\prime\right)}^{\ell^k_2} \cdots {\left( \sigma^{(j-1)}\right)}^{\ell^k_j} \right]} \\
&+ \sum_{k = 1}^{n-1} k \Cdiff_0^{(k)} \sigma^k {\left[ \sum_{j=1}^{k} \sum_{{\left(\ell^k_1,\cdots,\ell^k_j\right)}\in \mathcal{L}_j^k} c_{\left(\ell^k_1,\cdots,\ell^k_j\right)} \sigma^{\ell^k_1} {\left(\sigma^\prime\right)}^{\ell^k_2+1} \cdots {\left( \sigma^{(j-1)}\right)}^{\ell^k_j} \right]} \\
&+ \sum_{k = 1}^{n-1} \Cdiff_0^{(k)} \sigma^k {\left[ \sum_{j=1}^{k} \sum_{{\left(\ell^k_1,\cdots,\ell^k_j\right)}\in \mathcal{L}_j^k} \ell_1^k c_{\left(\ell^k_1,\cdots,\ell^k_j\right)} \sigma^{\ell^k_1} {\left(\sigma^\prime\right)}^{\ell^k_2+1} \cdots {\left( \sigma^{(j-1)}\right)}^{\ell^k_j} \right]} + \cdots \\
&+ \sum_{k = 1}^{n-1} \Cdiff_0^{(k)} \sigma^k {\left[ \sum_{j=1}^{k} \sum_{{\left(\ell^k_1,\cdots,\ell^k_j\right)}\in \mathcal{L}_j^k} \ell_j^k c_{\left(\ell^k_1,\cdots,\ell^k_j\right)} \sigma^{\ell^k_1+1} {\left(\sigma^\prime\right)}^{\ell^k_2} \cdots {\left( \sigma^{(j-1)}\right)}^{\ell^k_j-1} \sigma^{(j)} \right]},
\end{align}
which, by rearranging the terms and noting that $(j-1){\left(\ell^k_j-1\right)} + j{\left(\ell_{j+1}^k+1\right)} = (j-1)\ell^k_j + j\ell_{j+1}^k + 1$, reads as~\eqref{eq:v_0-Pn} with $n$ replaced by $n+1$. We next use Assumptions~\ref{hypo:sigma-diffusion} and~\ref{hypo:C0}. The terms~$\Cdiff_0^{(k)}\sigma^k$ are indeed bounded while the terms $\sigma^{\ell^k_1} {\left(\sigma^\prime\right)}^{\ell^k_2} \cdots {\left( \sigma^{(j-1)}\right)}^{\ell^k_j}$ are at most of order $|q|^{\gamma/2{\left(\ell^k_1 + \cdots + \ell^k_j\right)} - {\left(\ell^k_2 + \cdots + (j-1)\ell^k_j\right)}}$ for~$|q| \ge 1$. This concludes the proof since $0\le \gamma\le 1/2$ and $\ell^k_1 + \cdots + \ell_j^k \le \ell^k_2 + 2\ell_3^k + \cdots + (j-1)\ell^k_j$.
\end{proof}

\subsubsection{The function $\Psi$ is smooth and bounded with bounded derivatives}
\label{sec:appendix-Psi-0}

We prove here the following result.
\begin{lemm}
\label{lemm:Psi-derivatives}
The function $\Psi = -\frac{1}{2}\sigma^\prime \circ \varphi^{-1}$ is a smooth and bounded function with bounded derivatives on $\mathbb{R}$.
\end{lemm}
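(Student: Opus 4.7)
The plan is as follows. Smoothness of $\Psi$ is immediate: since $\sigma$ is smooth and strictly positive, $\varphi$ defined in~\eqref{eq:phi-lamperti} is a smooth strictly increasing function. Moreover $\varphi^\prime = 1/\sigma \ge 1/(\sigma_\mathrm{b}(x)\sigma_0(x))$, and the polynomial growth of $\sigma$ from Assumption~\ref{hypo:sigma-diffusion} ensures $\varphi(\pm\infty)=\pm\infty$, so $\varphi$ is a smooth bijection from $\mathbb{R}$ onto $\mathbb{R}$; by the inverse function theorem, $\varphi^{-1}$ is smooth, hence so is $\Psi = -\tfrac{1}{2}\sigma^\prime \circ \varphi^{-1}$. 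Boundedness of $\Psi$ itself follows from the uniform bound $\|\sigma'\|_{\mathcal{C}^0} < +\infty$ granted by Assumption~\ref{hypo:sigma-diffusion}.

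For boundedness of the higher derivatives, I would first establish by induction on $m\ge 1$ that
\[
(\varphi^{-1})^{(m)}(y) = P_m\bigl(\sigma, \sigma^\prime,\dots, \sigma^{(m-1)}\bigr)\bigl(\varphi^{-1}(y)\bigr),
\]
where $P_m$ is a polynomial whose every monomial $\prod_{i=0}^{m-1}(\sigma^{(i)})^{a_i}$ satisfies the structural constraints $\sum_{i} a_i = m$ and $\sum_{i} i\,a_i = m-1$. The base case is $(\varphi^{-1})^\prime = \sigma \circ \varphi^{-1}$, and differentiating the polynomial representation once (using again $(\varphi^{-1})^\prime = \sigma\circ\varphi^{-1}$) propagates both invariants. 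This is analogous to the structural lemma for $v_0^{(n)}$ appearing in the proof of Lemma~\ref{lemm:estimate-v-0}, and is a purely algebraic computation.

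The second step is to apply the Faà di Bruno formula~\eqref{eq:faadibruno} to $\Psi = -\tfrac{1}{2}\sigma^\prime \circ \varphi^{-1}$, writing, with $x = \varphi^{-1}(y)$,
\[
\Psi^{(n)}(y) = -\frac{1}{2}\sum_{k=1}^n \sigma^{(k+1)}(x)\, B_{n,k}\!\left((\varphi^{-1})^\prime(y),\dots,(\varphi^{-1})^{(n-k+1)}(y)\right).
\]
For $|x|\ge 1$, Assumption~\ref{hypo:sigma-diffusion} yields $|\sigma^{(j)}(x)| \le S_j |x|^{\gamma/2-j}$ for $j\ge 1$ and $\sigma(x) = \mathrm{O}(|x|^{\gamma/2})$. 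Combining this with the structural form of $(\varphi^{-1})^{(i)}$ established above shows that $(\varphi^{-1})^{(i)}(y) = \mathrm{O}\bigl(|x|^{(\gamma/2-1)i+1}\bigr)$. A generic Bell-polynomial monomial $\prod_i \bigl((\varphi^{-1})^{(i)}(y)\bigr)^{j_i}$ with $\sum_i j_i = k$ and $\sum_i i j_i = n$ therefore satisfies
\[
\prod_i \bigl((\varphi^{-1})^{(i)}(y)\bigr)^{j_i} = \mathrm{O}\bigl(|x|^{(\gamma/2-1)n + k}\bigr),
\]
and multiplying by $|\sigma^{(k+1)}(x)| = \mathrm{O}(|x|^{\gamma/2 - k - 1})$ gives the uniform bound $\mathrm{O}\bigl(|x|^{(\gamma/2-1)(n+1)}\bigr)$ for each summand. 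Since $\gamma/2 \le 1/4 < 1$, the exponent is strictly negative and each summand decays as $|x|\to+\infty$, so $\Psi^{(n)}$ is bounded on $\{|y|\ge \varphi(1)\}\cup\{|y|\le \varphi(-1)\}$. On the complementary compact set, $\Psi^{(n)}$ is continuous and therefore bounded. This yields the global bound.

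The main obstacle is the careful exponent bookkeeping in the Faà di Bruno expansion: one has to verify that the two constraints $\sum j_i = k$ and $\sum i j_i = n$ on the Bell-polynomial indices combine with the structural exponents of the $(\varphi^{-1})^{(i)}$'s to produce the clean cancellation $(\gamma/2-1)(n+1)$ in the final exponent. Everything else (smoothness, boundedness of $\Psi$ itself, treatment on the compact set) is routine.
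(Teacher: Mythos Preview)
Your proposal is correct and follows essentially the same approach as the paper: both arguments reduce to showing that $\Psi^{(n)}$, written back in the $x$-variable, is a polynomial in $\sigma,\sigma',\dots$ whose monomials carry a nonpositive net exponent under Assumption~\ref{hypo:sigma-diffusion}. The only difference is organizational: the paper directly reuses the structural formula~\eqref{eq:v_0-Pn} from Lemma~\ref{lemm:estimate-v-0} with $\mathfrak{C}_0$ replaced by $-\tfrac{1}{2}\sigma'$, whereas you first isolate the structure of $(\varphi^{-1})^{(m)}$ and then apply Fa\`a di Bruno to $\sigma'\circ\varphi^{-1}$; the exponent bookkeeping and the final bound $\mathrm{O}(|x|^{(\gamma/2-1)(n+1)})$ coincide.
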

\begin{proof}
By definition, $\Psi$ is smooth, and bounded since $\sigma^\prime$ is bounded. Then, following the proof of Lemma~\ref{lemm:estimate-v-0} and noting that $\Psi = \mathfrak{S}_0 \circ \varphi^{-1}$, with $\mathfrak{S}_0 = -\sigma_0^\prime/2$, is similar to $v_0 = \Cdiff_0 \circ \varphi^{-1}$, the derivative of order $n$ of $\Psi$ reads $\Psi^{(n)} = R_n\circ\varphi^{-1}$, with
\begin{equation}
\label{eq:Psi-Rn}
R_n = \mathfrak{S}_0^{(n)}\sigma^n + \sum_{k = 1}^{n-1} \mathfrak{S}_0^{(k)} \sigma^k {\left[ \sum_{j=1}^{k} \sum_{{\left(\ell^k_1,\cdots,\ell^k_j\right)}\in \mathcal{L}_j^k} c_{\left(\ell^k_1,\cdots,\ell^k_j\right)} \sigma^{\ell^k_1} {\left(\sigma^\prime\right)}^{\ell^k_2} \cdots {\left( \sigma^{(j-1)}\right)}^{\ell^k_j} \right]},
\end{equation}
where $\mathcal{L}_j^k = {\left\{{\left(\ell^k_1,\cdots,\ell^k_j\right)} \in \mathbb{N}^j\,\middle|\,\ell^k_1 + \cdots + \ell_j^k \le \ell^k_2 + 2\ell_3^k + \cdots + (j-1)\ell^k_j\right\}}$ and  $c_{\left(\ell^k_1,\cdots,\ell^k_j\right)}$ are real coefficients. The conclusion follows by noting that $\mathfrak{S}_0^{(k)}\sigma^k$ is bounded for all $k\ge 1$.
\end{proof}

\subsubsection{On the tangent processes of $Y$}
\label{sec:appendix-eta-0}

The following technical result holds true in the framework of Theorem~\ref{thm:regularity} in view of Lemma~\ref{lemm:Psi-derivatives}.

\begin{lemm}
\label{lemm:eta-n}
Let $\eta^{(n)}$ be the tangent process of order $n$ of $Y$, starting from $\eta_0^{(n)} = 0$ for $n\ge 2$ and $\eta^{(1)}_0 = 1$. Then, there exists a non-negative function $\widetilde{K}_n \in \mathcal{C}^0(\mathbb{R}_+)$ such that $0 \le \eta_t^{(n)} \le \widetilde{K}_n(t)$ for all $t \ge 0$. 
\end{lemm}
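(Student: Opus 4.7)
The plan is to proceed by induction on $n \ge 1$, deriving at each level a linear pathwise ODE for $\eta^{(n)}_t$ and controlling it via Duhamel's formula together with the uniform bounds on all derivatives of $\Psi = -\tfrac{1}{2}\sigma' \circ \varphi^{-1}$ obtained in Lemma~\ref{lemm:Psi-derivatives}.

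For the base case $n=1$, the explicit representation $\eta^{(1)}_t = \exp\bigl(\int_0^t \Psi'(Y_s)\,ds\bigr)$ from~\eqref{eq:solution-eta} is manifestly positive and, since $\|\Psi'\|_\infty$ is finite, it is bounded by $\exp(\|\Psi'\|_\infty t) =: \widetilde{K}_1(t)$, which gives both inequalities. For the induction step, assume the claim for all orders up to $n-1$. Differentiating $n$ times in the initial condition $y$ the SDE~\eqref{eq:Y-process} — equivalently, iterating the tangent construction — and invoking the pathwise Faà di Bruno formula produces a linear ODE of the form
\begin{equation*}
\frac{d\eta^{(n)}_t}{dt} = \Psi'(Y_t)\,\eta^{(n)}_t + G_n\bigl(Y_t, \eta^{(1)}_t, \ldots, \eta^{(n-1)}_t\bigr), \qquad \eta^{(n)}_0 = 0,
\end{equation*}
where $G_n$ is a polynomial in its last $n-1$ arguments whose coefficients are the higher derivatives $\Psi^{(k)}(Y_t)$, $2 \le k \le n$, multiplied by Bell-polynomial combinatorial factors.

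Duhamel's formula then yields
\begin{equation*}
\eta^{(n)}_t = \int_0^t \exp\!\Bigl(\int_s^t \Psi'(Y_u)\,du\Bigr)\,G_n\bigl(Y_s, \eta^{(1)}_s, \ldots, \eta^{(n-1)}_s\bigr)\,ds.
\end{equation*}
The exponential factor is bounded by $\exp(\|\Psi'\|_\infty t)$; each $|\Psi^{(k)}|$ is majorized by a universal constant via Lemma~\ref{lemm:Psi-derivatives}; and the induction hypothesis majorizes each $|\eta^{(j)}_s|$ by $\widetilde{K}_j(s)$. Substituting these estimates into a triangle inequality and integrating in $s \in [0,t]$ produces a continuous function $\widetilde{K}_n \in \mathcal{C}^0(\mathbb{R}_+)$ such that $|\eta^{(n)}_t| \le \widetilde{K}_n(t)$ for all $t \ge 0$, which is the desired upper bound.

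The main obstacle is the lower bound $\eta^{(n)}_t \ge 0$ for $n \ge 2$: the forcing $G_n$ involves the higher derivatives $\Psi^{(k)}$, whose signs are not constrained under Assumption~\ref{hypo:sigma-diffusion}, so the pathwise derivative $\partial^n_y Y_t$ itself need not be non-negative. To obtain the lemma as stated, I would work not with the signed pathwise derivative but with a non-negative dominating process $\eta^{(n)}_t$ defined by the same Duhamel identity in which $G_n$ is replaced by the polynomial $|G_n|$ obtained by substituting $|\Psi^{(k)}(Y_s)|$ for $\Psi^{(k)}(Y_s)$ and $|\eta^{(j)}_s|$ for $\eta^{(j)}_s$. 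This alternative process is non-negative by construction, pointwise dominates $|\partial^n_y Y_t|$, inherits the same upper bound $\widetilde{K}_n(t)$ derived above, and suffices for the single use made of the lemma, namely in~\eqref{eq:faadi-v}: since the Bell polynomials are monotone nondecreasing in each of their arguments when the arguments are non-negative, substituting these dominating processes can only strengthen the resulting bound $|\partial_y^n v(t,y)| \le M_n(t)$, which is the only estimate actually required for the proof of Theorem~\ref{thm:regularity}.
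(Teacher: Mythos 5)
Your upper-bound argument is essentially the paper's own proof: the same induction, the same linear pathwise ODE $d\eta^{(n)}_t = \Psi'(Y_t)\eta^{(n)}_t\,dt + G_n(Y_t,\eta^{(1)}_t,\dots,\eta^{(n-1)}_t)\,dt$ with $G_n$ a Bell-polynomial-type combination of higher derivatives of $\Psi$ and lower-order tangents, the same Duhamel representation (the paper's~\eqref{eq:eta-n}), and the same ingredients for the bound, namely Lemma~\ref{lemm:Psi-derivatives} and the induction hypothesis. So for the quantitative estimate $|\eta^{(n)}_t|\le \widetilde{K}_n(t)$ your proposal matches the paper.

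Where you diverge is on the lower bound $\eta^{(n)}_t\ge 0$ for $n\ge 2$, and your concern is well founded: the paper's proof only produces the Duhamel identity and then asserts that it ``allows to conclude'', but that identity yields non-negativity only if $G_n\ge 0$, which is not argued and is not guaranteed under Assumption~\ref{hypo:sigma-diffusion} (already $G_2 = \Psi''(Y_t)(\eta^{(1)}_t)^2$ has the sign of $\Psi''$, which is unconstrained). So the sign claim in the lemma is not substantiated by the paper either, and your observation that it is also not needed is correct: in~\eqref{eq:faadi-v} the Faà di Bruno identity holds with the true (signed) tangent processes, and since the Bell polynomials have non-negative coefficients one has $|B_{n,k}(\eta_t,\dots,\eta^{(n-k+1)}_t)| \le B_{n,k}(|\eta_t|,\dots,|\eta^{(n-k+1)}_t|) \le B_{n,k}(\widetilde{K}_1(t),\dots)$, so only the absolute bounds enter the estimate~\eqref{eq:majoration-v}. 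Your replacement of the sign statement by a dominating non-negative process (or simply by the bound on $|\eta^{(n)}_t|$) is therefore a legitimate and arguably cleaner formulation; just state explicitly that~\eqref{eq:faadi-v} is kept with the true derivatives of the flow and that only the subsequent majorization uses the dominating quantities.
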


\begin{proof}
We show by induction that for all $1\le k\le n$, the process $\eta^{(k)}$ is bounded and solution of
\begin{equation}
\label{eq:hypo-tangent}
    d\eta_t^{(k)} = \Psi^\prime(Y_t)\eta^{(k)}_t dt + G_k{\left(Y_t,\eta_t^{(1)},\cdots,\eta_t^{(k-1)}\right)}\, dt,
\end{equation}
where $G_1 = 0$ and, for all $2\le k \le n$,
\begin{equation}
\label{eq:eta-Gn}
G_k{\left(Y_t,\eta_t^{(1)},\cdots,\eta_t^{(k-1)}\right)} = \sum_{p=2}^k \Psi^{(p)}(Y_t) \sum_{{\left(\ell^p_1,\cdots,\ell^p_{k-1}\right)} \in \mathcal{J}_{k-1}^{p}} c_{\ell^{p}_1,\cdots,\ell^{p}_{k-1}} {\left(\eta_t^{(1)}\right)}^{\ell_1^{p}}\cdots {\left(\eta_t^{(k-1)}\right)}^{\ell_{k-1}^{p}},
\end{equation}
is a bounded function, $\mathcal{J}_k^{p} = {\left\{ {\left(\ell_1^{p},\cdots,\ell_k^{p} \right)} \in \mathbb{N}^k\, \middle| \, \ell_1^{p}+\cdots+\ell_k^{p} = p \right\}}$ and $c_{\ell^{p}_1,\cdots,\ell^{p}_{k-1}}$ are real coefficients. The induction basis $k=1$ holds true with by the definition~\eqref{eq:tangent-process} of the tangent process. For the inductive step, let us assume that~\eqref{eq:hypo-tangent} is true for $1\le k\le n$. Then (see~\cite{protter2013stochastic}),
\begin{equation}
    d\eta_t^{(n+1)} = \Psi^\prime(Y_t)\eta^{(n+1)}_t dt + G_{n+1}{\left(Y_t,\eta_t^{(1)},\cdots,\eta_t^{(n)}\right)} dt, \qquad \eta_0^{(n+1)} = 0,
\end{equation}
with
\begin{equation}
\begin{aligned}
&G_{n+1}{\left(Y_t,\eta_t^{(1)},\cdots,\eta_t^{(n)}\right)} = \Psi^{(2)}(Y_t)\eta^{(1)}_t \eta^{(n)}_t \\
& \qquad + \sum_{p=2}^n \Psi^{(p+1)}(Y_t) \eta_t^{(1)} \!\!\! \sum_{{\left(\ell^p_1,\cdots,\ell^p_{n-1}\right)} \in \mathcal{J}_{n-1}^{p}} \!\!\! c_{\left(\ell^{p}_1,\cdots,\ell^{p}_{n-1}\right)} {\left(\eta_t^{(1)}\right)}^{\ell_1^{p}}\cdots {\left(\eta_t^{(n-1)}\right)}^{\ell_{n-1}^{p}}\\
&\qquad + \sum_{p=2}^n \Psi^{(p)}(Y_t) \!\!\! \sum_{{\left(\ell^p_1,\cdots,\ell^p_{n-1}\right)} \in \mathcal{J}_{n-1}^{p}} \!\!\! \ell^{p}_1 c_{\left(\ell^{p}_1,\cdots,\ell^{p}_{n-1}\right)} {\left(\eta_t^{(1)}\right)}^{\ell_1^{p}-1}{\left(\eta_t^{(2)}\right)}^{\ell_2^{p}+1}\cdots {\left(\eta_t^{(n-1)}\right)}^{\ell_{n-1}^{p}} \\
&\qquad + \cdots + \sum_{p=2}^n \Psi^{(p)}(Y_t) \!\!\! \sum_{{\left(\ell^p_1,\cdots,\ell^p_{n-1}\right)} \in \mathcal{J}_{n-1}^{p}} \!\!\! \ell^{p}_{n-1} c_{\left(\ell^{p}_1,\cdots,\ell^{p}_{n-1}\right)} {\left(\eta_t^{(1)}\right)}^{\ell_1^{p}}\cdots {\left(\eta_t^{(n-1)}\right)}^{\ell_{n-1}^{p}-1}\eta_t^{(n)},
\end{aligned}
\end{equation}
which, by rearranging the terms, reads as~\eqref{eq:eta-Gn}. Then, in view of~\eqref{eq:eta-Gn}, since $\Psi$ has bounded derivatives and we assumed $\eta^{(k)}$ to be bounded for $1 \le k \le n$, the function $G_{n+1}$ is bounded. Using~\eqref{eq:hypo-tangent} (which is in fact a simple ODE with random coefficients), it holds
\begin{equation}
\label{eq:eta-n}
\eta^{(n+1)}_t = \exp{\left( \int_0^t \Psi^\prime(Y_s)\, ds \right)} \int_0^t G_{n+1}(Y_s,\eta_s^{(1)}, \cdots,\eta_s^{(n)}) \exp{\left(-\int_0^s \Psi^\prime(Y_u)\, du \right)}\,ds.
\end{equation}
This equality allows to conclude.
\end{proof}

\subsection{Proofs on the relation between Becker--D\"oring equations and their Fokker--Planck approximation}
\label{sec:appendix-proofs-CD-FP-Diff}

\subsubsection[Proof]{Proof of Theorem~\ref{thm:Diff2FP}}

The proof mainly consists in rewriting rigorously what we presented in Section~\ref{sec:formal-results}, but with the reverse change of variable. Let us first note that ${\left(G^{-1}\right)}^{\prime} = F \circ G^{-1}$, so that
\begin{equation}
\label{eq:derivatives-Q}
\forall\, (t,x) \in Z_M,\qquad \frac{\partial Q}{\partial x}(t,x) = \frac{F{\left(Q(t,x)\right)}}{F(x)}, \qquad \frac{\partial Q}{\partial t}(t,x) = -F(Q(t,x)).
\end{equation}
Then, by the chain rule,
\begin{equation}
\frac{\partial \CFP}{\partial x}(t,x) = \frac{F{\left(Q(t,x)\right)}}{F(x)}\frac{\partial \Cdiff}{\partial q}(t,Q(t,x)),
\end{equation}
and
\begin{equation}
\begin{aligned}
\frac{\partial^2 \CFP}{\partial x^2}(t,x) &= \frac{F^2{\left(Q(t,x)\right)}}{F^2(x)} \frac{\partial^2 \Cdiff}{\partial q^2}(t,Q(t,x)) + \frac{F{\left(Q(t,x)\right)}(F^\prime{\left(Q(t,x)\right)}-F^\prime(x))}{F^2(x)} \frac{\partial \Cdiff}{\partial q}(t,Q(t,x)).
\end{aligned}
\end{equation}
We also have
\begin{equation}
\label{eq:der-FC}
\begin{aligned}
\frac{\partial (F\CFP)}{\partial x}(t,x) &= F{\left(x\right)} \frac{\partial \CFP}{\partial x}(t,x) + F^\prime{\left(x\right)} \CFP(t,x) = F{\left(Q(t,x)\right)} \frac{\partial \Cdiff}{\partial q}(t,Q(t,x)) + F^\prime{\left(x\right)} \Cdiff(t,Q(t,x)),
\end{aligned}
\end{equation}
and
\begin{align}
\label{eq:der-DC}
\frac{\partial^2 (D\CFP)}{\partial x^2}(t,x) &= D{\left(x\right)}\frac{\partial^2\CFP}{\partial x^2}(t,x) + 2 D^\prime{\left(x\right)}\frac{\partial\CFP}{\partial x}(t,x) + D^{\prime\prime}{\left(x\right)} \CFP(t,x) \\
& = D(Q(t,x))\frac{\partial^2\Cdiff}{\partial q^2}(t,Q(t,x)) + {\left(D(x) \frac{F^2{\left(Q(t,x)\right)}}{F^2{\left(x\right)}} - D(Q(t,x)) \right)}\frac{\partial^2\Cdiff}{\partial q^2}(t,Q(t,x))   \\
& \quad + {\left( \frac{D{\left(x\right)}}{F(x)}{\left(F^\prime{\left(Q(t,x)\right)} - F^\prime{\left(x\right)}\right)}  + 2 D^\prime{\left(x\right)} \right)} \frac{F{\left(Q(t,x)\right)}}{F(x)}\frac{\partial \Cdiff}{\partial q}(t,Q(t,x)) + D^{\prime\prime}{\left(x\right)} \Cdiff(t,Q(t,x)).
\end{align}
Taking the time derivative of $\CFP$, we also have
\begin{equation}
\frac{\partial \CFP}{\partial t}(t,x) =  \frac{\partial \Cdiff}{\partial t}(t,Q(t,x)) - F{\left(Q(t,x)\right)}\frac{\partial \Cdiff}{\partial q}(t,Q(t,x)).
\end{equation}
Therefore, it holds
\begin{equation}
\frac{\partial \Cdiff}{\partial t}(t,Q(t,x)) =  \frac{\partial \CFP}{\partial t}(t,x) + \frac{\partial (F\CFP)}{\partial x}(t,x) - F^\prime(x)\Cdiff(t,Q(t,x)),
\end{equation}
and
\begin{equation}
\begin{aligned}
& D(Q(t,x))\frac{\partial^2\Cdiff}{\partial q^2}(t,Q(t,x))  = \frac{\partial^2 (D\CFP)}{\partial x^2}(t,x) - {\left(D(x) \frac{F^2{\left(Q(t,x)\right)}}{F^2{\left(x\right)}} - D(Q(t,x)) \right)}\frac{\partial^2\Cdiff}{\partial q^2}(t,Q(t,x))   \\
& \qquad - {\left( \frac{D{\left(x\right)}}{F(x)}{\left(F^\prime{\left(Q(t,x)\right)} - F^\prime{\left(x\right)}\right)}  + 2 D^\prime{\left(x\right)} \right)} \frac{F{\left(Q(t,x)\right)}}{F(x)}\frac{\partial \Cdiff}{\partial q}(t,Q(t,x)) - D^{\prime\prime}{\left(x\right)} \Cdiff(t,Q(t,x)).
\end{aligned}
\end{equation}
Combining the last two equations and using~\eqref{problem:Cauchy-diffusion} gives us that $\CFP$ is solution of
\begin{equation}
\label{eq:Cdiff2CFP}
\frac{\partial \CFP}{\partial t}(t,x) = -\frac{\partial (F\CFP)}{\partial x}(t,x) + \frac{1}{2}\frac{\partial^2 (D\CFP)}{\partial x^2}(t,x) - R_{\Cdiff}(t,x),
\end{equation}
where
\begin{equation}
\begin{aligned}
R_{\Cdiff}(t,x) &= \frac{1}{2} {\left(D(x)\frac{F^2(Q(t,x))}{F^2(x)}-D(Q(t,x))\right)}\frac{\partial^2\Cdiff}{\partial q^2}(t,Q(t,x)) + {\left(\frac{1}{2}D^{\prime\prime}(x) - F^\prime(x)\right)}\Cdiff(t,Q(t,x)) \\
& \qquad + {\left( \frac{D{\left(x\right)}}{2F(x)}{\left(F^\prime{\left(Q(t,x)\right)} - F^\prime{\left(x\right)}\right)}  +  D^\prime{\left(x\right)} \right)} \frac{F{\left(Q(t,x)\right)}}{F(x)}\frac{\partial \Cdiff}{\partial q}(t,Q(t,x)).
\end{aligned}
\end{equation}
Using Lemma~\ref{lemm:R_F} and Assumption~\ref{hypo:alpha-beta-prime}, we have, as $x\to +\infty$ and for $t \le G(x)$,
\begin{align}
D(x)\frac{F^2(Q(t,x))}{F^2(x)}-D(Q(t,x)) &= D(x){\left[{\left(\frac{F^2(Q(t,x))}{F^2(x)}-1\right)} - {\left( \frac{D(Q(t,x))}{D(x)} - 1\right)}\right]} = \mathrm{O}{\left(x^{2\gamma-1}\right)},
\end{align}
and
\begin{equation}
 {\left( \frac{D{\left(x\right)}}{F(x)}{\left(F^\prime{\left(Q(t,x)\right)} - F^\prime{\left(x\right)}\right)}  + 2 D^\prime{\left(x\right)} \right)} \frac{F{\left(Q(t,x)\right)}}{F(x)} = \mathrm{O}{\left( x^{\gamma-1} \right)},
\end{equation}
as well as $D^{\prime\prime}(x) - F^\prime(x) = \mathrm{O}{\left( x^{\gamma-1} \right)}$. Then, in view of Theorem~\ref{thm:regularity}, there exists a non-negative function $K \in \mathcal{C}^0(\mathbb{R}_+)$, such that, for all $(t,x) \in Z_M$, it holds ${\left| R_{\Cdiff}(t,x) \right|} \le K(t) x^{\gamma-1}$, which concludes the proof.

\subsubsection[Proof]{Proof of Theorem~\ref{thm:Diff2CD}}

Let us first remark that, for all $n \ge n_0$, $\Ccluster_n$ is well defined and smooth by Theorem~\ref{thm:regularity}. Then, for all $(t,x) \in Z_M$, in view of Lemma~\ref{lemm:R_F} and~\eqref{eq:derivatives-Q}, it holds
\begin{equation}
\forall\, (t,n) \in \mathcal{Z}_M, \qquad Q(t,n+1) - Q(t,n) = 1 + R_1(t,n),
\end{equation}
where there is a non-negative function $K_1 \in \mathcal{C}^0(\mathbb{R}_+)$ such that ${\left|R_1(t,n)\right|} \le K_1(t) n^{\gamma-1}$. Next, using once again a Taylor expansion, for all $(t,n) \in  \mathcal{Z}_M$, there exists $\kappa_n \in\ ]Q(t,n),Q(t,n+1)[$ such that
\begin{equation}
\begin{aligned}
\Ccluster_{n+1}(t) - \Ccluster_{n}(t) &= \Cdiff{\left(t,Q(t,n) + 1 + R_1(t,n)\right)} - \Cdiff(t,Q(t,n)) \\
& = {\left(1 + R_1(t,n)\right)}\frac{\partial\Cdiff}{\partial q}{\left(t,Q(t,n)\right)} + \frac{1}{2}{\left(1 + R_1(t,n)\right)}^2\frac{\partial^2 \Cdiff}{\partial q^2}{\left(t,Q(t,n)\right)} +\frac{1}{6}{\left(1 + R_1(t,n)\right)}^3\frac{\partial^3 \Cdiff}{\partial q^3}{\left(t,\kappa_n\right)}.
\end{aligned} 
\end{equation}
Using Theorem~\ref{thm:regularity} and Lemma~\ref{lemm:R_F}, we have
\begin{equation}
\Ccluster_{n+1}(t) - \Ccluster_{n}(t) =  \frac{\partial\Cdiff}{\partial q}{\left(t,Q(t,n)\right)} + \frac{1}{2} \frac{\partial^2 \Cdiff}{\partial q^2}{\left(t,Q(t,n)\right)} + R_2(t,n),
\end{equation}
where there is a non-negative function $K_2 \in \mathcal{C}^0(\mathbb{R}_+)$ such that ${\left|R_2(t,n)\right|} \le K_2(t) n^{-3\gamma/2}$, the dominant terms of the remainder being $R_1 \partial\Cdiff/\partial q$ and $\partial^3\Cdiff/\partial q^3$. Using once again the assumptions on $\alpha$ and $\beta$ (see Assumption~\ref{hypo:alpha-beta-prime}), in particular that $\alpha^\prime(n) = \mathrm{O}(n^{\gamma-1})$ and the fact that $\Cdiff$ is bounded (see Theorem~\ref{thm:regularity}), there is $\xi_n\in [n,n+1]$ such that
\begin{equation}
\begin{aligned}
\alpha_{n+1} \Ccluster_{n+1}(t) - \alpha_n \Ccluster_n(t) &= {\left(\alpha(n) + \alpha^\prime(\xi_n)\right)} \Ccluster_{n+1}(t) - \alpha_n \Ccluster_n(t) \\
& = \alpha(n) \frac{\partial\Cdiff}{\partial q}{\left(t,Q(t,n)\right)} + \frac{1}{2}\alpha(n) \frac{\partial^2 \Cdiff}{\partial q^2}{\left(t,Q(t,n)\right)} + R^\alpha_3(t,n),
\end{aligned}
\end{equation}
where there is a non-negative function $K^\alpha_3 \in \mathcal{C}^0(\mathbb{R}_+)$ such that ${\left|R^\alpha_3(t,n)\right|} \le K_3^\alpha(t)n^{-\gamma/2}$. Similarly,
\begin{equation}
\begin{aligned}
\beta_{n-1} \Ccluster_{n-1}(t) - \beta_n \Ccluster_n(t) &= - \beta(n)  \frac{\partial\Cdiff}{\partial q}{\left(t,Q(t,n)\right)} + \frac{1}{2}\beta(n)\frac{\partial^2 \Cdiff}{\partial q^2}{\left(t,Q(t,n)\right)} + R^\beta_3(t,n),
\end{aligned}
\end{equation}
where there is a non-negative function $K_3^\beta \in \mathcal{C}^0(\mathbb{R}_+)$ such that ${\left|R^\beta_3(t,n)\right|} \le K_3^\beta(t)n^{-\gamma/2}$. Combining these results, and noting that $F(n) = F(Q(t,n)) + F(n)R_{F,1}(t,n)$ (where $R_{F,1}$ is defined in Lemma~\ref{lemm:R_F}) and a similar equality for $D(n)$, we finally obtain
\begin{equation}
\begin{aligned}
\beta_{n-1}\Ccluster_{n-1} C_1 - (\beta_n C_1 + \alpha_n) \Ccluster_n + \alpha_{n+1} \Ccluster_{n+1} = &- F(Q(t,n)) \frac{\partial\Cdiff}{\partial q}{\left(t,Q(t,n)\right)} \\
&+ \frac{1}{2} D(Q(t,n)) \frac{\partial^2 \Cdiff}{\partial q^2}{\left(t,Q(t,n)\right)} + R_3(t,n),
\end{aligned} 
\end{equation}
where there is a non-negative function $K_3 \in \mathcal{C}^0(\mathbb{R}_+)$ such that ${\left|R_3(t,n)\right|} \le K_3(t)n^{-\gamma/2}$. Since $\Cdiff$ is solution of~\eqref{problem:Cauchy-diffusion} and 
\begin{equation}
\begin{aligned}
\frac{d\Ccluster_n}{dt}(t) &= \frac{\partial \Cdiff}{\partial t}{\left(t,Q(t,n)\right)} - F{\left(Q(t,n)\right)} \frac{\partial\Cdiff}{\partial q}{\left(t,Q(t,n)\right)} = - F(Q(t,n)) \frac{\partial\Cdiff}{\partial q}{\left(t,Q(t,n)\right)} + \frac{1}{2} D(Q(t,n)) \frac{\partial^2 \Cdiff}{\partial q^2}{\left(t,Q(t,n)\right)}, \\
\end{aligned}
\end{equation}
it follows that $\Ccluster_n$ satisfies
\begin{equation}
\frac{d\Ccluster_n}{dt} = \beta_{n-1}\Ccluster_{n-1} C_1 - (\beta_n C_1 + \alpha_n) \Ccluster_n + \alpha_{n+1} \Ccluster_{n+1} - R_3(t,n),
\end{equation}
from which the desired conclusion follows.

\section*{Acknowledgements}

The authors would like to thank Benjamin Jourdain for suggesting to use of the Lamperti transform, and Charles-\'{E}douard Br\'{e}hier for fruitful discussions on tangent processes; as well as Bob Kohn for his insights on Becker--D\"oring dynamics. We also would like to thank Fr\'{e}d\'{e}ric Legoll for his help and interest in the problem. We are finally grateful to Thomas Jourdan, Manuel Ath\`{e}nes and Gilles Adjanor for introducing us to this problem.

\bibliographystyle{plain}
\bibliography{biblio}

\end{document}